\newcommand{\bm}[1]{\boldsymbol{#1}}
\newcommand{\norm}[1]{\left\Vert#1\right\Vert}
\newcommand{\abs}[1]{\left\vert#1\right\vert}
\newcommand{\ls}{\lesssim}
\newcommand{\eq}[1]{\begin{align}#1\end{align}}
\newcommand{\eqn}[1]{\begin{align*}#1\end{align*}}
\newcommand{\db}{\displaybreak[0]}
\newcommand{\LL}{\sqrt{L_1L_2}}
\newcommand{\al}{\alpha}
\newcommand{\be}{\beta}
\newcommand{\de}{\delta}
\newcommand{\ep}{\varepsilon}
\newcommand{\Ga}{\Gamma}
\newcommand{\na}{\nabla}
\newcommand{\Om}{\Omega}
\newcommand{\si}{\sigma}
\newcommand{\ze}{\zeta}
\title{An Adaptive Finite Element DtN Method for Maxwell's Equations in
Biperiodic Structures}
\author{Xue Jiang\thanks{School of Science, Beijing University of Posts and
Telecommunications, Beijing 100876, China. The research was supported in part
by NSFC grants 11771057, 11401040, and 11671052. ({\tt jxue@lsec.cc.ac.cn}).}
\and Peijun Li\thanks{Department of Mathematics, Purdue University, West Lafayette,
Indiana 47907, USA. ({\tt lipeijun@math.purdue.edu}).}
\and Junliang Lv\thanks{School of Mathematics, Jilin University, Changchun
130012, China. The research was partially supported by NSFC grant
11301214 and by Science Challenge Project grant TZ2016002.
({\tt lvjl@jlu.edu.cn}).} \and Zhoufeng Wang\thanks{School of Mathematics and
Statistics, Henan University of Science and Technology, Henan, 471023, China.
({\tt zfwang801003@126.com}).} \and Haijun Wu\thanks{Department of Mathematics,
Nanjing University, Jiangsu 210093, China. The research was supported in part by
NSFC grants 11525103, 91630309, and 11621101. ({\tt hjw@nju.edu.cn}).} \and
Weiying Zheng\thanks{NCMIS, LSEC, ICMSEC, Academy of Mathematics and System
Sciences, Chinese Academy of Sciences, Beijing, 100190, China. The research was
supported in part by NSFC grants 11171334 and 91430215, by the Funds for
Creative Research Groups of China (Grant No. 11021101), and by National 863
Project of China under the grant 2012AA01A309.}}
\begin{document}

\maketitle

\begin{abstract}
Consider the diffraction of an electromagnetic plane wave by a biperiodic
structure where the wave propagation is governed by the three-dimensional
Maxwell equations. Based on transparent boundary condition, the grating problem
is formulated into a boundary value problem in a bounded domain. Using a duality
argument technique, we derive an a posteriori error estimate for the finite
element method with the truncation of the nonlocal Dirichlet-to-Neumann (DtN)
boundary operator. The a posteriori error consists of both the finite element
approximation error and the truncation error of boundary operator which decays
exponentially with respect to the truncation parameter. An adaptive finite
element algorithm is developed with error controlled by the a posterior error
estimate, which determines the truncation parameter through the truncation error
and adjusts the mesh through the finite element approximation error. Numerical
experiments are presented to demonstrate the competitive behavior of the
proposed adaptive method.
\end{abstract}

\begin{keywords}
Maxwell's equations, biperiodic grating problem, adaptive finite element
method, transparent boundary condition, a posteriori error estimate
\end{keywords}

\begin{AMS}
65M30, 78A45, 35Q60
\end{AMS}

\pagestyle{myheadings}
\thispagestyle{plain}
\markboth{X. Jiang, P. Li, J. Lv, Z. Wang, H. Wu, and W. Zheng}{Adaptive
DtN Method for Maxwell's Equations}

\section{Introduction}

Consider the diffraction of a time-harmonic electromagnetic plane wave by a
biperiodic structure in $\mathbb R^3$. A biperiodic structure is also called a
doubly periodic structure, crossed grating, or two-dimensional grating in
optics. Scattering theory in periodic structures have many important
applications in micro-optics, which include the design and fabrication of
optical elements such as corrective lenses, antireflective interfaces, beam
splitters, and sensors. The basic electromagnetic theory of gratings has been
studied extensively since Rayleigh's time \cite{R07}. Recent advance has been
greatly accelerated due to the development of new approaches and numerical
methods including differential methods, integral methods, analytical
continuation, variational methods, and others. An introduction to
grating problems can be found in Petit \cite{P80}. We refer to \cite{BDC95} and
the references cited therein for the mathematical studies on well-posedness of
the diffraction grating problems. Numerical methods can be found
in \cite{B95, B97, BCL14, BCW05, BR93a, BR93b, DF92, HNS12, JL17, NS91, WL09}
for various approaches including integral equation method, finite element
method, and boundary perturbation method for solving the direct and inverse
diffraction grating problems. A comprehensive review can be found in
\cite{BCM01} on diffractive optics technology and its mathematical modeling as
well as computational methods. One may consult monographs \cite{CK83, CK98,
J93, M03, N01} for extensive accounts of integral equation methods and finite
element methods for direct and inverse electromagnetic scattering problems in
general structures.

The scattering problems are usually imposed in open domains, which need to be
truncated into bounded computational domains when applying numerical methods
such as finite element method or finite difference method. Therefore,
appropriate boundary conditions are required on the boundaries of the truncated
domains in order to avoid artificial wave reflection. These boundary conditions
are called absorbing boundary conditions (ABCs) \cite{EM77, BT80},
non-reflecting boundary conditions \cite{GK95, H99}, or transparent boundary
conditions (TBCs) \cite{GK04}. They are still the subject matter of much ongoing
research. Another effective truncation strategy is the perfectly matched layer
(PML) technique, which was first introduced by Berenger in \cite{B94}. In
computational wave propagation, it has become an active research area on
constructions and analysis of PML absorbing layers for various scattering
problems ever since then \cite{BP08, CC08, CM98, LWZ11, TC01, TY98}. The basic
idea of the PML technique is to add an artificial layer of medium to surround
the physical domain. Such medium is generally designed to make the outgoing
waves to decay exponentially, so that a homogeneous Dirichlet boundary condition
can be imposed on the exterior boundary of the layer. In order to effectively
choose the parameters of medium and the thickness of PML absorbing layer, the
adaptive finite element methods were analyzed for the diffraction grating
problems \cite{BCW05, BLW10, CW03}. The adaptive finite element PML method has
also been applied to solve the obstacle scattering problems in \cite{BW05,
CL05}.

Recently, combined with the TBC truncation approach, adaptive finite element
methods were developed to solve the two-dimensional acoustic obstacle scattering
problems \cite{JLLZ17, JLZ13} and the one-dimensional diffraction grating
problem \cite{WBLLW15}. Unlike the PML technique, the finite element TBC method
does not require extra an artificial layer of domain to surround the physical
domain. Consequently, an obvious advantage of the TBC method is that the size of
the computational domain can be remarkably reduced, since the artificial
boundary can be put as close as possible to surround the obstacle due to the
exactness of transparent boundary condition. This merit may decrease the scale
of the resulting linear system of algebraic equations. It should be pointed out
that the TBC is defined by a nonlocal Dirichlet-to-Neumann (DtN) operator, which
is given by an infinite Fourier series. In practical computation, one has to
choose a positive integer $N$ to truncate the infinite series into a sum of
finite sequence. In \cite{JLLZ17, WBLLW15}, the authors derived \emph{a
posteriori} error estimates, which are composed of the finite element
discretization error and the truncation error of the DtN operator.

The goal of this paper is to extend the finite element DtN method proposed in
\cite{JLLZ17, WBLLW15} to the two-dimensional diffraction grating
problem. The extension is non-trivial since the techniques differ
greatly from \cite{JLLZ17, WBLLW15}. We need to consider more complicated
three-dimensional Maxwell equations instead of the two-dimensional Helmholtz
equation. In this work, we derive an \emph{a posteriori} error estimate which
not only takes into account of the finite element discretization error but also
the truncation error of the boundary operator. In \cite{HNPX11}, it was shown
that the convergence could be arbitrarily slow for the truncated DtN mapping to
the original DtN mapping in its operator norm for the obstacle scattering
problem. The same issue arises for the diffraction grating problem. To overcome
this difficulty, a new duality argument is introduced for the \emph{a
posteriori} error estimate between the solution of the diffraction problem and
the finite element solution. The estimate is used to design the adaptive finite
element algorithm to choose elements for refinement and to determine the
truncation parameter $N$ in the Fourier series. We show that the truncation
error decays exponentially with respect to $N$. The numerical experiments
demonstrate a comparable behavior to the adaptive PML method developed in
\cite{BLW10}, and show much more competitive efficiency by adaptively refining
the mesh as compared with uniformly refining the mesh. This work provides a
viable alternative to the adaptive finite element method with the PML technique
for solving the diffraction grating problem. The method is expected to be
applicable to solve many other wave propagation problems in open domains and
even more general model problems where TBCs are available but the PML may not be
applied.

The paper is organized as follows. In Section 2, we introduce the model
problem of the diffraction of an electromagnetic plane wave by a bi-periodic
structure and its weak formulation by using the transparent boundary condition.
The finite element discretization with truncated DtN operator is presented in
Section 3. Section 4 is the main body of the work and is devoted to the a
posteriori error estimate by using a duality argument. In Section 5, we present
some numerical experiments to demonstrate the competitive behavior of the
proposed adaptive DtN method. The paper is concluded with some general remarks
in Section 6.

\section{Problem formulation}

In this section, we introduce the model problem of the diffraction of an
electromagnetic plane wave by a biperiodic structure, and its variational
formulation by using the transparent boundary condition.

\subsection{Maxwell's equations}

The electromagnetic fields in the whole space are governed by the
time-harmonic (time-dependence $e^{-{\rm i}\omega t}$) Maxwell's equations:
\begin{equation}\label{me}
\nabla\times \bm{E}-{\rm i}\omega\mu\bm{H}=0,\quad
\nabla\times\bm{H}+{\rm i}\omega\varepsilon\bm{E}=0,
\end{equation}
where $\bm{E}$ and $\bm{H}$ are the electric field and the magnetic field,
respectively. The physical structure is described by the dielectric permittivity
$\varepsilon(\bm{x})\in L^{\infty}(\mathbb{R}^3)$ and magnetic
permeability $\mu(\bm{x})\in L^{\infty}(\mathbb{R}^3)$,
$\bm{x}=(x_1,x_2,x_3)^\top$. The dielectric permittivity $\varepsilon$ and the
magnetic permeability $\mu$ are assumed to be periodic in the $x_1$ and $x_2$
directions with periods $L_1$ and $L_2$, respectively, i.e.,
\begin{align*}
\varepsilon(x_1+n_1L_1, x_2+n_2L_2, x_3)&=\varepsilon(x_1,x_2,x_3),\\
\mu(x_1+n_1L_1, x_2+n_2L_2, x_3)&=\mu(x_1,x_2,x_3),
\end{align*}
for $x_j\in\mathbb{R}$, where $n_1$, $n_2$ are integers. Throughout we assume
that $\text{Im}\varepsilon\geq 0$, $\text{Re}\varepsilon>0$, and $\mu>0$.
The problem
geometry is shown in Figure \ref{pg}. Let
\[
\Omega=\{\bm{x}\in\mathbb R^3: 0<x_1<L_1,\, 0<x_2<L_2,\, b_2<x_3<b_1\},
\]
where $b_j, j=1, 2$ are  constants. Denote by $\Omega_1=\{\bm{x}\in\mathbb R^3:
0<x_1<L_1,\, 0<x_2<L_2,\, x_3>b_1\}$ and $\Omega_2=\{\bm{x}\in\mathbb R^3:
0<x_1<L_1,\, 0<x_2<L_2,\, x_3<b_2\}$ the unbounded domains above and below
$\Omega$, respectively. Let $\Gamma_j=\{\bm{x}\in\mathbb R^3: 0<x_1<L_1,\,
0<x_2<L_2,\, x_3=b_j\}$ and $\Gamma_j'=\{\bm{x}\in\mathbb R^3: 0<x_1<L_1,\,
0<x_2<L_2,\, x_3=b_j'\}$,
where $b_j', j=1, 2$ are constants satisfying $b_2<b_2'<b_1'<b_1$. Let
$d_j=|b_j-b_j'|, j=1,2$. Define $\Omega'=\{\bm{x}\in\mathbb R^3:
0<x_1<L_1,\, 0<x_2<L_2,\, b_2'<x_3<b_1'\}$. Denote by
$\Omega_1'=\{\bm{x}\in\mathbb R^3:
0<x_1<L_1,\, 0<x_2<L_2,\, x_3\geq b_1'\}$ and $\Omega_2'=\{\bm{x}\in\mathbb
R^3: 0<x_1<L_1,\, 0<x_2<L_2,\, x_3\leq b_2'\}$ the unbounded domains above and
below $\Omega'$, respectively. The medium is assumed to be homogeneous away
from $\Omega'$, i.e., there exist constants $\varepsilon_j$ and $\mu_j$ such
that
\[
\varepsilon(\bm{x})=\varepsilon_j,\quad \mu(\bm{x})=\mu_j,
\quad\bm{x}\in\Omega_j',\quad j=1, 2.
\]
It is further assumed that $\varepsilon_1>0, \mu_j>0$, $j=1,2$ but
$\varepsilon_2$ may be complex for the substrate material in $\Omega_2'$.

\begin{figure}
\centering
\includegraphics[width=0.45\textwidth]{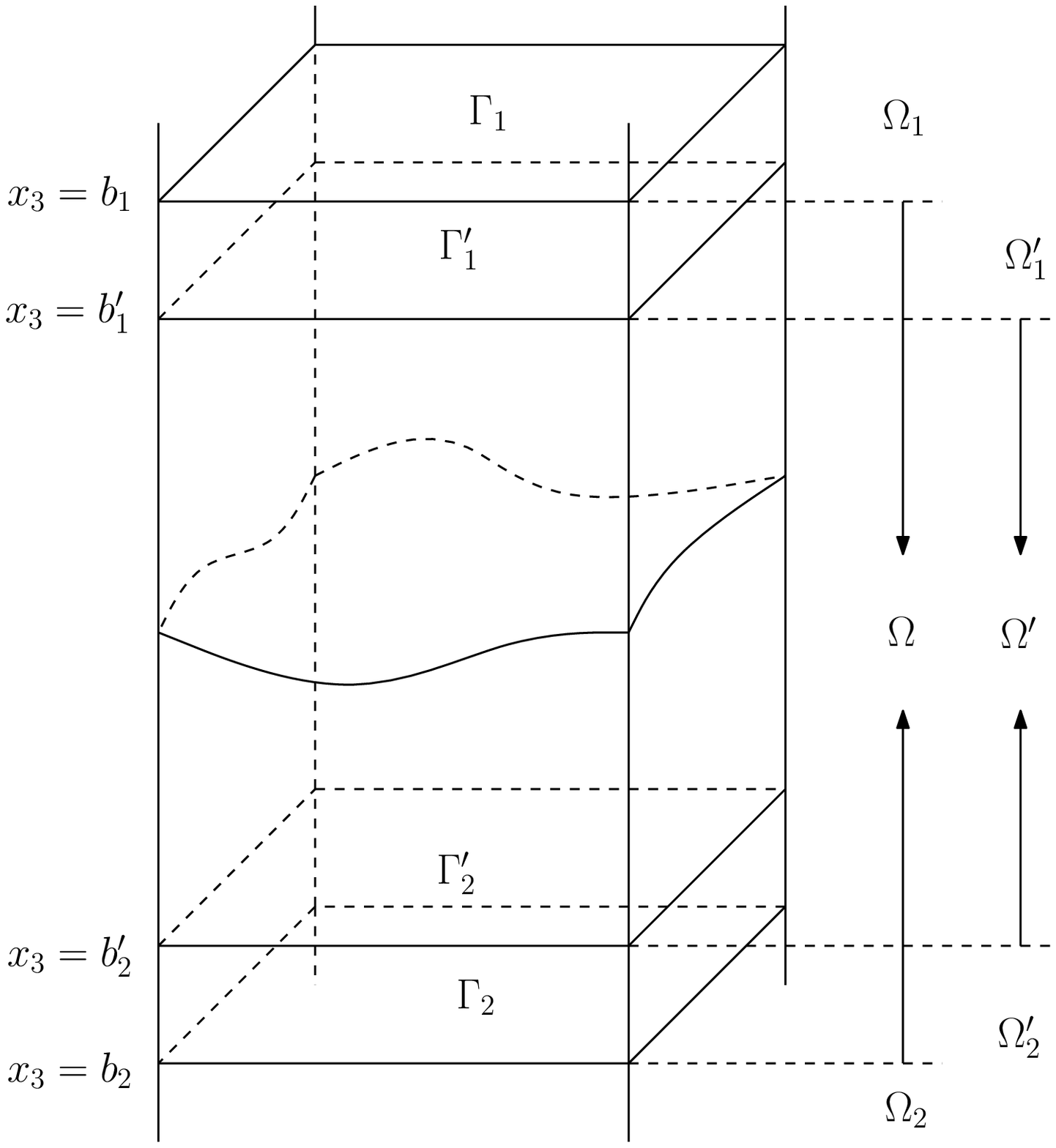}
\caption{Problem geometry of the diffraction grating in a biperiodic
structure.}
\label{pg}
\end{figure}

Let $(\bm{E}^{\rm inc}, \bm{H}^{\rm inc})$ be the incoming electromagnetic plane
waves that are incident on the grating surface from the top, where
\[
\bm{E}^{\rm inc}=\bm{p}e^{{\rm i}\bm{q}\cdot
\bm{x}},\quad \bm{H}^{\rm inc}=\bm{s}e^{{\rm i}\bm{q}\cdot \bm{x}},\quad
  \bm{s}=\frac{\bm{q}\times \bm{p}}{\omega\mu_1},\quad \bm{p}\cdot \bm{q}=0.
\]
Here
$\bm{q}=(\alpha_1, \alpha_2, -\beta)^\top=\omega\sqrt{\varepsilon_1\mu_1}
(\sin\theta_1\cos\theta_2, \sin\theta_1\sin\theta_2,
-\cos\theta_1)^\top$ and $\theta_1, \theta_2$ are incident angles satisfying
$0\leq\theta_1<\pi/2, 0\leq\theta_2<2\pi$.

Motivated by the uniqueness, we are interested in quasi-periodic solutions,
i.e., the phase shifted electromagnetic fields $(\bm{E}(\bm{x}),
\bm{H}(\bm{x}))e^{-{\rm i}(\alpha_1x_1+\alpha_2x_2)}$ are periodic functions in
$x_1$ and $x_2$ with periods $L_1$ and $L_2$, respectively.

Denote by $L^2(\Omega)$ the space of complex square integrable functions in
$\Omega$. Let
\[
H(\text{curl}, \Omega)=\{\bm{\varphi}\in L^2(\Omega)^3: \nabla\times
\bm{\varphi}\in L^2(\Omega)^3\}
\]
with the norm
\[
\|\bm{\varphi}\|_{H(\text{curl},
\Omega)}=\left(\|\bm{\varphi}\|_{L^2(\Omega)^3}^2
+\|\nabla\times\bm{\varphi}\|_{L^2(\Omega)^3}^2\right)^{1/2}.
\]
Define the periodic functional space
\eqn{
H_{\text{per}}(\text{curl}, \Omega)=\{\bm{\varphi}\in H(\text{curl},\Omega):\
&\bm{\varphi}(0,x_2,x_3)=\bm{\varphi}(L_1,x_2,x_3),\\
&\bm{\varphi}(x_1,0,
x_3)=\bm{\varphi}(x_1,L_2, x_3)\}.
}
Let
\[
H_{\text{qper}}(\text{curl}, \Omega)=\{\bm{\varphi}: \bm{\varphi}
e^{-{\rm i}(\alpha_1x_1+\alpha_2x_2)}\in H_{\text{per}}(\text{curl},
\Omega)\}.
\]
For any smooth vector field $\bm{\psi}=(\psi_1,\psi_2,\psi_3)^\top$,
denote by $\bm{\psi}_{\Gamma_j}=(\psi_1(x_1,x_2,b_j),
\psi_2(x_1,$ $x_2,b_j), 0)^\top$ its tangential component of $\bm\psi$
on the surface $\Gamma_j, j=1, 2$.

To describe the capacity operators and transparent boundary condition in the
formulation of the boundary value problem, we introduce some trace functional
spaces. Denote by $H^{-1/2}(\Gamma_j)$ the standard trace Sobolev space. Define
the quasi-biperiodic trace functional space as
\[
H^{-1/2}_{\text{qper}}(\Gamma_j)=\{\phi\in H^{-1/2}(\Gamma_j): \phi(\bm{x})
e^{-{\rm i}(\alpha_1x_1+\alpha_2x_2)}~\text{is biperiodic in $x_1$ and $x_2$}\}.
\]
Let $n=(n_1, n_2)^\top\in\mathbb{Z}^2$ and
\[
\alpha_{jn}=\alpha_j+2\pi n_j/L_j,\quad j=1,2.
\]
For any $\phi\in H^{-1/2}_{\text{qper}}(\Gamma_j)$, it has the following
Fourier series expansion
\[
\phi(x_1,x_2,b_j)=\sum_{n\in\mathbb{Z}^2}\phi_n(b_j)
e^{{\rm i}(\alpha_{1n}x_1+\alpha_{2n}x_2)}.
\]
The norm can be characterized by
\[
\|\phi\|_{H^{-1/2}_{\text{qper}}(\Gamma_j)}^2=L_1L_2\sum_{n\in\mathbb{Z}^2}
(1+|\alpha_n|^2)^{-1/2}|\phi_n(b_j)|^2.
\]
where $\alpha_n=(\alpha_{1 n}, \alpha_{2 n})^\top$.

For any vector field $\bm{\varphi}=(\varphi_1,\varphi_2,\varphi_3)^\top$
and scalar field $\psi$, denote by
\begin{align*}
\text{curl}_{\Gamma_j}\bm{\varphi}&=\partial_{x_1}\varphi_2-\partial_{x_2}
\varphi_1,\\
\nabla_{\Gamma_j}\psi&=(\partial_{x_1}\psi,\partial_{x_2}\psi,0)^\top,\\
\text{div}_{\Gamma_j}\bm{\varphi}&=\partial_{x_1}\varphi_1+\partial_{x_2}
\varphi_2,
\end{align*}
the surface scalar curl, the surface gradient, and the surface divergence on
$\Gamma_j, j=1, 2$, respectively. Introduce the following tangential
functional spaces:
\begin{align*}
  &TL^2(\Gamma_j)=\{\bm{\varphi}\in L^2(\Gamma_j)^3,\,\varphi_3=0\},\\
&TH_{\text{qper}}^{-1/2}(\text{curl},\Gamma_j)=\{\bm{\varphi}\in H^{-1/2}
(\Gamma_j)^3: \text{curl}_{\Gamma_j}\bm{\varphi}\in
H^{-1/2}(\Gamma_j),\,\varphi_3=0,\\
  &\hspace{3.8cm}\varphi_1, \varphi_2 \,\text{are biperiodic functions}\}.
\end{align*}
For any quasi-periodic tangential vector field $\bm{\varphi}$, it has the
Fourier series expansion
\begin{equation*}
\bm{\varphi}=\sum_{n\in\mathbb{Z}^2}(\varphi_{1n}, \varphi_{2n}, 0)^\top
e^{{\rm i}(\alpha_{1n}x_1+\alpha_{2n}x_2)}.
\end{equation*}
The $TL^2(\Gamma_j)$ norm of $\bm{\varphi}$ may be represented as
\begin{equation*}
  \|\bm{\varphi}\|^2_{TL^2(\Gamma_j)}=L_1L_2\sum\limits_{n\in Z^2}
  (|\varphi_{1n}|^2+|\varphi_{2n}|^2).
\end{equation*}
Using the Fourier coefficients, we may characterize the norm on the
space $TH_{\text{qper}}^{-1/2}(\text{curl},$ $\Gamma_j)$:
\begin{align}\label{normcurlgamma}
\|\bm{\varphi}\|^2_{TH_{\text{qper}}^{-1/2}(\text{curl},
\Gamma_j)}=L_1L_2\sum\limits_{n\in Z^2}
(1+|\alpha_{n}|^2)^{-1/2}\big[|\varphi_{1n}(b_j)|^2+|\varphi_{2n}
(b_j)|^2\notag\\
+|\alpha_{1n}\varphi_{2n}(b_j)-\alpha_{2n}\varphi_{1n}(b_j)|^2\big].
\end{align}

\subsection{Variational formulation}

In this section, we introduce the transparent boundary condition and
variational formulation for the diffraction grating problem. The details can be
found in \cite{BLW10} on the derivation of the TBC.

It follows from the radiation condition that the solution $(\bm{E}, \bm{H})$ of
the diffraction grating problem is composed of bounded outgoing plane waves in
$\Omega_1$ and $\Omega_2$, plus the incident wave $(\bm{E}^{\rm inc},
\bm{H}^{\rm inc})$ in $\Omega_1$. For convenience, we define
\[
\bm{E}^{\rm inc}_1=\bm{E}^{\rm inc},\quad \bm{H}^{\rm inc}_1=\bm{H}^{\rm
inc},\quad \bm{E}^{\rm inc}_2=\bm{H}^{\rm inc}_2=0.
\]
In virtue of the quasi-periodicity of $(\bm{E}, \bm{H})$, we get from the
Rayleigh expansion that
\[
\bm{E}-\bm{E}^{\rm inc}_j=\sum_{n\in\mathbb{Z}^2}\bm{p}_{jn}e^{{\rm
i}\bm{q}_{jn}\cdot\bm{x}},\quad
\bm{H}-\bm{H}^{\rm inc}_j=\sum_{n\in\mathbb{Z}^2}\bm{s}_{jn} e^{{\rm
i}\bm{q}_{jn}\cdot\bm{x}},\quad \bm{x}\in\Omega_j,
\]
where
\[
 \bm{s}_{jn}=\frac{1}{\omega\mu_j}\bm{q}_{jn}\times\bm{p}_{jn},\quad
\bm{p}_{jn}\cdot\bm{q}_{jn}=0,\quad \bm{q}_{jn}=(\alpha_{1n}, \alpha_{2n},
(-1)^{j-1}\beta_{jn})^\top.
\]
Here
\begin{equation}\label{beta}
 \beta_{jn}=(\kappa_j^2-|\alpha_n|^2)^{1/2},\quad {\rm Im}\beta_{jn}\geq
0,\quad \kappa_j^2=\omega^2\varepsilon_j\mu_j.
\end{equation}
We exclude possible resonances by assuming that
$\kappa_j^2\neq|\alpha_n|^2, n\in\mathbb{Z}^2, j=1,2.$

It follows from Rayleigh's expansions of $(\bm{E}, \bm{H})$ in $\Omega_j$ that
the transparent boundary conditions hold:
\begin{align*}
(\bm{H}-\bm{H}^{\rm inc})\times\bm{\nu}_1&=\mathscr{T}_1(\bm{E}-\bm{E}^{\rm
inc})_{\Gamma_1}, \quad \text{on}\ \Gamma_1,\\
\bm{H}\times\bm{\nu}_2&=\mathscr{T}_2\bm{E}_{\Gamma_2}, \quad \text{on}\
\Gamma_2,
\end{align*}
where $\bm{\nu}_j$ is the unit outward normal vector on $\Gamma_j$, i.e.,
$\bm{\nu}_j=(0,0,(-1)^{j-1})^\top$, and the capacity operator $\mathscr{T}_j$ is
defined as follows: for any tangential vector field $\bm{\varphi}\in
TH_{\text{qper}}^{-1/2}(\text{curl},\Gamma_j)$ which has Fourier  series
expansion
\[
\bm{\varphi}=\sum_{n\in\mathbb{Z}^2}(\varphi_{1n}^{(j)},\varphi_{2n}^{(j)},
0)^\top e^{{\rm i}(\alpha_{1n}x_1+\alpha_{2n}x_2)},
\]
we let
\begin{equation}\label{tbc}
\mathscr{T}_j\bm{\varphi}=\sum\limits_{n\in\mathbb{Z}^2}(r_{1n}^{(j)},
r_{2n}^{(j)} , 0)^\top e^{{\rm i}(\alpha_{1n}x_1+\alpha_{2n}x_2)},
\end{equation}
where
\begin{align*}
r_{1n}^{(j)}&=\frac{1}{\omega\mu_j\beta_{jn}}\big[(\kappa_j^2-\alpha_{2n}
^2)\varphi_{1n}^{(j)}+\alpha_{1n}\alpha_{2n}\varphi_{2n}^{(j)}\big],\\
r_{2n}^{(j)}&=\frac{1}{\omega\mu_j\beta_{jn}}\big[(\kappa_j^2-\alpha_{1n}^2)
\varphi_{2n}^{(j)}+\alpha_{1n}\alpha_{2n}\varphi_{1n}^{(j)}\big].
\end{align*}

Now we present a variational formulation of the Maxwell system \eqref{me} in
the space $H_{\text{qper}}(\text{curl},\Omega)$. Eliminating the magnetic
field $\bm{H}$ from \eqref{me}, we obtain
\begin{equation}\label{mee}
\nabla\times({\mu}^{-1}\nabla\times\bm{E})-\omega^2\varepsilon\bm{E}=0\quad
\text{in}\ \Omega.
\end{equation}
Multiplying the complex conjugate of a test function $\bm\psi$ in
$H_{\text{qper}}(\text{curl},\Omega)$, integrating over $\Omega$, and using
integration by parts, we arrive at the variational form for the scattering
problem: Find $\bm{E}\in H_{\text{qper}}(\text{curl},\Omega)$ such that
\begin{equation}\label{vp}
  a(\bm{E}, \bm{\psi})=\langle
\bm{f}, \bm{\psi}\rangle_{\Gamma_1},\quad\forall\bm{\psi}\in
  H_{\text{qper}}(\text{curl},\Omega),
\end{equation}
where the sesquilinear form
\begin{equation}\label{sf}
a(\bm{\varphi}, \bm{\psi})=\int_{\Omega}\mu^{-1}\nabla\times\bm{\varphi}
\cdot\nabla\times\bar{\bm \psi}
  -\omega^{2}\int_{\Omega}\varepsilon\varphi\cdot\bar{\bm\psi}
  -{\rm i}\omega\sum_{j=1}^{2}\int_{\Gamma_j}
  \mathscr{T}_j{\bm\varphi}_{\Gamma_j}\cdot\bar{\bm\psi}_{\Gamma_j},
\end{equation}
and the linear functional
\[
\langle\bm{f},\bm{\psi}\rangle_{\Gamma_1}={\rm i}\omega\int_{\Gamma_1}
(\bm{H}^{\rm inc}\times\bm{\nu}_{1}-\mathscr{T}_1 \bm{E}^{\rm
inc}_{\Gamma_1})\cdot\bar{\bm\psi}_{\Gamma_1}=-2{\rm
i}\omega\int_{\Gamma_1}\mathscr{T}_1 \bm{E}^{\rm
inc}_{\Gamma_1}\cdot\bar{\bm\psi}_{\Gamma_1}.
\]
Here we have used the identity
\[
\bm{H}^{\rm inc}\times{\bm\nu}_{1}=-\mathscr{T}_1 \textbf{E}^{\rm
inc}_{\Gamma_1} \quad\text{on}\ \Gamma_1.
\]

We assume that the variational problem \eqref{vp} admits a unique weak
solution in $H_{\text{qper}}(\text{curl},\Omega)$. Then it follows from the
general theory in Babu\v{s}ka and Aziz \cite{BA73} that there
exists a constant $\gamma_1>0$ such that the following inf-sup condition holds:
\begin{equation}\label{isc}
\sup_{0\neq\bm{\psi}\in
H_{\text{qper}}(\text{curl},\Omega)}\frac{|a(\bm{\varphi}, \bm{\psi})|}
{\|\bm{\psi}\|_{H(\text{curl},\Omega)}}
\geq\gamma_1\|\bm{\varphi}\|_{H(\text{curl} , \Omega)},\quad\forall\bm{\varphi}\in
H_{\text{qper}}(\text{curl},\Omega).
\end{equation}

\section{The a posteriori estimate}

In this section, we introduce the finite element approximation and present the a
posteriori error estimate which plays an important role for the adaptive finite
element method.

Let $\mathcal{M}_h$ be a regular tetrahedral mesh of the domain $\Omega$. To
deal with the quasi-periodic boundary conditions, we assume further that the
mesh is periodic in both $x_1$ and $x_2$ directions, i.e., the projection of the
surface mesh on any face of $\Omega$ perpendicular to the $x_1$-axis or
the $x_2$-axis into its opposite face coincides with the surface mesh on the
opposite face.

Denote by $\mathcal{F}_h$ the set of all faces of tetrahedrons in
$\mathcal{M}_h$. Let
$V_h\subset H_{\text{qper}}(\text{curl},\Omega)$ be an edge element
space that contains
the lowest order N\'{e}d\'{e}lec edge element space
\begin{equation}\label{ees}
  V_h=\{\bm{v}_h\in H_{\text{qper}}(\text{curl},\Omega):
\bm{v}_h|_T=\bm{a}_T+\bm{b}_T\times \bm{x},\, \bm{a}_T,
\bm{b}_T\in\mathbb{C}^3,\,\forall T\in\mathcal{M}_h\}.
\end{equation}
The finite element approximation to the problem \eqref{vp} reads as
follows: Find $\bm{E}_h\in V_h$ such that
\begin{equation}\label{fep}
  a(\bm{E}_h, \bm{\psi}_h)=\langle \bm{f}, \bm{\psi}_h\rangle_{\Gamma_1},\quad
\forall\bm{\psi}_h\in V_h.
\end{equation}

In the above formulation, the capacity operators $\mathscr{T}_j$ given by
\eqref{tbc} is defined by an infinite series which is unrealistic in actual
calculations. It is necessary to truncate the nonlocal operator by taking
sufficiently many terms of the expansions so as to attain our feasible
algorithm. We truncate the capacity operator $\mathscr{T}_j$ as follows:
\begin{equation}\label{ttbc}
  \mathscr{T}_j^{N_j}\bm{\varphi}=\sum_{n\in U_{N_j}}
(r_{1n}^{(j)},r_{2n}^{(j)},0)^\top e^{{\rm i}(\alpha_{1n}x_1+\alpha_{2n}x_2)},
\end{equation}
where the index set $U_{N_j}$ is defined as
\begin{equation}\label{Uj}
  U_{N_j}=\bigg\{n=(n_1,n_2)^\top\in\mathbb{Z}^2:\
|\al_n|\leq \frac{2\pi}{\LL}N_j\bigg\},\quad j=1,2.
\end{equation}
Roughly speaking, the points in $U_{N_j}$ occupy an area of $\pi N_j^2$.

Now we are ready to define the truncated finite element formulation which
leads to the discrete approximation to \eqref{vp}: Find
$\bm{E}_h^N\in V_h$ such that
\begin{equation}\label{fem}
a_N(\bm{E}_h^N, \bm{\psi}_h)=\langle\bm{f}^{N_f},
\bm{\psi}_h\rangle_{\Gamma_1},\quad\forall \bm{\psi}_h\in V_h,
\end{equation}
where the sesquilinear form $a_N$: $V_h\times V_h\rightarrow\mathbb{C}$ is
defined as follows:
\begin{equation}\label{tsf}
a_N(\bm{\varphi}, \bm{\psi})=\int_{\Omega}\mu^{-1}
\nabla\times\bm{\varphi}\cdot\nabla\times\bar{\bm\psi}
  -\omega^2\int_{\Omega}\varepsilon\bm{\varphi}\cdot\bar{\bm\psi}
  -{\rm i}\omega\sum_{j=1}^{2}\int_{\Gamma_j}
  \mathscr{T}_j^{N_j}\bm{\varphi}_{\Gamma_j}\cdot\bar{\bm\psi}_{\Gamma_j},
\end{equation}
and
\begin{equation*}
  \langle f^{N_f},\bm{\psi}_h\rangle_{\Gamma_1}=-2\bm{i}\omega\int_{\Gamma_1}
  \mathscr{T}_1^{N_f}\bm{E}_{\Gamma_1}^{\rm inc}\cdot(\bar{\psi}_h)_{\Gamma_1}.
\end{equation*}

For any $T\in\mathcal{M}_h$, we define the residuals
\begin{align*}
  & R_T^{(1)}:=\omega^2\varepsilon\bm{E}_h^N|_T
  -\nabla\times(\mu^{-1}\nabla\times\bm{E}_h^N|_T),\\
  &R_T^{(2)}:=-\omega^2\nabla\cdot(\varepsilon\bm{E}_h^N|_T).
\end{align*}
Given an interior face $F\in\mathcal{F}_h$, which is the common face of
$T_1$ and $T_2$, we define the jump residuals across $F$ as
\begin{align*}
  & J_F^{(1)}:=\mu^{-1}(\nabla\times\bm{E}_h^N|_{T_1}
  -\nabla\times\bm{E}_h^N|_{T_2})\times\bm{\nu}_F,\\
  &J_F^{(2)}:=\omega^2(\varepsilon\textbf{E}_h^N|_{T_2}
  -\varepsilon\bm{E}_h^N|_{T_1})\cdot\bm{\nu}_F,
\end{align*}
where the unit normal vector $\bm{\nu}_F$ on $F$ points from $T_2$ to $T_1$.
Given a face $F\in\mathcal{F}_h\cap\Gamma_1$, we define the residuals as
\begin{align*}
  &J_F^{(1)}=2[-(\mu^{-1}\nabla\times\bm{E}_h^N)\times\bm{\nu}_1
  +{\rm i}\omega \mathscr{T}_1^{N_1}(\bm{E}_h^N)_{\Gamma_1}
  -2{\rm i}\omega\mathscr{T}_1 \bm{E}^{\rm inc}_{\Gamma_1}]\\
  &J_F^{(2)}=2[\omega^2\varepsilon\bm{E}_h^N\cdot\bm{\nu}_1
-{\rm
i}\omega\text{div}_{\Gamma_1}(\mathscr{T}_1^{N_1}(\textbf{E}_h^N)_{\Gamma_1 })
  +2{\rm
i}\omega\text{div}_{\Gamma_1}(\mathscr{T}_1 \bm{E}^{\rm inc}_{\Gamma_1})].
\end{align*}
Given a face $F\in\mathcal{F}_h\cap\Gamma_2$, we define the residuals as
\begin{align*}
&J_F^{(1)}=2[-(\mu^{-1}\nabla\times\bm{E}_h^N)\times\bm{\nu}_2+{\rm i}\omega
\mathscr{T}_2^{N_2}(\bm{E}_h^N)_{\Gamma_2}]\\
&J_F^{(2)}=2[\omega^2\varepsilon\bm{E}_h^N\cdot\bm{\nu}_2-{\rm
i}\omega\text{div} _{\Gamma_2}(\mathscr{T}_2^{N_2}(\bm{E}_h^N)_{\Gamma_2
})].
\end{align*}

Define
\begin{align*}
  &\Gamma_{10}=\{\bm{x}\in\mathbb R^3: x_1=0,\,0<x_2<L_2,\,b_2<x_3<b_1\},\\
  &\Gamma_{11}=\{\bm{x}\in\mathbb R^3: x_1=L_1,\,0<x_2<L_2,\,b_2<x_3<b_1\},\\
  &\Gamma_{20}=\{\bm{x}\in\mathbb R^3: x_2=0,\,0<x_1<L_1,\,b_2<x_3<b_1\},\\
  &\Gamma_{21}=\{\bm{x}\in\mathbb R^3: x_2=L_2,\,0<x_1<L_1,\,b_2<x_3<b_1\}.
\end{align*}
For any face $F\in\mathcal{F}_h\cap \Gamma_{l0}$, let $F'\in\mathcal{F}_h$ be
the corresponding face on $\Gamma_{l1}\; (l=1,2)$, and let $T,T'\in
\mathcal{M}_h$ be the two elements such that $T\supset F$ and $T'\supset F'$. 
We define the jump residuals across $F$ and $F'$ as 
\begin{align*}
& J_F^{(1)}:=\mu^{-1}\big(e^{-{\rm i}\alpha_jL_j}\nabla\times\bm{E}_h^N|_{T'}
  -\nabla\times\bm{E}_h^N|_{T}\big)\times\bm{\nu}_F,\\
& J_{F'}^{(1)}:=\mu^{-1}\big(\nabla\times\bm{E}_h^N|_{T'}
  -e^{{\rm i}\alpha_jL_j}\nabla\times\bm{E}_h^N|_{T}\big)\times\bm{\nu}_F,\\
  &J_F^{(2)}:=\omega^2(\varepsilon\textbf{E}_h^N|_{T}
  -e^{-{\rm i}\alpha_jL_j}\varepsilon\bm{E}_h^N|_{T'})\cdot\bm{\nu}_F,\\
    &J_{F'}^{(2)}:=\omega^2(e^{{\rm i}\alpha_jL_j}\varepsilon\textbf{E}_h^N|_{T}
  -\varepsilon\bm{E}_h^N|_{T'})\cdot\bm{\nu}_F,
\end{align*}
where $\bm{\nu}_F$ is the  unit outward normal vector  to $F$.

For any $T\in\mathcal{M}_h$, denote by $\eta_T$ the local error estimator, which
is defined as follows:
\[
\eta_T^2=h_T^2\left(\|R_T^{(1)}\|_{L^2(T)^3}^2+\|R_T^{(2)}\|_{L^2(T)}
^2\right)+h_T\sum_{F\subset\partial T}
 \left (\|J_F^{(1)}\|_{L^2(F)^3}^2+\|J_F^{(2)}\|_{L^2(F)}^2\right).
\]

We now state the main result of this paper.

\begin{theorem}\label{mt}
Let $\bm{E}$ and $\bm{E}_h^N$ be the solutions of \eqref{vp} and
\eqref{fem}, respectively. Then there exist two integers $M_j$, $j=1,2$
independent of $h$ and satisfying $\big(\frac{2\pi M_j}{\LL}\big)^2>{\rm
Re}\kappa_j^2$ such that for $N_j\geq M_j$ the following a posteriori error
estimate holds:
\begin{align*}
\|\bm{E}-\bm{E}_h^N\|_{H({\rm curl},\Omega)}\leq
C\Bigg(\bigg(\sum_{T\in\mathcal{M}_h}\eta_T^2\bigg)^{1/2}
    +\sum\limits_{j=1}^2e^{-d_j\si_j}
    \|\bm{E}^{\rm inc}\|_{TL^2(\Gamma_1)}\Bigg),
\end{align*}
where $\si_j=\big(\big(\frac{2\pi N_j}{\LL}\big)^2-{\rm
Re}\kappa_j^2\big)^{1/2}$ and the constant $C$ is independent of $h$ and $M_j$.
\end{theorem}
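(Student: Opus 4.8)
The plan is to start from the inf-sup stability \eqref{isc}, which reduces the estimate to bounding the residual functional $\bm\psi\mapsto a(\bm E-\bm E_h^N,\bm\psi)$ over the unit ball of $H_{\mathrm{qper}}(\mathrm{curl},\Omega)$. Writing $\bm\xi=\bm E-\bm E_h^N$, I would use $a(\bm E,\bm\psi)=\langle\bm f,\bm\psi\rangle_{\Gamma_1}$ together with the identity $a(\bm\varphi,\bm\psi)-a_N(\bm\varphi,\bm\psi)=-\mathrm i\om\sum_{j=1}^2\int_{\Gamma_j}(\mathscr T_j-\mathscr T_j^{N_j})\bm\varphi_{\Gamma_j}\cdot\bar{\bm\psi}_{\Gamma_j}$ relating \eqref{sf} and \eqref{tsf}, and then insert the discrete equation \eqref{fem} tested against a N\'ed\'elec interpolant $\bm\psi_h=\Pi_h\bm\psi$. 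This decomposes $a(\bm\xi,\bm\psi)$ into a Galerkin residual $\langle\bm f,\bm\psi-\bm\psi_h\rangle_{\Gamma_1}-a_N(\bm E_h^N,\bm\psi-\bm\psi_h)$, an incident-truncation term $\langle\bm f-\bm f^{N_f},\bm\psi_h\rangle_{\Gamma_1}$, and the DtN-truncation term $\mathrm i\om\sum_j\int_{\Gamma_j}(\mathscr T_j-\mathscr T_j^{N_j})(\bm E_h^N)_{\Gamma_j}\cdot\bar{\bm\psi}_{\Gamma_j}$. Since $\bm E^{\rm inc}_{\Gamma_1}$ consists of the single Fourier mode $n=(0,0)$, which lies in $U_{N_f}$ once $N_f\ge M_f$, the incident-truncation term vanishes and $\bm f=\bm f^{N_f}$.

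For the Galerkin residual I would integrate by parts element by element, which produces exactly the interior residuals $R_T^{(1)},R_T^{(2)}$, the interior jump residuals $J_F^{(1)},J_F^{(2)}$, the boundary residuals on $\Gamma_1$ and $\Gamma_2$, and the quasi-periodic lateral jump residuals across $\Gamma_{l0}$ and $\Gamma_{l1}$. Using the N\'ed\'elec interpolation estimates --- with $\Pi_h$ chosen to respect the quasi-periodic boundary conditions --- together with the local approximation bounds for $\bm\psi-\bm\psi_h$ in terms of $h_T$, this piece is bounded by $C\big(\sum_{T\in\mathcal M_h}\eta_T^2\big)^{1/2}\norm{\bm\psi}_{H(\mathrm{curl},\Omega)}$. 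This part is routine and follows the standard residual a posteriori analysis for $H(\mathrm{curl})$ problems; the only extra care is to generate the lateral jump residuals with the correct phase factors $e^{\pm\mathrm i\al_jL_j}$ dictated by the quasi-periodicity.

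The crux is the DtN-truncation term. On each $\Gamma_j$ the operator $\mathscr T_j-\mathscr T_j^{N_j}$ only sees the modes $n\notin U_{N_j}$, which for $N_j\ge M_j$ are evanescent with ${\rm Im}\,\be_{jn}\ge\si_j$. The key geometric fact is that the medium is homogeneous in the slab between $\Gamma_j'$ and $\Gamma_j$, so $\bm E$ admits a purely outgoing Rayleigh expansion there; propagating its Fourier coefficients from $\Gamma_j'$ to $\Gamma_j$ multiplies mode $n$ by $e^{-{\rm Im}\,\be_{jn}\,d_j}\le e^{-\si_jd_j}$, which is what produces the exponentially small factor. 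Writing $\bm E_h^N=\bm E-\bm\xi$, the contribution of $\bm E$ is thus bounded by $e^{-d_j\si_j}$ times the trace of $\bm E$ on $\Gamma_j'$, which by the a priori stability of the scattering problem is controlled by $\norm{\bm E^{\rm inc}}_{TL^2(\Gamma_1)}$, giving precisely the second term of the theorem. The contribution of the error $\bm\xi$ is the genuine difficulty: its tangential trace lies only in $TH^{-1/2}_{\mathrm{qper}}(\mathrm{curl},\Gamma_j)$ whereas $\mathscr T_j-\mathscr T_j^{N_j}$ behaves like a first-order operator on high modes, and by \cite{HNPX11} no operator-norm smallness is available. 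Here the duality argument is essential: I would pair the error-boundary term with the solution of the adjoint problem for $a(\cdot,\cdot)$, which is itself an exact quasi-periodic scattering field and therefore inherits the same outgoing Rayleigh structure and evanescent decay across the slab $(\Gamma_j',\Gamma_j)$. Transferring the $e^{-d_j\si_j}$ decay onto this dual field, and using its adjoint stability, should let the error-boundary term be dominated by the residual estimator together with the exponentially small term. I expect that making this transfer rigorous and uniform in $h$ and $M_j$ is the main obstacle of the whole proof.
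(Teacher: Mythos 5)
Your proposal founders on its central organizing step: the reduction via the inf-sup condition \eqref{isc}. That reduction requires a bound of the form $|a(\bm{\xi},\bm{\psi})|\leq C(\eta+e^{-d_j\si_j}\|\bm{E}^{\rm inc}\|_{TL^2(\Gamma_1)})\|\bm{\psi}\|_{H({\rm curl},\Omega)}$ \emph{uniformly over arbitrary} $\bm{\psi}\in H_{\rm qper}({\rm curl},\Omega)$, and your own decomposition leaves behind the term ${\rm i}\omega\sum_j\int_{\Gamma_j}(\mathscr{T}_j-\mathscr{T}_j^{N_j})\bm{\xi}_{\Gamma_j}\cdot\bar{\bm\psi}_{\Gamma_j}$ with $\bm{\psi}$ arbitrary. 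By the very obstruction you cite from \cite{HNPX11}, this term admits no bound $o(1)\|\bm{\xi}\|_{H({\rm curl},\Omega)}\|\bm{\psi}\|_{H({\rm curl},\Omega)}$ as $N_j\to\infty$, and it is not controlled by the estimator either, since the estimator says nothing a priori about the high Fourier modes of $\bm{\xi}$. Your proposed remedy --- ``pair the error-boundary term with the solution of the adjoint problem'' --- is not available inside the supremum: $\bm{\psi}$ is the dual variable of the inf-sup bound, not a function you may choose. The paper never takes this route. Instead it keeps the truncation term applied to $\bm{\xi}$ on the \emph{left} (identity \eqref{aTT}, which effectively replaces $a$ by the truncated form acting on $\bm{\xi}$), takes $\bm{\psi}=\bm{\xi}$ to obtain the G\aa rding-type inequality \eqref{xi-curl} (using Lemma~\ref{imt} and Lemma~\ref{tr} for the truncated-DtN boundary term), which controls $\|\bm{\xi}\|_{H({\rm curl},\Omega)}$ only up to $\|\bm{\xi}\|_{L^2(\Omega)^3}$, and then estimates $\|\bm{\xi}\|_{L^2(\Omega)^3}$ by the separate duality identity \eqref{xi-L2} built on the Helmholtz decomposition \eqref{zeta} and the dual problem \eqref{dp}. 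Only in that last step does a fixed dual function appear against which the dangerous term can be analyzed.

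Even granting the switch to duality, your key structural claim about the dual field is false: the dual solution $\bm{W}$ of \eqref{dp} is driven by the volumetric source $\bm{\zeta}$ (the divergence-free part of $\ep\bm{\xi}$), which does \emph{not} vanish in the slab $\Omega_j'\setminus\Omega_j$. Hence $\bm{W}$ has no outgoing Rayleigh expansion there, and its high Fourier modes on $\Gamma_j$ carry no factor $e^{-d_j\si_j}$; the exponential decay of Lemma~\ref{expo deca} is a property of the source-free, purely outgoing field $\bm{E}$ and cannot be ``transferred'' to $\bm{W}$. What is actually true --- and is the technical heart of the paper (Lemma~\ref{lem TT}) --- is only \emph{algebraic} smallness: for $n\notin U_{N_j}$ one solves the two-point boundary value problems \eqref{cdp}, \eqref{cdp2} for the Fourier coefficients of $\bm{W}$ in the slab explicitly, obtaining $|W_{jn}(b_j)|\ls|\beta_{jn}|^{-2}\|\zeta_{jn}\|_{L^\infty}+e^{-d_j|\beta_{jn}|}|W_{jn}(b_j')|$, and, after handling the third component through the divergence-free structure (Lemma~\ref{L-div}) and the gradient part through \eqref{xiq}, one gets
\begin{align*}
\sum_{j=1}^2\bigg|\omega\int_{\Gamma_j}(\mathscr{T}_j-\mathscr{T}_j^{N_j})
\bm{\xi}_{\Gamma_j}\cdot\bar{\bm W}_{\Gamma_j}\bigg|
\leq \frac23\abs{(\ep\bm{\xi},\bm{\xi})}
+C\sum_{j=1}^2N_j^{-2}\big(1+d_j^{-4}\big)\|\bm{\xi}\|_{H({\rm curl},\Omega)}^2 .
\end{align*}
The theorem then follows not because this term is exponentially small but because the factors $\frac23$ and $N_j^{-2}$ allow it to be \emph{absorbed} into the left-hand sides of \eqref{xi-curl} and \eqref{xi-L2} once $N_j$ is large; this absorption mechanism, together with the Hodge decomposition and Sch\"oberl/Scott--Zhang quasi-interpolation machinery needed in Lemma~\ref{att} (a plain N\'ed\'elec interpolant is not even defined on all of $H_{\rm qper}({\rm curl},\Omega)$), is exactly what is missing from your outline.
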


\section{Proof of the main theorem} The section is devoted to the proof of
Theorem \ref{mt}.

\subsection{The dual problem}

Denote the error by $\bm{\xi}=\bm{E}-\bm{E}_h^N$. Consider the Helmholtz decomposition
\begin{align}\label{zeta}
\ep\bm{\xi}=\ep\nabla q+\bm{\zeta},\quad q\in H_0^1(\Omega), \quad \text{div}
\bm{\zeta}=0.
\end{align}
Introduce the following dual
problem to the original scattering problem: Find $\bm{W}\in
H_{\text{qper}}(\text{curl},\Omega)$ such that it satisfies the variational
problem
\begin{equation}\label{dp}
a(\bm{v}, \bm{W})=(\bm{v}, \bm{\ze}),\quad \forall
\bm{v}\in H_{\text{qper}}(\text{curl},\Omega).
\end{equation}
It is easy to verify that $\bm{W}$ is the weak solution of the
boundary value problem:
\[
\begin{cases}
\nabla\times(({\bar\mu})^{-1}\nabla\times\bm{W})-\omega^2\bar\varepsilon
\bm{W}=\bm{\ze }\quad &\text{in}\ \Omega,\\
(({\bar\mu})^{-1}\nabla\times\bm{W})\times \bm{\nu}_{j}=-{\rm i}\omega
\mathscr{T}_j^{*}\bm{W}_{\Gamma_j}\quad &\text{on}\ \Gamma_j,
  \end{cases}
\]
where the adjoint operator $\mathscr{T}_j^*$ takes the following form:
\begin{equation}\label{atbc}
\mathscr{T}_j^*\bm{\varphi}=\sum_{n\in\mathbb{Z}^2}(\rho_{1n}^{(j)},\rho_{2n}^{
(j)}, 0)^\top e^{{\rm i}(\alpha_{1n}x_1+\alpha_{2n}x_2)},\quad j=1,2.
\end{equation}
Here
\begin{align*}
\rho_{1n}^{(j)}&=\frac{1}{\omega\bar{\mu}_j\bar{\beta}_{jn}}
\big[(\bar{\kappa}_j^2-\alpha_{2n}^2)\varphi_{1n}^{(j)}+\alpha_{1n}\alpha_{2n}
\varphi_{2n}^{(j)}\big],\\
\rho_{2n}^{(j)}&=\frac{1}{\omega\bar{\mu}_j\bar{\beta}_{jn}}
 \big[(\bar{\kappa}_j^2-\alpha_{1n}^2)\varphi_{2n}^{(j)}+\alpha_{1n}
\alpha_ {2n}\varphi_{1n}^{(j)}\big].
\end{align*}
Assuming that the dual problem has a unique weak solution, we have the
stability estimate
\begin{equation}\label{dps}
 \|\bm{W}\|_{H(\text{curl},\Omega)}\leq C_0\|\bm{\ze}\|_{L^2(\Omega)^3},
\end{equation}
where $C_0$ is a positive constant.

Denote by $U_h\subset H^{1}(\Omega)$ the standard continuous piecewise
linear finite element space. Clearly, we have
\[
\nabla U_h\subseteq V_h,
\]
where $V_h$ is the lowest order N\'{e}d\'{e}lec edge element space
defined in (\ref{ees}).

\subsection{Error representation formula}

The following lemma shows that $\|\bm{\xi}\|_{H({\rm {curl}},\Omega)}$
can be bounded by $\|\bm{\xi}\|_{L^2(\Omega)^3}$ and vice versa.
\begin{lemma}
Let $\bm{E}$, $\bm{E}_h^N$, and $\bm{W}$ be the solutions to the problems
\eqref{vp}, \eqref{fem}, and \eqref{dp}, respectively. Then we have
\begin{align}\label{xi-curl}
\|\bm{\xi}\|_{H({\rm {curl}},\Omega)}^2\lesssim&{\rm{Re}}\bigg[a(\bm\xi,\bm\xi)+{\rm
i}\omega\sum_{j=1}^2\int_{\Gamma_j}
(\mathscr{T}_j-\mathscr{T}_j^{N_j})\bm{\xi}_{\Gamma_j}\cdot\bar{\bm\xi}_{
\Gamma_j}\bigg] \\
&-\omega\sum_{j=1}^2{\rm
Im}\int_{\Gamma_j}\mathscr{T}_j^{N_j}\bm{\xi}_{\Gamma_j}
\cdot\bar{\bm\xi}_{\Gamma_j }
 +{\rm Re}\int_\Omega (\omega^2\varepsilon+{\bf
I})\bm{\xi}\cdot\bar{\bm \xi}\nonumber,
\end{align}
\begin{align}\label{xi-L2}
(\ep\bm{\xi},\bm{\xi})=&\bigg[a(\bm{\xi}, \bm{W})+{\rm
i}\omega\sum_{j=1}^2\int_{\Gamma_j}(\mathscr{T}_j-\mathscr{T}_j^{N_j})\bm{\xi}_{
\Gamma_j}\cdot\bar{\bm W}_{\Gamma_j}\bigg]\\
&+(\ep\bm{\xi},\na q)-{\rm
i}\omega\sum_{j=1}^2\int_{\Gamma_j}(\mathscr{T}_j-\mathscr{T}_j^{N_j})
\bm{\xi}_{\Gamma_j}\cdot\bar{\bm W}_{\Gamma_j},\notag
\end{align}
and for any $\bm{\psi}_h\in V_h, \bm{\psi}\in
H_{\rm qper}({\rm curl},\Omega), q_h\in U_h\cap H_0^1(\Om)$, there hold
\begin{align}
a(\bm{\xi}, \bm{\psi})&+{\rm
i}\omega\sum_{j=1}^2\int_{\Gamma_j}(\mathscr{T}_j-\mathscr{T}_j^{N_j})\bm{\xi}_{
\Gamma_j}\cdot\bar{\bm\psi}_{\Gamma_j}\nonumber\\
&=\langle\bm{f},
\bm{\psi}-\bm{\psi}_h\rangle_{\Gamma_1}-a_N(\bm{E}_h^N,
\bm{\psi}-\bm{\psi}_h)+{\rm i}\omega\sum_{j=1}^2\int_{\Gamma_j}(\mathscr{T}_j-\mathscr{T}_j^{N_j} )
\bm{E}_{\Gamma_j}\cdot\bar{\bm\psi}_{\Gamma_j},
\label{aTT}\\
&\qquad(\ep\bm{\xi},\na q)=-(\ep\bm{E}_h^N,\na (q-q_h)).\label{xiq}
\end{align}
\end{lemma}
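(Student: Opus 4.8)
The four identities are algebraic rearrangements assembled from three inputs, namely the weak problem \eqref{vp}, the discrete problem \eqref{fem}, and the dual problem \eqref{dp}, together with the Helmholtz decomposition \eqref{zeta}. None requires an approximation or a compactness argument; the entire content lies in tracking the sesquilinear bookkeeping and the splitting $\mathscr{T}_j=\mathscr{T}_j^{N_j}+(\mathscr{T}_j-\mathscr{T}_j^{N_j})$ of the capacity operators. I would prove them one at a time.

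For \eqref{xi-curl} I would begin from the definition \eqref{sf} of $a(\bm\xi,\bm\xi)$ and take real parts, using $\mathrm{Re}(\mathrm{i}z)=-\mathrm{Im}(z)$ to turn the boundary term $-\mathrm{i}\omega\sum_j\int_{\Gamma_j}\mathscr{T}_j\bm\xi_{\Gamma_j}\cdot\bar{\bm\xi}_{\Gamma_j}$ into $\omega\sum_j\mathrm{Im}\int_{\Gamma_j}\mathscr{T}_j\bm\xi_{\Gamma_j}\cdot\bar{\bm\xi}_{\Gamma_j}$. Adding $\mathrm{Re}\big[\mathrm{i}\omega\sum_j\int_{\Gamma_j}(\mathscr{T}_j-\mathscr{T}_j^{N_j})\bm\xi_{\Gamma_j}\cdot\bar{\bm\xi}_{\Gamma_j}\big]$ and subtracting $\omega\sum_j\mathrm{Im}\int_{\Gamma_j}\mathscr{T}_j^{N_j}\bm\xi_{\Gamma_j}\cdot\bar{\bm\xi}_{\Gamma_j}$ makes the three surface contributions telescope to zero, while adding $\mathrm{Re}\int_\Omega(\omega^2\ep+\bm I)\bm\xi\cdot\bar{\bm\xi}$ cancels the $-\omega^2\mathrm{Re}\int_\Omega\ep|\bm\xi|^2$ volume term. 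What survives is exactly $\int_\Omega\mu^{-1}|\na\times\bm\xi|^2+\|\bm\xi\|_{L^2(\Omega)^3}^2$, so the claimed inequality follows from $\mu^{-1}\ge c>0$, the first integral then controlling $\|\na\times\bm\xi\|_{L^2}^2$.

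For \eqref{xi-L2} I would test the dual problem \eqref{dp} with $\bm v=\bm\xi$ to get $a(\bm\xi,\bm W)=(\bm\xi,\bm\ze)$, then eliminate $\bm\ze$ through \eqref{zeta} in the form $\bm\ze=\ep\bm\xi-\ep\na q$; rearranging produces $(\ep\bm\xi,\bm\xi)=a(\bm\xi,\bm W)+(\ep\bm\xi,\na q)$. The displayed form then results from inserting the canceling pair $\pm\,\mathrm{i}\omega\sum_j\int_{\Gamma_j}(\mathscr{T}_j-\mathscr{T}_j^{N_j})\bm\xi_{\Gamma_j}\cdot\bar{\bm W}_{\Gamma_j}$, which alters nothing but isolates the bracketed quantity so that \eqref{aTT} (with $\bm\psi=\bm W$) can later be applied to it, while the trailing term records the DtN truncation error. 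Identity \eqref{aTT} I would derive by writing $a(\bm\xi,\bm\psi)=\langle\bm f,\bm\psi\rangle_{\Gamma_1}-a(\bm E_h^N,\bm\psi)$ via \eqref{vp}, then replacing $a(\bm E_h^N,\cdot)$ by $a_N(\bm E_h^N,\cdot)$ plus the surface difference $\mathrm{i}\omega\sum_j\int_{\Gamma_j}(\mathscr{T}_j-\mathscr{T}_j^{N_j})(\bm E_h^N)_{\Gamma_j}\cdot\bar{\bm\psi}_{\Gamma_j}$ coming from comparing \eqref{sf} with \eqref{tsf}; adding the same difference evaluated on $\bm\xi$ merges the two into the difference on $\bm E=\bm\xi+\bm E_h^N$. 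Finally I would subtract the discrete equation \eqref{fem} tested with $\bm\psi_h$ to produce the increment $\bm\psi-\bm\psi_h$, using $\langle\bm f,\bm\psi_h\rangle_{\Gamma_1}=\langle\bm f^{N_f},\bm\psi_h\rangle_{\Gamma_1}$.

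The last identity \eqref{xiq} is a pair of Galerkin orthogonalities against gradients. Testing \eqref{vp} with $\bm\psi=\na\phi$, $\phi\in H_0^1(\Omega)$, annihilates the curl term and, since $\phi$ vanishes on $\partial\Omega$, all surface and right-hand-side contributions as well, leaving $(\ep\bm E,\na\phi)=0$; taking $\phi=q$ gives $(\ep\bm E,\na q)=0$. The discrete analogue uses $\na U_h\subseteq V_h$: testing \eqref{fem} with $\bm\psi_h=\na q_h$, $q_h\in U_h\cap H_0^1(\Omega)$, gives $(\ep\bm E_h^N,\na q_h)=0$. Subtracting and using $\ep\bm\xi=\ep\bm E-\ep\bm E_h^N$ then yields $(\ep\bm\xi,\na q)=-(\ep\bm E_h^N,\na q)=-(\ep\bm E_h^N,\na(q-q_h))$. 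The main obstacle is not any single estimate but the disciplined bookkeeping of conjugates and of the $\mathscr{T}_j-\mathscr{T}_j^{N_j}$ splittings throughout, together with the one genuinely substantive point hidden in \eqref{aTT}: since the incident field $\bm E^{\mathrm{inc}}=\bm p\,e^{\mathrm{i}\bm q\cdot\bm x}$ is a single quasi-periodic Fourier mode ($n=0$), the truncated data $\bm f^{N_f}$ coincides with $\bm f$ as soon as $0\in U_{N_f}$, so the full functional $\langle\bm f,\cdot\rangle_{\Gamma_1}$ legitimately appears in \eqref{aTT} even though the discretization uses $\bm f^{N_f}$.
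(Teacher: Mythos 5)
Your proposal is correct and follows essentially the same route as the paper: \eqref{xi-curl} by taking real parts in the definition \eqref{sf} of $a$, \eqref{xi-L2} by testing the dual problem \eqref{dp} with $\bm{v}=\bm{\xi}$ and using the Helmholtz decomposition \eqref{zeta} plus the add-and-subtract of the truncation term, \eqref{aTT} by telescoping \eqref{vp}, \eqref{fem} and the splitting $a-a_N$, and \eqref{xiq} by Galerkin orthogonality against gradients (your version, which derives $(\ep\bm{E},\na q)=0$ and $(\ep\bm{E}_h^N,\na q_h)=0$ separately, is equivalent to the paper's route through \eqref{aTT} and ${\rm div}(\ep\bm{E})=0$). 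One small improvement over the paper: you state and justify the identity $\langle\bm{f},\bm{\psi}_h\rangle_{\Gamma_1}=\langle\bm{f}^{N_f},\bm{\psi}_h\rangle_{\Gamma_1}$ (valid once $0\in U_{N_f}$, since $\bm{E}^{\rm inc}$ is the single $n=0$ Fourier mode), which the paper's proof uses silently when it replaces $\langle\bm{f},\bm{\psi}_h\rangle_{\Gamma_1}-a_N(\bm{E}_h^N,\bm{\psi}_h)$ by zero.
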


\begin{proof}
The inequality \eqref{xi-curl} follows from the definition of the
sesquilinear form $a$ in \eqref{sf}. The identity \eqref{xi-L2} follows by
taking $\bm{v}=\bm{\xi}$ in \eqref{dp} and using \eqref{zeta}. It remains to prove (\ref{aTT}) and \eqref{xiq}. Using
\eqref{vp} and \eqref{fem}, we obtain
\begin{align*}
a(\bm{\xi},\bm{\psi})=&a(\bm{E}-\bm{E}_h^N, \bm{\psi}-\bm{\psi}_h)+a(\bm{E}
-\bm{E}_h^N ,\bm{\psi}_h)\\
=&a(\bm{E}, \bm{\psi}-\bm{\psi}_h)-a(\bm{E}_h^N,
\bm{\psi}-\bm{\psi}_h)+a(\bm{E}-\bm{E}_h^N, \bm{\psi}_h)\\
    =&\langle\bm{f},
\bm{\psi}-\bm{\psi}_h\rangle_{\Gamma_1}-a_N(\bm{E}_h^N, \bm{\psi}-\bm{\psi}_h)\\
    &\ +a_N(\bm{E}_h^N, \bm{\psi}-\bm{\psi}_h)-a(\bm{E}_h^N,
\bm{\psi}-\bm{\psi}_h)+a(\bm{E}-\bm{E}_h^N, \bm{\psi}_h)\\
     =&\langle\bm{f},
\bm{\psi}-\bm{\psi}_h\rangle_{\Gamma_1}-a_N(\bm{E}_h^N, \bm{\psi}-\bm{\psi}_h)\\
    &\ +a_N(\bm{E}_h^N, \bm{\psi}-\bm{\psi}_h)-a(\bm{E}_h^N,
\bm{\psi}-\bm{\psi}_h)+\langle\bm{f},
\bm{\psi}_h\rangle_{\Gamma_1}-a(\bm{E}_h^N, \bm{\psi}_h)\\
 =&\langle\bm{f}, \bm{\psi}-\bm{\psi}_h\rangle_{\Gamma_1}
    -a_N(\bm{E}_h^N, \bm{\psi}-\bm{\psi}_h)\\
    &\ +a_N(\bm{E}_h^N, \bm{\psi})-a(\bm{E}_h^N, \bm{\psi})\\
  =&\langle\bm{f},\bm{\psi}-\bm{\psi}_h\rangle_{\Gamma_1}-a_N(\bm{E}_h^N,
\bm{\psi}-\bm{\psi}_h)\\
    &\ -\sum_{j=1}^2{\rm i}\omega\int_{\Gamma_j}
      (\mathscr{T}_j-\mathscr{T}_j^{N_j})\bm{\xi}_{\Gamma_j}\cdot\bar{\bm\psi}_{\Gamma_j}
+{\rm i}\omega\sum_{j=1}^2\int_{\Gamma_j}(\mathscr{T}_j-\mathscr{T}_j^{N_j}
)\bm{E}_{\Gamma_j} \cdot\bar{\bm\psi}_{\Gamma_j},
 \end{align*}
which implies \eqref{aTT}. By taking $\bm{\psi}=\bm{\psi}_h=\na q_h, \forall
q_h\in U_h\cap H_0^1(\Om)$ in the above identity and using \eqref{sf}, we
conclude that
\begin{align*}
(\ep\xi,\na q_h)=0.
\end{align*}
Then \eqref{xiq} follows by noting that ${\rm div} (\bm{\ep}\bm{E})=0$.
This completes the proof of the lemma.
\end{proof}

\subsection{Several trace results}

The following lemma is a trace regularity result for
$H_{\text{qper}}(\text{curl},\Omega)$. The proof can be found in
\cite{BLW10}.
\begin{lemma}\label{tt}
Let $\gamma_0=\max\{\sqrt{1+(b_1-b_2)^{-1}},\sqrt{2}\}$. Then the following
estimate holds:
\[
\|\bm{\psi}_{\Gamma_j}\|_{TH_{\rm{qper}}^{-1/2}({\rm curl},\Gamma_j)}
\leq\gamma_0\|\bm{\psi}\|_{H({\rm curl},\Omega)},\quad\forall
\bm{\psi}\in H_{\rm{qper}}({{\rm curl}},\Omega).
\]
\end{lemma}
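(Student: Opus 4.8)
The plan is to diagonalize the estimate with a quasi-periodic Fourier expansion in the tangential variables, turning it into a family of one-dimensional trace inequalities indexed by $n\in\mathbb Z^2$. Writing $\bm\psi=\sum_{n\in\mathbb Z^2}\bm\psi_n(x_3)\,e^{{\rm i}(\al_{1n}x_1+\al_{2n}x_2)}$ with $\bm\psi_n=(\psi_{1n},\psi_{2n},\psi_{3n})^\top$, Parseval's identity gives
\[
\norm{\bm\psi}_{H({\rm curl},\Om)}^2=L_1L_2\sum_{n\in\mathbb Z^2}\int_{b_2}^{b_1}\big(\abs{\bm\psi_n(x_3)}^2+\abs{(\na\times\bm\psi)_n(x_3)}^2\big)\,dx_3,
\]
where $(\na\times\bm\psi)_n$ denotes the $n$-th Fourier coefficient of the curl. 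By the characterization \eqref{normcurlgamma}, the quantity to be bounded is the weighted sum over $n$ of $(1+\abs{\al_n}^2)^{-1/2}$ times $\abs{\psi_{1n}(b_j)}^2+\abs{\psi_{2n}(b_j)}^2+\abs{g_n(b_j)}^2$, where $g_n:=\al_{1n}\psi_{2n}-\al_{2n}\psi_{1n}$ is the $n$-th surface-curl coefficient. It therefore suffices to bound the three boundary values of each mode by its one-dimensional data on $(b_2,b_1)$, with the correct power of $\abs{\al_n}$, and then to sum.

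The point that makes this possible is that the $x_3$-derivatives needed to control the traces are exactly what the curl records. Reading off the Fourier components of $\na\times\bm\psi$ and writing $'=d/dx_3$, one has $\psi_{1n}'=(\na\times\bm\psi)_{2n}+{\rm i}\al_{1n}\psi_{3n}$ and $\psi_{2n}'={\rm i}\al_{2n}\psi_{3n}-(\na\times\bm\psi)_{1n}$, together with the crucial cancellation
\[
g_n'=-\al_{1n}(\na\times\bm\psi)_{1n}-\al_{2n}(\na\times\bm\psi)_{2n},
\]
in which the $\psi_{3n}$ contributions drop out, so that $\abs{g_n'}\le\abs{\al_n}\big(\abs{(\na\times\bm\psi)_{1n}}^2+\abs{(\na\times\bm\psi)_{2n}}^2\big)^{1/2}$ by Cauchy--Schwarz. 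Moreover the surface-curl mode itself is a curl component, $(\na\times\bm\psi)_{3n}={\rm i}g_n$, so $\abs{g_n}$ is already present in the right-hand side.

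For each mode I would then use the one-dimensional trace identity, for $f\in H^1(b_2,b_1)$ and $d_0:=b_1-b_2$,
\[
\abs{f(b_1)}^2=\frac1{d_0}\int_{b_2}^{b_1}\abs{f}^2\,dx_3+\frac2{d_0}\int_{b_2}^{b_1}(x_3-b_2)\,{\rm Re}\big(\bar f f'\big)\,dx_3
\]
(and its analogue with weight $b_1-x_3$ for the endpoint $b_2$), applied to $f=\psi_{1n},\psi_{2n},g_n$. The decisive step is a weighted Young inequality: after multiplying through by $(1+\abs{\al_n}^2)^{-1/2}$, I would split the cross term with parameter $(1+\abs{\al_n}^2)^{1/2}$, so that the derivative contribution is weighted by $(1+\abs{\al_n}^2)^{-1}$. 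Since $\abs{\al_n}^2/(1+\abs{\al_n}^2)\le1$, this factor exactly absorbs the $\abs{\al_n}^2$ produced by $g_n'$ above and by the $\al_{1n}\psi_{3n}$, $\al_{2n}\psi_{3n}$ terms in $\psi_{1n}',\psi_{2n}'$, leaving mode-wise bounds in terms of the integrals appearing in $\norm{\bm\psi}_{H({\rm curl},\Om)}^2$. Summing over $n$ and restoring $L_1L_2$ then gives the estimate.

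The main obstacle is this last, quantitative, step. The negative-order weight $(1+\abs{\al_n}^2)^{-1/2}$ is indispensable---it is precisely what an $H({\rm curl})$ field can afford on its tangential trace, and without it the derivative terms would grow like $\abs{\al_n}$---but converting the mode-wise inequalities into the sharp constant $\ga_0=\max\{\sqrt{1+(b_1-b_2)^{-1}},\sqrt2\}$ requires keeping the trace identity in its exact form and grouping the terms so that the coefficients collapse to the two values $1+(b_1-b_2)^{-1}$ and $2$, whose maximum is $\ga_0^2$; crude splittings such as $\abs{a+b}^2\le2\abs a^2+2\abs b^2$ would only yield a larger generic constant.
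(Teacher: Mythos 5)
Your proof is correct and takes essentially the same route as the source: the paper does not prove Lemma \ref{tt} itself but cites \cite{BLW10}, where the argument is precisely your Fourier-mode diagonalization combined with a one-dimensional trace identity and a Young inequality weighted by $(1+|\alpha_n|^2)^{1/2}$ (the same technique the paper uses to prove its companion Lemma \ref{tr}). Your structural observations do deliver the sharp constant: using the cancellation $g_n'=-\alpha_{1n}(\nabla\times\bm{\psi})_{1n}-\alpha_{2n}(\nabla\times\bm{\psi})_{2n}$, the identity $(\nabla\times\bm{\psi})_{3n}={\rm i}g_n$, and the weighted Cauchy--Schwarz bound $|\psi_{1n}'|^2=|(\nabla\times\bm{\psi})_{2n}+{\rm i}\alpha_{1n}\psi_{3n}|^2\le(1+|\alpha_{1n}|^2)\big(|(\nabla\times\bm{\psi})_{2n}|^2+|\psi_{3n}|^2\big)$ (and its analogue for $\psi_{2n}'$), the mode-wise estimate carries coefficient $1+(b_1-b_2)^{-1}$ on the $\int|\psi_{1n}|^2$, $\int|\psi_{2n}|^2$, $\int|g_n|^2$ terms and $2$ on the $\int|\psi_{3n}|^2$ and tangential-curl terms, whose maximum is exactly $\gamma_0^2$.
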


\begin{lemma}\label{tr}
For any $\de>0$, there exits a constant $C$ depending only on $\de$, $b_1$,
and $b_2$ such that the following estimate holds:
\[
\|\bm{\psi}_{\Gamma_j}\|_{H^{-1/2}_{\rm
qper}(\Gamma_j)^3}^2\leq\de\|\nabla\times\bm{ \psi }
\|_{L^2(\Omega)^3}^2+C(\de)\|\bm{\psi}\|_{L^2(\Omega)^3}^2,\quad\forall\,
\bm{\psi}\in H_{\rm qper}({\rm curl},\Omega).
\]
\end{lemma}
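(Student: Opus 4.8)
The plan is to carry out a mode-by-mode estimate in the quasi-periodic Fourier basis, exploiting the fact that the $H^{-1/2}$ weight is exactly tuned to the tangential frequency. First I would expand
$\bm{\psi}=\sum_{n\in\mathbb Z^2}(\psi_{1n}(x_3),\psi_{2n}(x_3),\psi_{3n}(x_3))^\top e^{{\rm i}(\alpha_{1n}x_1+\alpha_{2n}x_2)}$ and record the Plancherel identities for the three quantities in play. Since $\bm{\psi}_{\Gamma_j}=(\psi_1,\psi_2,0)^\top|_{x_3=b_j}$ is the \emph{tangential} trace, only the first two components enter, and
\[
\|\bm{\psi}_{\Gamma_j}\|_{H^{-1/2}_{\rm qper}(\Gamma_j)^3}^2=\LL^2\sum_{n}(1+|\alpha_n|^2)^{-1/2}\big(|\psi_{1n}(b_j)|^2+|\psi_{2n}(b_j)|^2\big),
\]
while $\|\bm{\psi}\|_{L^2(\Omega)^3}^2=\LL^2\sum_n\int_{b_2}^{b_1}\sum_{k=1}^3|\psi_{kn}|^2$ and the curl norm equals $\LL^2\sum_n\int_{b_2}^{b_1}(|c_{1n}|^2+|c_{2n}|^2+|\alpha_{1n}\psi_{2n}-\alpha_{2n}\psi_{1n}|^2)$ with $c_{1n}={\rm i}\alpha_{2n}\psi_{3n}-\psi_{2n}'$ and $c_{2n}=\psi_{1n}'-{\rm i}\alpha_{1n}\psi_{3n}$ (here $'=d/dx_3$). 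This already isolates the crucial structural point: the $x_3$-derivatives of the tangential components appear in the curl only coupled to $\alpha_{jn}\psi_{3n}$.

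Next I would apply the elementary one-dimensional trace inequality on $(b_2,b_1)$, namely $|f(b_j)|^2\le (b_1-b_2)^{-1}\|f\|_{L^2(b_2,b_1)}^2+2\|f\|_{L^2(b_2,b_1)}\|f'\|_{L^2(b_2,b_1)}$, which follows by averaging $|f(b_j)|^2=|f(x_3)|^2+\int_{x_3}^{b_j}(|f|^2)'$ over $x_3$, to each of $f=\psi_{1n},\psi_{2n}$. The derivatives are then eliminated in favor of curl data via $\psi_{1n}'=c_{2n}+{\rm i}\alpha_{1n}\psi_{3n}$ and $\psi_{2n}'={\rm i}\alpha_{2n}\psi_{3n}-c_{1n}$, so that, e.g., $\|\psi_{1n}'\|\le\|c_{2n}\|+|\alpha_{1n}|\,\|\psi_{3n}\|$ (all norms over $(b_2,b_1)$).

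The heart of the argument is the weighted summation. I would split $2\|\psi_{1n}\|\|\psi_{1n}'\|\le 2\|\psi_{1n}\|\|c_{2n}\|+2|\alpha_{1n}|\,\|\psi_{1n}\|\|\psi_{3n}\|$ and treat the two pieces against the weight $(1+|\alpha_n|^2)^{-1/2}$ separately. For the curl piece, a Young inequality $2\|\psi_{1n}\|\|c_{2n}\|\le \lambda_n^{-1}\|c_{2n}\|^2+\lambda_n\|\psi_{1n}\|^2$ with the frequency-adapted choice $\lambda_n=\de^{-1}(1+|\alpha_n|^2)^{-1/2}$ turns $(1+|\alpha_n|^2)^{-1/2}\lambda_n^{-1}\|c_{2n}\|^2$ into exactly $\de\|c_{2n}\|^2$, while the companion term becomes $(1+|\alpha_n|^2)^{-1}\de^{-1}\|\psi_{1n}\|^2\le\de^{-1}\|\psi_{1n}\|^2$; the $(b_1-b_2)^{-1}\|\psi_{1n}\|^2$ term is handled trivially using $(1+|\alpha_n|^2)^{-1/2}\le1$. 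The delicate piece is the coupling term, and the key observation is that the $H^{-1/2}$ weight is designed precisely for it: since $|\alpha_{1n}|\le|\alpha_n|\le(1+|\alpha_n|^2)^{1/2}$ we have $(1+|\alpha_n|^2)^{-1/2}|\alpha_{1n}|\le 1$, whence $(1+|\alpha_n|^2)^{-1/2}\,2|\alpha_{1n}|\,\|\psi_{1n}\|\|\psi_{3n}\|\le \|\psi_{1n}\|^2+\|\psi_{3n}\|^2$, a pure $L^2$ bound with \emph{no} frequency growth. The analogous estimates hold for $\psi_{2n}$. The main obstacle is exactly this coupling to $\alpha_{jn}\psi_{3n}$: handled naively it would contribute $|\alpha_n|^2\|\psi_{3n}\|^2$, which is not controlled by the $H({\rm curl})$ norm, and it is only the exact cancellation supplied by the $(1+|\alpha_n|^2)^{-1/2}$ weight that rescues the estimate. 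Summing all contributions over $n$ and invoking Plancherel yields $\|\bm{\psi}_{\Gamma_j}\|_{H^{-1/2}_{\rm qper}(\Gamma_j)^3}^2\le\de\|\na\times\bm{\psi}\|_{L^2(\Omega)^3}^2+C(\de)\|\bm{\psi}\|_{L^2(\Omega)^3}^2$ with $C(\de)$ depending only on $\de,b_1,b_2$; a density argument over smooth quasi-periodic fields justifies the pointwise traces and the termwise manipulations.
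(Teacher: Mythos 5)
Your proposal is correct and follows essentially the same route as the paper's own proof: a mode-by-mode Fourier/Plancherel argument, the averaged one-dimensional trace inequality on $(b_2,b_1)$, a frequency-adapted Young inequality placing the weight $(1+|\alpha_n|^2)^{-1}$ on the $x_3$-derivatives, and the completion of $\psi_{1n}',\psi_{2n}'$ to curl components so that the $H^{-1/2}$ weight exactly absorbs the $\alpha_{jn}\psi_{3n}$ coupling. The only difference is organizational (you apply Young after splitting the derivative into curl and coupling pieces, while the paper applies it to $2\|\zeta\|\|\zeta'\|$ first and splits at the level of squares), which does not change the substance of the argument.
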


\begin{proof}
First we have
\begin{align*}
    &(b_1-b_2)|\zeta(b_j)|^2\\
=&\int_{b_2}^{b_1}|\zeta(x_3)|^2{\rm d}x_3+\int_{b_2}^{b_1}
    \int_{x_3}^{b_j}\frac{{\rm d}}{{\rm d}\tau}|\zeta(\tau)|^2{\rm d}\tau{\rm
d}x_3\\
\leq&\int_{b_2}^{b_1}|\zeta(x_3)|^2{\rm d}x_3+(b_1-b_2)\int_{b_2}^{b_1}
    2|\zeta(x_3)||\zeta^{'}(x_3)|{\rm d}x_3\\
\leq&\int_{b_2}^{b_1}|\zeta(x_3)|^2{\rm d}x_3+(b_1-b_2)
\bigg(\frac{2(1+|\alpha_n|^2)^{1/2}}{\de}\int_{b_2}^{b_1}|\zeta(x_3)|^2{\rm
d}x_3\\
    &\hspace{2cm}+\frac{\de}{2(1+|\alpha_n|^2)^{1/2}}
    \int_{b_2}^{b_1}|\zeta^{'}(x_3)|^2{\rm d}x_3\bigg),
\end{align*}
which implies that
\begin{align}\label{Ceta}
 &(1+|\alpha_n|^2)^{-1/2}|\zeta(b_j)|^2\notag\\
\leq&\ C(\de)\int_{b_2}^{b_1}|\zeta(x_3)|^2{\rm
d}x_3+\frac{\de}{2}(1+|\alpha_n|^2)^{-1}\int_{b_2}^{b_1}|\zeta^{'}(x_3)|^2{\rm
d}x_3.
  \end{align}

Given $\bm{\psi}\in H_{{\rm qper}}({\rm curl},\Omega)$, it has the expansion
\begin{equation*}
\bm{\psi}(\bm{x})=\sum_{n\in\mathbb{Z}^2}(\psi_{1n}(x_3),\psi_{2n}(x_3),
\psi_{3n} (x_3))e^{{\rm i}(\alpha_{1n}x_1+\alpha_{2n}x_2)},
  \end{equation*}
  which yields that
  \[
    \|\bm{\psi}_{\Gamma_j}\|_{H_{\rm qper}^{-1/2}(\Gamma_j)}^2
    =L_1L_2\sum\limits_{n\in Z^2}(1+|\alpha_n|^2)^{-1/2}
    (|\psi_{1n}(b_j)|^2+|\psi_{2n}(b_j)|^2).
  \]
Using \eqref{Ceta} gives
  \[
    (1+|\alpha_n|^2)^{-1/2}(|\psi_{1n}(b_j)|^2+|\psi_{2n}(b_j)|^2)\leq I_1+I_2,
  \]
  where
  $$I_1=C(\de)\int_{b_2}^{b_1}(|\psi_{1n}(x_3)|^2+|\psi_{2n}(x_3)|^2){\rm
d}x_3$$
  and
  \begin{align*}
    I_2=&\frac{\de}{2}(1+|\alpha_n|^2)^{-1}\int_{b_2}^{b_1}
    (|\psi_{1n}'(x_3)|^2+|\psi_{2n}'(x_3)|^2){\rm d}x_3\\
    \leq&\de(1+|\alpha_n|^2)^{-1}\int_{b_2}^{b_1}\big[
    |\psi_{1n}'(x_3)-{\rm i}\alpha_{1n}\psi_{3n}(x_3)|^2
    +|\psi_{2n}'(x_3)-{\rm i}\alpha_{2n}\psi_{3n}(x_3)|^2\\
    &+(\alpha_{1n}^2+\alpha_{2n}^2)|\psi_{3n}(x_3)|^2\big]{\rm d}x_3\\
\leq&\de\int_{b_2}^{b_1}\big[|\psi_{1n}'(x_3)-{\rm i}\alpha_{1n}\psi_{3n}
(x_3)|^2+|\psi_{2n}'(x_3)-{\rm i}\alpha_{2n}\psi_{3n}(x_3)|^2
    +|\psi_{3n}(x_3)|^2\big]{\rm d}x_3,
  \end{align*}
  which completes the proof after summing over $n\in\mathbb{Z}^2$.
\end{proof}

The following lemma gives an estimate for quasi-periodic
divergence-free functions.

\begin{lemma}\label{L-div}
Let $\bm{v}=(v_{1n}(x_3),v_{2n}(x_3),v_{3n}(x_3))^\top
e^{{\rm i}(\alpha_{1n}x_1+\alpha_{2n}x_2)}$. Suppose ${\rm div}\, \bm{v}=0$.
Then for any $\de>0$, the following estimate holds:
\[
\bigg|\int_{\Ga_j}v_{3n}'\bar v_{3n}\bigg|\le |\al_n|^2(1+\de)
\norm{\bm{v}}_{L^{2}(\tilde\Om_j)^3}^2+C\de^{-1} d_j^{-2}
\|\bm{v}\|_{H({\rm curl},\tilde\Omega_j)}^2,\quad j=1,2,
\]
where $\tilde\Om_j=\Om_j'\setminus\Om_j$ and $d_j=|b_j-b_j'|.$
\end{lemma}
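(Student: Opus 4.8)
The plan is to reduce the surface integral to a scalar boundary term, convert that boundary term into a one–dimensional integral over the slab by inserting a cutoff, and then let the divergence constraint route each resulting term into either the sharp leading term or a harmless remainder. Since $\bm v$ is a single quasi-periodic Fourier mode, integrating out the tangential variables gives $\int_{\Ga_j}v_{3n}'\bar v_{3n}=L_1L_2\,v_{3n}'(b_j)\bar v_{3n}(b_j)$, so it suffices to control $v_{3n}'(b_j)\bar v_{3n}(b_j)$. Writing $I_j$ for the $x_3$-interval spanning $\tilde\Om_j$ (so $|I_j|=d_j$), I would take $\chi$ to be the linear function on $I_j$ with $\chi(b_j)=1$, $\chi(b_j')=0$, hence $|\chi'|\le d_j^{-1}$, and use the fundamental theorem of calculus:
\[
v_{3n}'(b_j)\bar v_{3n}(b_j)=\pm\int_{I_j}\frac{{\rm d}}{{\rm d}x_3}\big(\chi\,v_{3n}'\bar v_{3n}\big)\,{\rm d}x_3=\pm\int_{I_j}\Big(\chi'v_{3n}'\bar v_{3n}+\chi\,v_{3n}''\bar v_{3n}+\chi\,|v_{3n}'|^2\Big)\,{\rm d}x_3,
\]
the orientation sign being $+$ for $j=1$ and $-$ for $j=2$, which is irrelevant after taking absolute values.

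Next I would exploit the divergence-free structure. From ${\rm div}\,\bm v=0$ one gets $v_{3n}'=-{\rm i}(\al_{1n}v_{1n}+\al_{2n}v_{2n})$, so $|v_{3n}'|^2\le|\al_n|^2(|v_{1n}|^2+|v_{2n}|^2)$ by Cauchy–Schwarz, and the $\chi|v_{3n}'|^2$ term contributes at most $|\al_n|^2\int_{I_j}(|v_{1n}|^2+|v_{2n}|^2)$. Differentiating the divergence relation and eliminating $v_{1n}',v_{2n}'$ through the curl coefficients $(\na\times\bm v)_k=w_{kn}e^{{\rm i}(\al_{1n}x_1+\al_{2n}x_2)}$ yields the key identity $v_{3n}''=|\al_n|^2v_{3n}+{\rm i}(\al_{2n}w_{1n}-\al_{1n}w_{2n})$, whence $v_{3n}''\bar v_{3n}=|\al_n|^2|v_{3n}|^2+{\rm i}(\al_{2n}w_{1n}-\al_{1n}w_{2n})\bar v_{3n}$. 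The piece $\chi|\al_n|^2|v_{3n}|^2$ contributes at most $|\al_n|^2\int_{I_j}|v_{3n}|^2$; added to the $\chi|v_{3n}'|^2$ bound and restoring the factor $L_1L_2$, these assemble exactly the clean leading term $|\al_n|^2\|\bm v\|_{L^2(\tilde\Om_j)^3}^2$.

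The three remaining pieces are lower order and are absorbed by Young's inequality, which produces the slack $\de$ and the constant $C\de^{-1}d_j^{-2}$. For the curl cross term, bounded by $C|\al_n||w_n||v_{3n}|$, I would split it as $\tfrac{\de}{2}|\al_n|^2|v_{3n}|^2+C\de^{-1}|w_n|^2$; for the $\chi'$ term, bounded by $d_j^{-1}|\al_n|(|v_{1n}|^2+|v_{2n}|^2)^{1/2}|v_{3n}|$, I would split it as $\tfrac{\de}{2}|\al_n|^2(|v_{1n}|^2+|v_{2n}|^2)+C\de^{-1}d_j^{-2}|v_{3n}|^2$. Integrating over $I_j$ and multiplying by $L_1L_2$, the two $\tfrac{\de}{2}|\al_n|^2$ contributions upgrade the leading constant from $1$ to $1+\de$, while $C\de^{-1}|w_n|^2$ and $C\de^{-1}d_j^{-2}|v_{3n}|^2$ combine into $C\de^{-1}d_j^{-2}\|\bm v\|_{H({\rm curl},\tilde\Om_j)}^2$; here I use that $d_j=|b_j-b_j'|<b_1-b_2$ is bounded above, so the unweighted curl term is dominated by the $d_j^{-2}$-weighted one. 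This gives precisely the claimed estimate.

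The one genuinely non-routine step is the re-expression of the second derivative $v_{3n}''$: using the divergence constraint once is not enough, one must differentiate it and feed in the curl coefficients so that $v_{3n}''$ splits into a $|\al_n|^2v_{3n}$ part, which builds the sharp leading constant, and a pure-curl part, which is absorbed. Everything else is bookkeeping with Cauchy–Schwarz and Young's inequality, together with care over the orientation sign in the two cases $j=1,2$.
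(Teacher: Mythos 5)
Your proposal is correct and follows essentially the same route as the paper: reduce the surface integral to the boundary value $v_{3n}'(b_j)\bar v_{3n}(b_j)$, insert a cutoff and apply the fundamental theorem of calculus over the slab, use the divergence constraint both to bound $|v_{3n}'|$ and (after differentiating it and substituting the curl components) to split $v_{3n}''$ into the sharp $|\al_n|^2 v_{3n}$ part plus a curl remainder, and finish with Young's inequality. The only difference is cosmetic and in your favor: you multiply the product $v_{3n}'\bar v_{3n}$ by a linear cutoff, so only $\chi'$ appears, whereas the paper sets $\bm{w}=\chi\bm{v}$ with a $C^2$ cutoff and must carry the extra $\chi''\chi$, $|\chi'|^2$, and mixed $\chi'\chi$ bookkeeping terms to reach the same estimate.
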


\begin{proof} We only prove the case of $j=1$ since the proof for $j=2$ is similar. Clearly, $v_{3n}'=-{\rm i}\al_{1n}v_{1n}-{\rm i}\al_{2n}v_{2n}$ and
\eqn{\norm{\na\times\bm{v}}_{L^2(\tilde\Om_1)^3}=L_1L_2\Big(&\norm{v_{1n}'-{\rm i}\al_{1n}v_{3n}}_{L^{2}(B_1)}^2+\norm{v_{2n}'-{\rm i}\al_{2n}v_{3n}}_{L^{2}(B_1)}^2\\
&+\norm{\al_{1n}v_{2n}-\al_{2n}v_{1n}}_{L^{2}(B_1)}^2\Big),}
where $B_1=[b_1',b_1]$. Let $\chi\in C^2(B_1)$ be a function satisfying
\eq{\label{chi}
\chi(x_3)\begin{cases}
=1, & x_3\in [b_1'+\frac23 d_1,b_1],\\
=0, &x_3\in [b_1',b_1'+\frac13 d_1],\\
\in [0,1], &x_3\in (b_1'+\frac13 d_1, b_1'+\frac23 d_1),
\end{cases}
 \quad |\chi'|\ls d_1^{-1},\text{ and }  |\chi''|\ls d_1^{-2}.}
Let $\bm{w}=\chi\bm{v}$.   We conclude that
\eqn{&\abs{v_{3n}'(b_1)\bar v_{3n}(b_1)}=\abs{w_{3n}'(b_1)\bar w_{3n}(b_1)}=\bigg|\int_{b_1'}^{b_1}\big(w_{3n}''\bar w_{3n}+|w_{3n}'|^2\big)\bigg|\\
=&\bigg|\int_{b_1'}^{b_1}\big((\chi v_{3n}''+2\chi'v_{3n}'+\chi'' v_{3n})\chi\bar v_{3n}+|\chi v_{3n}'+\chi' v_{3n}|^2\big)\bigg|\\
=&\bigg|\int_{b_1'}^{b_1}\big(\chi^2 v_{3n}''\bar v_{3n}+\chi^2|v_{3n}'|^2+\chi'\chi(3v_{3n}'\bar v_{3n}+v_{3n}\bar v_{3n}')+(\chi''\chi+|\chi'|^2)|v_{3n}|^2\big)\bigg|\db\\
=&\bigg|\int_{b_1'}^{b_1}\Big(\chi^2\big({\rm i}\al_{1n}({\rm i}\al_{1n}v_{3n}-v_{1n}')+{\rm i}\al_{2n}({\rm i}\al_{2n}v_{3n}-v_{2n}')\big)\bar v_{3n}+\chi^2|\al_n|^2|v_{3n}|^2\\
&+\chi^2|v_{3n}'|^2+\chi'\chi(3v_{3n}'\bar v_{3n}+v_{3n}\bar v_{3n}')+(\chi''\chi+|\chi'|^2)|v_{3n}|^2\Big)\bigg|\db\\
\le&(1+\de)\big(|\al_n|^2\norm{v_{3n}}_{L^2(B_1)}^2+\norm{v_{3n}'}_{L^2(B_1)}^2\big)\\
&+C\de^{-1}\big(\norm{{\rm i}\al_{1n}v_{3n}-v_{1n}'}_{L^2(B_1)}^2+\norm{{\rm i}\al_{2n}v_{3n}-v_{2n}'}_{L^2(B_1)}^2+d_1^{-2}\norm{v_{3n}}_{L^2(B_1)}^2\big)}
which together with $\norm{v_{3n}'}_{L^2(B_1)}^2\le
|\al_n|^2\big(\norm{v_{1n}}_{L^2(B_1)}^2+\norm{v_{2n}}_{L^2(B_1)}^2\big)$
implies that
\begin{align*}\bigg|\int_{\Ga_1}v_{3n}'\bar
v_{3n}\bigg|&=L_1L_2\abs{v_{3n}'(b_1)\bar v_{3n}(b_1)}\\
&\le |\al_n|^2(1+\de)\norm{\bm{v}}_{L^{2}(\tilde\Om_1)^3}^2+C\de^{-1} d_1^{-2}
\|\bm{v}\|_{H(\text{curl},\tilde\Omega_1)}^2,
\end{align*}
which completes the proof.
\end{proof}

\subsection{Estimates of \eqref{aTT} and \eqref{xiq}}

We first discuss the exponentially decay property of the evanescent modes.

Let $\bm{E}=(E_1, E_2, E_3)^\top$ be the solution to the variational problem
\eqref{vp}. Since $E_l(\bm{x})$ is a quasi-periodic function
with respect to $x_1$ and $x_2$, it has the following Fourier expansion:
\begin{equation}\label{Ej}
E_l(\bm{x})=\sum_{n\in\mathbb{Z}^2} E_{ln}(x_3)
e^{{\rm i}(\alpha_{1n}x_1+\alpha_{2n}x_2)},\quad l=1,2,3.
\end{equation}

The following lemma is crucial to derive the truncation error.

\begin{lemma}\label{expo deca}
Let $\bm{E}=(E_1, E_2, E_3)^\top$ be the solution to \eqref{vp} and
$E_{ln}$ is the Fourier coefficient given in \eqref{Ej}. If ${\rm
Re}{\kappa}_j^2\leq |\alpha_n|^2$, then the following estimates hold:
\begin{align*}
&|E_{ln}(b_j)|\leq\big|E_{ln}(b_j')\big|e^{-d_j(|\alpha_n|^2-{\rm
Re}{\kappa}_j^2)^{1/2}},\quad l=1, 2,\\
&|E_{3n}(b_j)|\leq\frac{1}{|\beta_{jn}|}\big|\alpha_{1n}E_{1n}(b_j')+\alpha_{2n}E_{
2n}(b_j')\big|e^{-d_j(|\alpha_n|^2-{\rm Re}{\kappa}_j^2)^{1/2}},\\
&\abs{\alpha_{1n}E_{2n}(b_j)-\alpha_{2n}E_{1n}(b_j)}\le\abs{\alpha_{1n}E_{2n}
(b_j')-\alpha_{2n}E_{1n}(b_j')}e^{-d_j(|\alpha_n|^2-{\rm
Re}{\kappa}_j^2)^{1/2}}.
\end{align*}
\end{lemma}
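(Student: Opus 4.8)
The plan is to exploit that $\tilde\Om_j=\Om_j'\setminus\Om_j$ lies inside the homogeneous region $\Om_j'$, so that on this slab the field solves a constant-coefficient Maxwell system and reduces, mode by mode, to a scalar second-order ODE in $x_3$. First I would take the divergence of \eqref{mee} to obtain ${\rm div}(\ep\bm{E})=0$; since $\ep\equiv\ep_j$ and $\mu\equiv\mu_j$ are constant on $\Om_j'$, this gives ${\rm div}\,\bm{E}=0$ there, and the identity $\na\times(\na\times\bm{E})=\na({\rm div}\,\bm{E})-\Delta\bm{E}$ turns \eqref{mee} into the vector Helmholtz equation $\Delta\bm{E}+\kappa_j^2\bm{E}=0$ on $\tilde\Om_j$. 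Substituting the Fourier expansion \eqref{Ej} then yields, for every $n$ and every $l=1,2,3$, the ordinary differential equation $E_{ln}''(x_3)+\be_{jn}^2E_{ln}(x_3)=0$ with $\be_{jn}^2=\kappa_j^2-|\al_n|^2$ as in \eqref{beta}.

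Next I would invoke the radiation condition. Because the Rayleigh expansion that defines the outgoing field is valid throughout the homogeneous half-space $\Om_j'$ (not merely on $\Om_j$), and because for an evanescent index ${\rm Re}\,\kappa_j^2\le|\al_n|^2$ the single propagating incident mode contributes nothing, only the decaying exponential survives: $E_{ln}(x_3)=E_{ln}(b_j')\,e^{{\rm i}\be_{jn}(x_3-b_j')}$ for $j=1$ and $E_{ln}(x_3)=E_{ln}(b_j')\,e^{-{\rm i}\be_{jn}(x_3-b_j')}$ for $j=2$. Evaluating at $x_3=b_j$ and using $d_j=|b_j-b_j'|$ gives $|E_{ln}(b_j)|=|E_{ln}(b_j')|\,e^{-d_j{\rm Im}\,\be_{jn}}$. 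The stated rate then follows from the elementary bound ${\rm Im}\,\be_{jn}\ge(|\al_n|^2-{\rm Re}\,\kappa_j^2)^{1/2}$, which I would check by writing $\be_{jn}=p+{\rm i}q$ with $q\ge 0$, so that $p^2-q^2={\rm Re}\,\kappa_j^2-|\al_n|^2\le 0$ forces $q\ge(|\al_n|^2-{\rm Re}\,\kappa_j^2)^{1/2}$. This proves the first estimate; the third is then immediate, since $\al_{1n}E_{2n}-\al_{2n}E_{1n}$ is a fixed linear combination of $E_{1n}$ and $E_{2n}$, which share the common $x_3$-profile $e^{\pm{\rm i}\be_{jn}x_3}$.

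For the $E_{3n}$ bound I would use the divergence-free relation in Fourier form, ${\rm i}\al_{1n}E_{1n}+{\rm i}\al_{2n}E_{2n}+E_{3n}'=0$. Since $E_{3n}(x_3)=c\,e^{\pm{\rm i}\be_{jn}x_3}$ we have $E_{3n}'=\pm{\rm i}\be_{jn}E_{3n}$, whence $E_{3n}=\mp\be_{jn}^{-1}(\al_{1n}E_{1n}+\al_{2n}E_{2n})$; evaluating at $b_j$ and inserting the already-proved decay of $E_{1n}$ and $E_{2n}$ delivers the factor $|\be_{jn}|^{-1}$ together with the exponential. The main obstacle is the radiation-condition step: one must justify that on the buffer slab $\tilde\Om_j$, which sits between the artificial boundary $\Gamma_j$ and $\Gamma_j'$ rather than in the far field, the field truly contains only the outgoing mode $e^{\pm{\rm i}\be_{jn}x_3}$ with no admixture of the growing solution and, for the evanescent indices, no incident contribution. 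This is precisely what extending the Rayleigh expansion from $\Om_j$ to all of $\Om_j'$ secures, and it is the place where the homogeneity of the medium on $\Om_j'$ and the normalization ${\rm Im}\,\be_{jn}\ge 0$ must be used together.
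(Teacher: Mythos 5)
Your proposal is correct and takes essentially the same route as the paper's proof: reduce Maxwell's equations to the per-mode ODE $E_{ln}''+(\kappa_j^2-|\alpha_n|^2)E_{ln}=0$ in the homogeneous region, use the radiation condition to retain only the outgoing exponential, handle $E_{3n}$ via the Fourier form of the divergence-free condition, and bound ${\rm Im}\,\beta_{jn}\ge(|\alpha_n|^2-{\rm Re}\,\kappa_j^2)^{1/2}$. The only differences are cosmetic and harmless: you evaluate the divergence relation at $b_j$ rather than $b_j'$, you obtain the imaginary-part bound from ${\rm Re}\,\beta_{jn}^2\le 0$ with $\beta_{jn}=p+{\rm i}q$, $q\ge 0$, instead of the paper's explicit square-root formula, and your explicit observation that the evanescence hypothesis ${\rm Re}\,\kappa_j^2\le|\alpha_n|^2$ excludes the incident mode is a point the paper leaves implicit.
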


\begin{proof}
Since $\mu$ and $\varepsilon$ are constants in $\Omega_1^{'}$ and
$\Omega_2^{'}$, the Maxwell equation \eqref{mee} reduces to the Helmholtz
equations:
\begin{equation}\label{he}
\Delta E_l+\kappa^2_j E_l=0 \quad\text{in}\ \Omega_j^{'},\, l=1,2,3.
\end{equation}
Plugging \eqref{Ej} into \eqref{he}, we derive the second order ordinary
differential equation for the Fourier coefficient $E_{ln}$:
\begin{equation}\label{ode}
E_{ln}^{''}(x_3)+(\kappa^2_j-|\alpha_n|^2)E_{ln}(x_3)=0,\quad
x_3\not\in (b_2',b_1').
\end{equation}
The general solution of \eqref{ode} is
\[
E_{ln}(x_3)=A_{ln}e^{{\rm i}\beta_{jn}x_3}+B_{ln}e^{-{\rm
i}\beta_{jn}x_3}.
\]
Noting \eqref{beta} and using the radiation condition, we obtain
\begin{equation}\label{Ejz}
\begin{cases}
E_{ln}(x_3)=E_{ln}(b_1')e^{{\rm i}\beta_{1n}(x_3-b_1')}, &\quad
x_3>b_1',\\
E_{ln}(x_3)=E_{ln}(b_2')e^{-{\rm i}\beta_{2n}(x_3-b_2')}, &\quad
x_3<b_2',
\end{cases}
\end{equation}
which gives that
\begin{align*}
|E_{ln}(b_j)|&=|E_{ln}(b_j')| e^{-d_j{\rm Im}\beta_{jn}},\quad
l=1,2,\\
\abs{\alpha_{1n}E_{2n}(b_j)-\alpha_{2n}E_{1n}(b_j)}
&=\abs{\alpha_{1n}E_{2n}(b_j')-\alpha_{2n}E_{1n}(b_j')}e^{-d_j{\rm
Im}\beta_{jn}}.
\end{align*}

On the other hand, it follows from the divergence free condition that
\[
\partial_{x_1} E_1+\partial_{x_2}E_2+\partial_{x_3}E_3=0\quad\text{in}\
\Omega_j',
\]
which yields
\begin{equation}\label{dfc}
E_{3n}'(b_j')+{\rm i}\alpha_{1n}E_{1n}(b_j')+{\rm
i}\alpha_{2n}E_{2n}(b_j')=0.
\end{equation}
It follows from \eqref{Ejz} that
\[
E_{3n}'(x_3)={\rm
i}\beta_{1n}E_{3n}(b_1')e^{{\rm i}\beta_{1n}(x_3-b_1')},\quad x_3> b_1'.
\]
Evaluating the above equation at $x_3=b_1'$ and using the divergence free
condition \eqref{dfc}, we have
\[
E_{3n}(b_1')=-\frac{\alpha_{1n}}{\beta_{1n}}E_{1n}(b_1')-\frac{\alpha_{
2n}}{\beta_{1n}}E_{2n}(b_1'),
\]
which implies
\[
E_{3n}(x_3)=\frac{-1}{\beta_{1n}}\big(\alpha_{1n}E_{1n}(b_1')+\alpha_{2n}E_{
2n}(b_1')\big)e^{{\rm i}\beta_{1n}(x_3-b_1')}.
\]
Taking $x_3=b_1$ in the above equation yields
\[
|E_{3n}(b_1)|= \frac{1}{|\beta_{1n}|}\abs{\alpha_{1n}E_{1n}(b_1')
  +\alpha_{2n}E_{2n}(b_1')}e^{-d_1{\rm Im} \beta_{1n}}.
\]
Similarly, we can obtain
\[
|E_{3n}(b_2)|= \frac{1}{|\beta_{2n}|}\abs{\alpha_{1n}E_{1n}(b_2')
  +\alpha_{2n}E_{2n}(b_2')}e^{-d_2{\rm Im} \beta_{2n}}.
\]

Using \eqref{beta} again, we have
\eqn{
  {\rm Im}\beta_{jn}^2={\rm
Im}(\kappa_j^2-|\alpha_n|^2)
  ={\rm Im}\kappa_j^2=\omega^2\mu_j{\rm Im}\varepsilon_j\geq 0
}
and
\eqn{
{\rm Im}\beta_{jn}&=\frac{1}{\sqrt{2}}\left[\Big(\big({\rm
Im}\beta_{jn}^2\big)^2+\big({\rm Re}\beta_{jn}^2\big)^2\Big)^{1/2}-{\rm
Re}\beta_{jn}^2\right]^{1/2}\\
&\geq(-{\rm
Re}\beta_{jn}^2)^{1/2}=(\alpha_n^2-{\rm Re}\kappa_j^2)^{1/2},
}
which completes the proof.
\end{proof}

Noting that $\Omega$ is a cuboid, we have the following Hodge decomposition: For
any $\bm{\psi}\in H_{\text{qper}}(\text{curl},\Omega)$,
there exist $\bm{\psi}^{(1)}\in H^1(\Omega)^3$ and
$\psi^{(2)}\in H^1(\Omega)$ such that
\[
\bm{\psi}=\bm{\psi}^{(1)}+\nabla\psi^{(2)},\quad\nabla\cdot\bm{\psi}^{(1)}=0.
\]

Let $\Pi_h: H^1(\Omega)\rightarrow U_h$
be the Scott--Zhang interpolation operator. For any element
$T\in\mathcal{M}_h$ with the size of $h_T$
and any face $F\in\mathcal{F}_h$ with the size of $h_F$, one has
\begin{align*}
  \|\psi^{(2)}-\Pi_h\psi^{(2)}\|_{L^2(T)}
  &\leq Ch_{T}|\psi^{(2)}|_{H^1(\tilde{T})},\\
  \|\psi^{(2)}-\Pi_h\psi^{(2)}\|_{L^2(F)}
  &\leq Ch_{F}^{1/2}|\psi^{(2)}|_{H^1(\tilde{F})}.
\end{align*}
Here $\tilde{T}$ and $\tilde{F}$ are the union of all the elements in
$\mathcal{M}_h$, which have nonempty
intersection with the element $T$ and the face $F$, respectively.

\begin{lemma}
  There exists a linear projection operator
  $\mathscr{P}_h: H_{{\rm qper}}({\rm curl},
\Omega)\cap H^1(\Omega)^3\rightarrow V_h$
  satisfying the following estimates:
  \begin{align*}
    &\|\mathscr{P}_h\bm{\psi}^{(1)}\|_{L^2(T)^3}\leq
C(\|\bm{\psi}^{(1)}\|_{L^2(\tilde{T})^3}
    +h_T{|\bm{\psi}^{(1)}|}_{H^1(\tilde{T})^3}),\\
    &\|\bm{\psi}^{(1)}-\mathscr{P}_h\bm{\psi}^{(1)}\|_{L^2(T)^3}
    \leq Ch_T\|\bm{\psi}^{(1)}\|_{H^1(\tilde{T})^3},\\
    &\|\bm{\psi}^{(1)}-\mathscr{P}_h\bm{\psi}^{(1)}\|_{L^2(F)^3}
    \leq Ch_F^{1/2}\|\bm{\psi}^{(1)}\|_{H^1(\tilde{F})^3}.
\end{align*}
\end{lemma}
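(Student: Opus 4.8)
The plan is to construct the operator $\mathscr{P}_h$ explicitly from the standard Nédélec interpolation operator, but regularized so that it is well-defined on $H^1(\Omega)^3$ (the canonical Nédélec interpolant requires more regularity than $H(\mathrm{curl})$). The natural candidate is a Clément/Scott--Zhang-type averaging version of the edge interpolant: for each edge $e$ of the mesh one assigns the degree of freedom by averaging the tangential component of $\bm{\psi}^{(1)}$ over a small patch of elements surrounding $e$, rather than by an exact edge integral of the pointwise trace. Because $\bm{\psi}^{(1)}\in H^1(\Omega)^3$, its trace on edges is not available in $H(\mathrm{curl})$ alone, so this regularization by local $L^2$-averaging is essential. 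I would verify first that this $\mathscr{P}_h$ maps into $V_h\subset H_{\mathrm{qper}}(\mathrm{curl},\Omega)$, taking care that the averaging respects the periodicity of the mesh (which was assumed) so that the quasi-periodic boundary conditions are preserved.

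The three estimates are then the standard trio for any quasi-local interpolation operator. First I would establish the $L^2$-stability bound $\|\mathscr{P}_h\bm{\psi}^{(1)}\|_{L^2(T)^3}\leq C(\|\bm{\psi}^{(1)}\|_{L^2(\tilde T)^3}+h_T|\bm{\psi}^{(1)}|_{H^1(\tilde T)^3})$ by expressing $\mathscr{P}_h\bm{\psi}^{(1)}|_T$ through its edge degrees of freedom, bounding each averaged degree of freedom by the $L^2$-norm of $\bm{\psi}^{(1)}$ over the averaging patch via Cauchy--Schwarz and a trace inequality, and then using an inverse estimate together with shape-regularity to pass from the degrees of freedom back to the local $L^2$-norm; the $h_T$-weighted gradient term appears precisely because the trace inequality on a patch of diameter $\sim h_T$ contributes a factor $h_T|\bm{\psi}^{(1)}|_{H^1}$ after scaling. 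The two approximation estimates follow from the first-order polynomial reproduction property of $\mathscr{P}_h$ (it reproduces constants, or the relevant linear fields in the Nédélec space) combined with a Bramble--Hilbert argument on the reference element, then a standard scaling back to the physical element of size $h_T$, giving the interior bound $\|\bm{\psi}^{(1)}-\mathscr{P}_h\bm{\psi}^{(1)}\|_{L^2(T)^3}\leq Ch_T\|\bm{\psi}^{(1)}\|_{H^1(\tilde T)^3}$ and, via a scaled trace inequality on $F$, the face bound with the characteristic $h_F^{1/2}$ weight.

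The main obstacle I anticipate is not any single estimate in isolation but the construction of $\mathscr{P}_h$ so that all three requirements hold simultaneously while respecting the quasi-periodic structure. In particular, the averaging patches for edges lying on the periodic boundaries $\Gamma_{l0},\Gamma_{l1}$ must be identified across the period (using the mesh periodicity) so that $\mathscr{P}_h\bm{\psi}^{(1)}$ genuinely lies in $H_{\mathrm{qper}}(\mathrm{curl},\Omega)$ and does not pick up spurious jumps; handling the phase factors $e^{\pm\mathrm{i}\alpha_jL_j}$ correctly in the degrees of freedom is the delicate bookkeeping. A secondary technical point is that first-order polynomial reproduction for the lowest-order Nédélec space must be checked in the averaged version, since the naive Clément operator only reproduces constants and one must ensure enough reproduction to obtain the full $O(h_T)$ rate; this is why I would use a Scott--Zhang-style construction (with a dual basis chosen to reproduce the local Nédélec polynomials) rather than a plain Clément average. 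Once reproduction and the patch-locality of $\tilde T,\tilde F$ are secured, the remaining scaling arguments are entirely routine.
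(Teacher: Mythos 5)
The paper offers no proof of this lemma to compare against: it is stated as a known result, of the kind constructed in the residual-based a posteriori error literature for edge elements (Cl\'ement/Scott--Zhang-type quasi-interpolation for N\'ed\'elec spaces, as used also in \cite{BLW10}, whose analysis this paper follows). Your proposal therefore supplies a proof rather than parallels one, and the route you take is the standard one; I see no gap in it. Your starting observation is the real crux: edge traces of $H^1(\Omega)^3$ fields do not exist in three dimensions, so the canonical edge interpolant is unbounded on this space and averaged degrees of freedom are unavoidable. Given that, the three estimates follow essentially as you say: Cauchy--Schwarz plus a scaled trace inequality on the averaging patches gives the $L^2$-stability with the $h_T$-weighted gradient term, and reproduction plus Bramble--Hilbert plus scaling gives the interior bound, with the face bound then following from the interior bound, the stability bound, and a scaled (discrete) trace inequality. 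Two refinements to your plan. First, your concern that plain Cl\'ement averaging reproduces too little is overcautious: because the right-hand sides of the claimed estimates carry the full $\|\cdot\|_{H^1(\tilde{T})^3}$ norm at rate $O(h_T)$, local reproduction of constants --- and constant fields lie in the local N\'ed\'elec space $\bm{a}_T+\bm{b}_T\times\bm{x}$ --- combined with the stability bound and a Poincar\'e inequality on the patch already yields both approximation estimates; the Scott--Zhang-style dual basis is still worth using, but its purpose is to make $\mathscr{P}_h$ an actual projection onto $V_h$, not to raise the approximation order. Second, the quasi-periodic bookkeeping you flag is indeed the only ingredient here that goes beyond the textbook construction; the clean way to organize it is to work on the periodically extended mesh (available by the paper's periodic-mesh assumption) with the quasi-periodic extension of $\bm{\psi}^{(1)}$, so that every patch argument is purely local and the interpolant automatically lands in $H_{\rm qper}({\rm curl},\Omega)$, the phase factors $e^{\pm{\rm i}\alpha_l L_l}$ entering only through the identification of boundary degrees of freedom.
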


Define $\bm{\psi}_h^{(1)}={\mathscr{P}_h}\bm{\psi}^{(1)},
\psi_h^{(2)}={\Pi_h}\psi^{(2)},
\bm{\psi}_h=\bm{\psi}_h^{(1)}+\nabla\psi_h^ {(2)}$.

\begin{lemma}\label{att}
There exists an integer $N_{j1}$ independent of $h$ and satisfying
  {$\big(\frac{2\pi N_{j1}}{\LL}\big)^2$ $>{\rm Re}\kappa_j^2$}, $j=1,2$
  such that for any $N_j\geq N_{j1}$ and
  $\bm{\psi}\in H_{{\rm qper}}({\rm curl},\Omega)$ the following estimate holds:
  \begin{align*}
  &\Big|a(\bm{\xi}, \bm{\psi})+{\rm i}\omega\sum_{j=1}^2\int_{\Gamma_j}
  (\mathscr{T}_j-\mathscr{T}_j^{N_j})
  \bm{\xi}_{\Gamma_j}\cdot\bar{\bm\psi}_{\Gamma_j}\Big|\\
  \leq& C_1\Bigg(\bigg(\sum_{T\in\mathcal{M}_h}\eta_T^2\bigg)^{1/2}+\sum_{j=1}^2 e^{-d_j\si_j}\|\bm{E}^{\rm
inc}\|_{TL^2(\Gamma_1)}\Bigg)\|\bm{\psi}\|_{H({\rm curl},\Omega)},
  \end{align*}
where $\si_j=\big(\big(\frac{2\pi N_j}{\LL}\big)^2-{\rm
Re}\kappa_j^2\big)^{1/2}$. Moreover,
\eqn{|(\ep\bm{\xi},\na q)|\ls\bigg(\sum_{T\in\mathcal{M}_h}\eta_T^2\bigg)^{1/2}\norm{\na q}_{L^2(\Om)^2}.}
\end{lemma}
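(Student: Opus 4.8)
The starting point is the error representation \eqref{aTT}, valid for every $\bm{\psi}_h\in V_h$, which rewrites the quantity to be estimated as
\[
\langle\bm{f},\bm{\psi}-\bm{\psi}_h\rangle_{\Gamma_1}-a_N(\bm{E}_h^N,\bm{\psi}-\bm{\psi}_h)+{\rm i}\omega\sum_{j=1}^2\int_{\Gamma_j}(\mathscr{T}_j-\mathscr{T}_j^{N_j})\bm{E}_{\Gamma_j}\cdot\bar{\bm\psi}_{\Gamma_j}.
\]
The plan is to bound the Galerkin-type residual (the first two terms) by the local estimators and the DtN truncation term (the last term) by the exponentially small quantity, then add. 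Throughout I fix $N_{j1}$ so that $\big(\frac{2\pi N_{j1}}{\LL}\big)^2>{\rm Re}\kappa_j^2$, which is precisely the condition that puts every mode surviving the truncation into the evanescent regime.

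For the residual term I would use the Hodge decomposition $\bm{\psi}=\bm{\psi}^{(1)}+\na\psi^{(2)}$ with $\na\cdot\bm{\psi}^{(1)}=0$ and set $\bm{\psi}_h=\mathscr{P}_h\bm{\psi}^{(1)}+\na\Pi_h\psi^{(2)}$, so that $\bm{\psi}-\bm{\psi}_h=(\bm{\psi}^{(1)}-\mathscr{P}_h\bm{\psi}^{(1)})+\na(\psi^{(2)}-\Pi_h\psi^{(2)})$. Inserting the definition \eqref{tsf} of $a_N$ and integrating by parts element by element, the curl--curl contribution tested against the first (solenoidal) summand produces the volume residuals $R_T^{(1)}$ and the tangential jumps $J_F^{(1)}$ across interior, boundary ($\Gamma_1,\Gamma_2$), and periodic ($\Gamma_{l0},\Gamma_{l1}$, carrying the phase factors $e^{\pm{\rm i}\al_jL_j}$) faces, while the gradient summand produces the divergence residuals $R_T^{(2)}$ and the normal jumps $J_F^{(2)}$; the contribution of $\langle\bm f,\cdot\rangle_{\Gamma_1}$ is absorbed into the $\Gamma_1$ boundary residuals already defined. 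The interpolation estimates for $\mathscr{P}_h$ and $\Pi_h$, Cauchy--Schwarz, and the finite overlap of the patches $\tilde T,\tilde F$ then give $C(\sum_T\eta_T^2)^{1/2}\|\bm{\psi}\|_{H({\rm curl},\Omega)}$. This is the standard residual machinery for edge elements and is routine.

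The crux is the truncation term. Expanding $\bm{E}_{\Gamma_j}$ and $\bm{\psi}_{\Gamma_j}$ in Fourier series, only the modes $n\notin U_{N_j}$ (i.e. $|\al_n|>\frac{2\pi}{\LL}N_j$) survive, and for these $N_{j1}$ forces ${\rm Re}\kappa_j^2\le|\al_n|^2$, so Lemma \ref{expo deca} applies and yields the per-mode decay of the traces $E_{1n}(b_j),E_{2n}(b_j)$ at rate $e^{-d_j(|\al_n|^2-{\rm Re}\kappa_j^2)^{1/2}}\le e^{-d_j\si_j}$. I would pair against $\bar{\bm\psi}_{\Gamma_j}$ by a weighted Cauchy--Schwarz with weights $(1+|\al_n|^2)^{\pm1/2}$: the $\bm\psi$ factor is then the $TH^{-1/2}_{\rm qper}({\rm curl},\Gamma_j)$ norm, controlled by $\ga_0\|\bm\psi\|_{H({\rm curl},\Omega)}$ through Lemma \ref{tt}, and the $\bm E$ factor is $\sum_{n\notin U_{N_j}}(1+|\al_n|^2)^{1/2}\big(|r_{1n}^{(j)}|^2+|r_{2n}^{(j)}|^2\big)$. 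The main obstacle lies here: from \eqref{tbc} and \eqref{beta} one has $|r_{ln}^{(j)}|\ls|\al_n|\big(|E_{1n}(b_j)|+|E_{2n}(b_j)|\big)$, so each term carries an algebraic growth $|\al_n|^3$, and I must both absorb this growth into the per-mode decay $e^{-2d_j(|\al_n|^2-{\rm Re}\kappa_j^2)^{1/2}}$ and convert the unknown interface coefficients $E_{ln}(b_j')$ into the data. For the former, writing $|\al_n|^3e^{-2d_j(\ldots)}=\big[|\al_n|^3(1+|\al_n|^2)^{1/2}e^{-2d_j(\ldots)}\big](1+|\al_n|^2)^{-1/2}$ and bounding the bracket by its supremum over the truncated modes factors out $e^{-2d_j\si_j}$ up to an algebraic prefactor in $N_j$, which is harmless since $\si_j$ grows linearly in $N_j$. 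For the latter, the residual weighted sum $\sum(1+|\al_n|^2)^{-1/2}|E_{ln}(b_j')|^2$ is the $H^{-1/2}_{\rm qper}(\Gamma_j')$ trace of $\bm E$, bounded by an interior trace estimate as in Lemma \ref{tr} and the well-posedness \eqref{isc} of \eqref{vp}, hence by $\|\bm{E}^{\rm inc}\|_{TL^2(\Gamma_1)}$. Taking square roots and summing over $j=1,2$ then yields the claimed $\sum_j e^{-d_j\si_j}\|\bm{E}^{\rm inc}\|_{TL^2(\Gamma_1)}$.

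For the final inequality I would begin from \eqref{xiq}, namely $(\ep\bm\xi,\na q)=-(\ep\bm E_h^N,\na(q-q_h))$, choose $q_h=\Pi_h q\in U_h\cap H_0^1(\Om)$, and integrate by parts element by element. The volume terms reproduce $\na\cdot(\ep\bm E_h^N)=-R_T^{(2)}/\omega^2$ and the face terms reproduce the normal jumps $J_F^{(2)}$ on interior, boundary, and periodic faces. The interpolation bounds $\|q-\Pi_h q\|_{L^2(T)}\ls h_T|q|_{H^1(\tilde T)}$ and $\|q-\Pi_h q\|_{L^2(F)}\ls h_F^{1/2}|q|_{H^1(\tilde F)}$, Cauchy--Schwarz, and $|q|_{H^1(\Om)}=\|\na q\|_{L^2(\Om)}$ then give $|(\ep\bm\xi,\na q)|\ls(\sum_T\eta_T^2)^{1/2}\|\na q\|_{L^2(\Om)^2}$. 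This parallels the divergence-residual estimate of the previous paragraph and is likewise routine.
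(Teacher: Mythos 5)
Your treatment of the Galerkin residual term (Hodge decomposition, elementwise integration by parts, interpolation estimates for $\mathscr{P}_h$ and $\Pi_h$) and of $(\ep\bm{\xi},\na q)$ via \eqref{xiq} coincides with the paper's and is fine. The gap is in the truncation term $J_2$. Your crude componentwise bound $|r_{ln}^{(j)}|\ls|\al_n|\big(|E_{1n}(b_j)|+|E_{2n}(b_j)|\big)$ puts all of the $\al_n$-growth on the $\bm{E}$ side, and your subsequent step of extracting the supremum of the algebraic-times-exponential factor does not deliver the stated rate. Concretely, your $\bm{E}$-side factor is controlled only by
\[
\sum_{n\notin U_{N_j}}(1+|\al_n|^2)^{1/2}|r_{ln}^{(j)}|^2
\ls \Big(\sup_{n\notin U_{N_j}}|\al_n|^3(1+|\al_n|^2)^{1/2}
e^{-2d_j(|\al_n|^2-{\rm Re}\kappa_j^2)^{1/2}}\Big)
\sum_{n\notin U_{N_j}}(1+|\al_n|^2)^{-1/2}\big(|E_{1n}(b_j')|^2+|E_{2n}(b_j')|^2\big),
\]
and the supremum over the tail is of order $N_j^4e^{-2d_j\si_j}$, not $e^{-2d_j\si_j}$. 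After square roots you get $N_j^2e^{-d_j\si_j}$, and $N_j^2e^{-d_j\si_j}$ is \emph{not} bounded by $Ce^{-d_j\si_j}$ with $C$ independent of $N_j$; the "harmless prefactor" argument only salvages $Ce^{-\theta d_j\si_j}$ for some fixed $\theta<1$. Since the constant $C_1$ in the lemma (and $C$ in Theorem \ref{mt}) must be independent of the truncation parameters for the estimate to be meaningful, what you prove is a strictly weaker statement with a degraded exponential rate.

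The missing idea is the exact algebraic regrouping the paper performs \emph{before} any estimation: using \eqref{tbc},
\[
r_{1n}^{(j)}\bar{\psi}_{1n}+r_{2n}^{(j)}\bar{\psi}_{2n}
=\frac{1}{\omega\mu_j\beta_{jn}}\Big[\kappa_j^2\big(E_{1n}\bar{\psi}_{1n}+E_{2n}\bar{\psi}_{2n}\big)
-\big(\al_{1n}E_{2n}-\al_{2n}E_{1n}\big)\big(\al_{1n}\bar{\psi}_{2n}-\al_{2n}\bar{\psi}_{1n}\big)\Big],
\]
together with the choice of $N_{j1}$ guaranteeing $|\mu_j\beta_{jn}|\gtrsim(1+|\al_n|^2)^{1/2}$ for $n\notin U_{N_j}$. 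This splits the quadratic growth in $\al_n$ symmetrically between $\bm{E}$ and $\bm{\psi}$: the surface-curl combination $\al_{1n}E_{2n}(b_j)-\al_{2n}E_{1n}(b_j)$ decays exponentially by the \emph{third} estimate of Lemma \ref{expo deca} (which your argument never invokes — it exists precisely for this purpose), and after Cauchy--Schwarz each factor is exactly a piece of the $TH_{\rm qper}^{-1/2}({\rm curl})$ norm density of $\bm{E}$ at $\Gamma_j'$ and of $\bm{\psi}$ at $\Gamma_j$ as in \eqref{normcurlgamma}. Lemma \ref{tt} and the inf-sup bound $\|\bm{E}\|_{H({\rm curl},\Omega)}\ls\|\bm{E}^{\rm inc}\|_{TL^2(\Gamma_1)}$ then yield $|J_2|\ls\sum_{j=1}^2 e^{-d_j\si_j}\|\bm{E}^{\rm inc}\|_{TL^2(\Gamma_1)}\|\bm{\psi}\|_{H({\rm curl},\Omega)}$ with no polynomial loss, which is what the lemma asserts.
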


\begin{proof}
The second estimate is a direct consequence of \eqref{xiq} with
$q_h=\Pi_hq$. It remains to prove the first estimate. Define
  \begin{align*}
    J_1&=\langle\bm{f},
\bm{\psi}-\bm{\psi}_h\rangle_{\Gamma_1}-a_N(\bm{E}_h^N,
\bm{\psi}-\bm{\psi}_h),\\
    J_2&={\rm i}\omega\sum_{j=1}^2\int_{\Gamma_j}
    (\mathscr{T}_j-\mathscr{T}_j^{N_j})
    \bm{E}_{\Gamma_j}\cdot\bar{\bm\psi}_{\Gamma_j}.
  \end{align*}
A simple calculation yields that
\[
a(\bm{\xi}, \bm{\psi})+{\rm
i}\omega\sum_{j=1}^2\int_{\Gamma_j}(\mathscr{T}_j-\mathscr{T}_j^{N_j})\bm{\xi}_{
\Gamma_j}\cdot\bar{\bm\psi}_{\Gamma_j}:= J_1+J_2,
  \]
where
\begin{align*}
  J_1=&\langle\bm{f}, \bm{\psi}-\bm{\psi}_h\rangle_{\Gamma_1}-a_N(\bm{E}_h^N,
\bm{\psi}-\bm{\psi}_h)\\
  =&-\int_{\Omega}\mu^{-1}(\nabla\times\bm{E}_h^N)\cdot\nabla\times
(\bar{\bm\psi}-\bar{\bm\psi}_h)+\omega^2\int_{\Omega}\varepsilon\bm{E}
_h^N\cdot(\bar{\bm\psi}-\bar{\bm\psi}_h )\\
&+{\rm
i}\omega\sum_{j=1}^2\int_{\Gamma_j}\mathscr{T}_j^{N_j}(\bm{E}_h^N)_{\Gamma_j
}\cdot(\bar{\bm\psi}-\bar{\bm\psi}_h)_{\Gamma_j}+\langle\bm{f},
\bm{\psi}-\bm{\psi}_h\rangle_{\Gamma_1}\\
=&J_1^1+J_1^2+J_1^3+J_1^4.
\end{align*}
Using the fact $\nabla\times \nabla(\bar{\psi}^{(2)}-\bar{\psi}_h^{(2)})=0$
and Green's theorem yields
\begin{align*}
  J_1^1=&-\int_{\Omega}\mu^{-1}(\nabla\times\bm{E}_h^N)\cdot\nabla\times
  (\bar{\bm\psi}-\bar{\bm\psi}_h)\\
       =&-\int_{\Omega}\mu^{-1}(\nabla\times\bm{E}_h^N)\cdot\nabla\times
(\bar{\bm\psi}^{(1)}-\bar{\bm\psi}_h^{(1)}+\nabla\bar{\psi}^{(2)}
-\nabla\bar{\psi}_h^{(2)})\\
=&-\int_{\Omega}\mu^{-1}(\nabla\times\bm{E}_h^N)\cdot\nabla\times
     (\bar{\bm\psi}^{(1)}-\bar{\bm\psi}_h^{(1)})\\
  =&-\sum_{T\in\mathcal{M}_h}\int_T \mu^{-1}
  (\nabla\times\bm{E}_h^N)\cdot\nabla\times
     (\bar{\bm\psi}^{(1)}-\bar{\bm\psi}_h^{(1)})\\
  =&-\sum\limits_{T\in\mathcal{M}_h}\bigg[\int_T\nabla\times
  (\mu^{-1}\nabla\times\bm{E}_h^N)\cdot
     (\bar{\bm\psi}^{(1)}-\bar{\bm\psi}_h^{(1)})\\
&\hspace{1cm}+\int_{\partial
T}(\mu^{-1}\nabla\times\bm{E}_h^N)\times\bm{\nu}\cdot
     (\bar{\bm\psi}^{(1)}-\bar{\bm\psi}_h^{(1)})\bigg].
\end{align*}
It follows from Green's formula that we have
\begin{align*}
J_1^2=&\omega^2\int_{\Omega}\varepsilon\bm{E}_h^N\cdot(\bar{\bm\psi}-\bar{
\bm\psi}_h)\\
  =&\sum_{T\in\mathcal{M}_h}\omega^2\int_T \varepsilon\bm{E}_h^N
  \cdot(\bar{\bm\psi}-\bar{\bm\psi}_h)\\
=&\sum\limits_{T\in\mathcal{M}_h}\bigg[\omega^2\int_T
\varepsilon\bm{E}_h^N\cdot(\bar{\bm\psi}^{(1)}-\bar{\bm\psi}_h^{(1)})
  +\omega^2\int_T \varepsilon\bm{E}_h^N
  \cdot(\nabla\bar{\psi}^{(2)}-\nabla\bar{\psi}_h^{(2)})\bigg]\\
=&\sum_{T\in\mathcal{M}_h}\bigg[\omega^2\int_T \varepsilon\bm{E}_h^N
  \cdot(\bar{\bm\psi}^{(1)}-\bar{\bm\psi}_h^{(1)})
  -\omega^2\int_T\nabla\cdot(\varepsilon\bm{E}_h^N)
  (\bar{\psi}^{(2)}-\bar{\psi}_h^{(2)})\\
&\hspace{1cm}+\omega^2\int_{\partial T}(\varepsilon\bm{E}_h^N\cdot\bm{\nu})
  (\bar{\psi}^{(2)}-\bar{\psi}_h^{(2)})\bigg].
\end{align*}
 Since $\varepsilon, \mu$ are biperiodic functions and
$\bm{E}_h^N, \bm{\psi}^{(1)}-\bm{\psi}^{(1)}_h, \psi^{(2)}-\psi^{(2)}_h$
are quasi-biperiodic functions, it is easy to verify that
\[
  \sum_{j=1}^2\sum_{T\in\mathcal{M}_h}\bigg[
  \int_{\partial T\cap\Gamma_{j0}} (\varepsilon\bm{E}_h^N\cdot\bm{\nu})
  (\bar{\psi}^{(2)}-\bar{\psi}^{(2)}_h)
  +\int_{\partial
T'\cap\Gamma_{j1}} (\varepsilon\bm{E}_h^N\cdot\bm{\nu})
  (\bar{\psi}^{(2)}-\bar{\psi}^{(2)}_h)\bigg]=0,
\]
where $T'$ is the tetrahedron having one of its faces on $\Gamma_{j1}$
corresponding to $T$. Again using Green's formula, we have
\begin{align*}
J_1^3=&{\rm
i}\omega\sum_{j=1}^2\int_{\Gamma_j}\mathscr{T}_j^{N_j}(\bm{E}_h^N)_{\Gamma_j}
\cdot(\bar{\bm\psi}^{(1)}-\bar{\bm\psi}_h^{(1)})
+{\rm i}\omega\sum_{j=1}^2\int_{\Gamma_j}\mathscr{T}_j^{N_j}(\bm{E}_h^N)_{
\Gamma_j} \cdot\nabla_{\Gamma_j}(\bar{\psi}^{(2)}-\bar{\psi}^{(2)}_h)\\
=&{\rm i}\omega\sum_{j=1}^2\int_{\Gamma_j}\mathscr{T}_j^{N_j}(\bm{E}_h^N)_{
\Gamma_j} \cdot(\bar{\bm\psi}^{(1)}-\bar{\bm\psi}_h^{(1)})\\
&\hspace{1cm}-{\rm i}\omega\sum_{j=1}^2\int_{\Gamma_j}\text{div}_{
\Gamma_j}(\mathscr{T}_j^{N_j}(\bm{E}_h^N)_{\Gamma_j})
  (\bar{\psi}^{(2)}-\bar{\psi}^{(2)}_h)\db\\
=&\sum_{T\in\mathcal{M}_h}\sum_{j=1}^2
  \sum_{F\subset\partial T\cap\Gamma_j}{\rm i}\omega\int_F
  (\mathscr{T}_j^{N_j}(\bm{E}_h^N)_{\Gamma_j})\cdot
  (\bar{\bm\psi}^{(1)}-\bar{\bm\psi}^{(1)}_h)_{\Gamma_j}\\
  &\hspace{1cm}-\sum_{T\in\mathcal{M}_h}\sum_{j=1}^2
  \sum_{F\subset\partial T\cap\Gamma_j}{\rm i}\omega\int_F
  \text{div}_{\Gamma_j}(\mathscr{T}_j^{N_j}(\bm{E}_h^N)_{\Gamma_j})
  (\bar{\psi}^{(2)}-\bar{\psi}^{(2)}_h).
\end{align*}
Applying Green's formula again on $J_1^4$ gives
\begin{align*}
J_1^4=&\langle\bm{f}, \bm{\psi}-\bm{\psi}_h\rangle_{\Gamma_1}=-2{\rm i}
\omega\int_{\Gamma_1} \mathscr{T}_1 \bm{E}^{\rm inc}_{\Gamma_1}\cdot
        (\bar{\bm\psi}-\bar{\bm\psi}_h)_{\Gamma_1}\\
=&-2{\rm i}\omega\int_{\Gamma_1}\mathscr{T}_1 \bm{E}^{\rm inc}_{\Gamma_1}\cdot
        (\bar{\bm\psi}^{(1)}-\bar{\bm\psi}_h^{(1)})_{\Gamma_1}\\
&\hspace{1cm}-2{\rm i}\omega\int_{\Gamma_1}\mathscr{T}_1 \bm{E}^{\rm
inc}_{\Gamma_1} \cdot \nabla_{\Gamma_1}(\bar{\psi}^{(2)}-\bar{\psi}_h^{(2)})\\
=&-2{\rm i}\omega\int_{\Gamma_1}\mathscr{T}_1 \bm{E}^{\rm inc}_{\Gamma_1}\cdot
        (\bar{\bm\psi}^{(1)}-\bar{\bm\psi}_h^{(1)})_{\Gamma_1}\\
&\hspace{1cm}+2{\rm i}\omega\int_{\Gamma_1}\text{div}_{\Gamma_1}
(\mathscr{T}_1
\bm{E}^{\rm inc}_{\Gamma_1})(\bar{\psi}^{(2)}-\bar{\psi}_h^{(2)})\db\\
=&\sum_{T\in\mathcal{M}_h}\sum_{F\subset\partial T\cap\Gamma_1}
       -2{\rm i}\omega\int_{F}\mathscr{T}_1 \bm{E}^{\rm inc}_{\Gamma_1}\cdot
        (\bar{\bm\psi}^{(1)}-\bar{\bm\psi}_h^{(1)})_{\Gamma_1}\\
&\hspace{1cm}+\sum_{T\in\mathcal{M}_h}\sum_{F\subset\partial
T\cap\Gamma_1} 2{\rm i}\omega\int_F\text{div}_{\Gamma_1}
        (\mathscr{T}_1 \bm{E}^{\rm inc}_{\Gamma_1})
        (\bar{\psi}^{(2)}-\bar{\psi}_h^{(2)}).
\end{align*}
Combining the above estimates, we get
\begin{align*}
  &J_1=J_1^1+J_1^2+J_1^3+J_1^4\\
     =&\sum_{T\in\mathcal{M}_h}\bigg\{
     \int_T(\omega^2 \varepsilon\bm{E}_h^N
     -\nabla\times(\mu^{-1}\nabla\times\bm{E}_h^N))
\cdot(\bar{\bm\psi}^{(1)}-\bar{\bm\psi}^{(1)}_h)-\int_T\nabla\cdot
(\omega^2\varepsilon\bm{E}_h^N)(\bar{\psi}^{(2)}-\bar{\psi}_h^{(2)})\\
     &+\sum_{F\in\mathcal{F}_h,F\subset\partial T}
     \Big[-\int_F(\mu^{-1}\nabla\times\bm{E}_h^N)\times\bm{\nu}\cdot
(\bar{\bm\psi}^{(1)}-\bar{\bm\psi}^{(1)}_h)+\int_F \omega^2
\varepsilon\bm{E}_h^N\cdot\bm{\nu}(\bar{\psi}^{(2)}-\bar{\psi}_h^{(2)})\Big]
\bigg\}\\
     &+\sum_{T\in\mathcal{M}_h}\sum_{j=1}^2
     \sum_{F\subset\partial T\cap\Gamma_j}\Big[{\rm i}\omega
     \int_F(\mathscr{T}_j^{N_j}(\bm{E}_h^N)_{\Gamma_j})\cdot
(\bar{\bm\psi}^{(1)}-\bar{\bm\psi}_h^{(1)})_{\Gamma_j}\\
&\hskip 100pt-{\rm i}
\omega\int_F\text{div}_{\Gamma_j}
     (\mathscr{T}_j^{N_j}(\bm{E}_h^N)_{\Gamma_j})
  (\bar{\psi}^{(2)}-\bar{\psi}^{(2)}_h)\Big]\db\\
  &+\sum_{T\in\mathcal{M}_h}\sum_{F\subset\partial T\cap\Gamma_1}
       -2{\rm i}\omega\int_{F}\mathscr{T}_1 \bm{E}^{\rm inc}_{\Gamma_1}\cdot
        (\bar{\bm\psi}^{(1)}-\bar{\bm\psi}_h^{(1)})_{\Gamma_1}\db\\
        &+\sum_{T\in\mathcal{M}_h}\sum_{F\subset\partial T\cap\Gamma_1}
        2{\rm i}\omega\int_{F}\text{div}_{\Gamma_1}
        (\mathscr{T}_1 \bm{E}^{\rm
inc}_{\Gamma_1})(\bar{\psi}^{(2)}-\bar{\psi}_h^{(2)}).
 \end{align*}
 Using the residuals, we have
 \begin{align*}
    J_1=&\sum_{T\in\mathcal{M}_h}\bigg\{ \int_T R_T^{(1)}\cdot
    (\bar{\bm\psi}^{(1)}-\bar{\bm\psi}_h^{(1)})+\int_T R_T^{(2)}
    (\bar{\psi}^{(2)}-\bar{\psi}_h^{(2)})\\
    &+\sum_{F\in\mathring{F}_h\cap\partial T}
\Big[\int_F\frac{1}{2}J_F^{(1)}\cdot(\bar{\bm\psi}^{(1)}-\bar{\bm\psi}_h^{(1)}
)+\int_F\frac{1}{2}J_F^{(2)}(\bar{\psi}^{(2)}-\bar{\psi}_h^{(2)})\Big]\\
&+\sum_{j=1}^2\sum_{F\subset\partial
T\cap\Gamma_j} \Big[\int_F\frac{1}{2}J_F^{(1)}\cdot(\bar{\bm\psi}^{(1)}-\bar{
\bm\psi}_h^{(1)})_{\Gamma_j}+\int_F\frac{1}{2}J_F^{(2)}(\bar{\psi}^{(2)}-\bar{
\psi}_h^{(2)})\Big] \bigg\}  \\
    =&\sum_{T\in\mathcal{M}_h}\bigg\{ \int_T R_T^{(1)}\cdot
    (\bar{\bm\psi}^{(1)}-\bar{\bm\psi}_h^{(1)})+\int_T R_T^{(2)}
    (\bar{\psi}^{(2)}-\bar{\psi}_h^{(2)})\\
    &+\sum_{F\subset\partial T}
\Big[\int_F\frac{1}{2}J_F^{(1)}\cdot(\bar{\bm\psi}^{(1)}-\bar{\bm\psi}_h^{(1)}
)_{\Gamma_j}+\int_F\frac{1}{2}J_F^{(2)}(\bar{\psi}^{(2)}-\bar{\psi}_h^{(2)})\Big
]\bigg\}.
\end{align*}
Taking $\psi_h^{(1)}=\mathscr{P}_h\psi^{(1)}$ and
$\psi_h^{(2)}=\Pi_h\psi^{(2)}$, we obtain
\begin{align}
|J_1|\leq&C\sum_{T\in\mathcal{M}_h}\bigg\{\Big[h_T\|R_T^{(1)}\|_{L^2(T)^3
}|\bm{\psi}^{(1)}|_{H^1(\tilde{T})^3}+h_T\|R_T^{(2)}\|_{L^2(T)}
  |\psi^{(2)}|_{H^1(\tilde{T})}\Big]\nonumber\\
  &+\sum_{F\in\partial T}\Big[h_F^{1/2}\|J_F^{(1)}\|_{L^2(F)^3}
  |\bm{\psi}^{(1)}|_{H^1(\tilde{F})^3}+h_F^{1/2}\|J_F^{(2)}\|_{L^2(F)}
  |\psi^{(2)}|_{H^1(\tilde{F})}\Big]\bigg\}\nonumber\db\\
  \leq&C\left(\sum_{T\in\mathcal{M}_h}\eta_T^{2}\right)^{1/2}
\left(|\psi^{(1)}|_{H^1(\Omega)^3}^2+|\psi^{(2)}|_{H^1(\Omega)}^2\right)^{1/2}
  \nonumber\\
  \leq &C\left(\sum_{T\in\mathcal{M}_h}\eta_T^{2}\right)^{1/2}
  \|\bm{\psi}\|_{H({\rm curl},\Omega)}.\label{J1}
\end{align}

It remains to estimate $J_2$. A straightforward calculation yields
\begin{align*}
|J_2|&=\omega\bigg|\sum_{j=1}^2\int_{\Gamma_j}(\mathscr{T}_j-\mathscr{T}_j^{N_j}
)\bm{E}_{\Gamma_j}\cdot\bar{\bm\psi}_{\Gamma_j}\bigg|\\
  =&\omega\bigg{|}\sum_{j=1}^2\int_{\Gamma_j}\sum_{n{\not\in} U_{N_j}}
(r_{1n}^{(j)},r_{2n}^{(j)},0)^\top e^{{\rm i}(\alpha_{1n}x_1+\alpha_{2n}x_2)}
\cdot\sum_{n\in\mathbb{Z}^2}(\bar{\psi}_{1n}^{(j)},\bar{\psi}_{2n}^{(j)},
0)^\top e^{-{\rm i}(\alpha_{1n}x_1+\alpha_{2n}x_2)}\bigg|\db\\
  =&\omega L_1L_2\bigg|\sum_{j=1}^2\sum_{n\not\in U_{N_j}}
(r_{1n}^{(j)}\bar{\psi}_{1n}^{(j)}+r_{2n}^{(j)}\bar{\psi}_{2n}^{(j)}
)\bigg|\\
  =&L_1L_2\bigg|\sum_{j=1}^2\sum_{n\not\in U_{N_j}}
  \frac{1}{\mu_j\beta_{jn}}\Big[\omega^2\varepsilon_j\mu_j
({E_{1n}(b_j)}\bar{\psi}_{1n}^{(j)}+E_{2n}(b_j)\bar{\psi}_{2n}^{(j)})\\
  &\hskip 100pt-(\alpha_{1n}E_{2n}(b_j)-\alpha_{2n}E_{1n}(b_j))
(\alpha_{1n}\bar{\psi}_{2n}^{(j)}-\alpha_{2n}\bar{\psi}_{1n}^{(j)})\Big]\bigg|.
  \end{align*}
Let $N_{j1}$ be a sufficiently large
integer such that
\[
\Big(\frac{2\pi N_{j1}}{\LL}\Big)^2>\text{Re}\kappa_j^2, \quad
|\mu_j\beta_{jn}|\gtrsim(|\alpha_n|^2+1)^{1/2}.
\]
Suppose $N_j\geq N_{j1}$. It follows from Lemma \ref{expo deca} that
\begin{align*}
  &|J_2|\leq L_1L_2\sum_{j=1}^2\sum_{n\not\in U_{N_j}}
  \frac{e^{-d_j (|\alpha_n|^2-\text{Re}
  \kappa_j^2)^{1/2}}}{|\mu_j\beta_{jn}|}\Big[|\omega^2\varepsilon_j\mu_j|(|E_{1n}(b_j')||\psi_{1n}(b_j)|\\
  &\quad\quad+|E_{2n} (b_j')||\psi_{2n}
(b_j)|)+|\alpha_{1n}E_{2n}(b_j')-\alpha_{2n}E_{1n}(b_j')|
  |\alpha_{1n}\psi_{2n}(b_j)-\alpha_{2n}\psi_{1n}(b_j)|\Big]\db\\
   &\ls L_1 L_2\sum_{j=1}^2{e^{-d_j \big(\big(\frac{2\pi
N_j}{\LL}\big)^2-\text{Re}\kappa_j^2\big)^{1/2}}}\\
  &\quad\times\bigg\{\sum_{n{\not\in} U_{N_j}}
(|\alpha_n|^2+1)^{-\frac12}\Big[|E_{1n}(b_j')|^2+|E_{2n}(b_j')|^2
+|\alpha_{1n}E_{2n}(b_j')-\alpha_{2n}E_{1n}(b_j')|^2\Big]\bigg\}^{\frac12}\\
  &\quad\times\bigg\{\sum_{n\not\in U_{N_j}}
(|\alpha_n|^2+1)^{-\frac12}[|\psi_{1n}(b_j)|^2+|\psi_{2n}(b_j)|^2
  +|\alpha_{1n}\psi_{2n}(b_j)-\alpha_{2n}\psi_{1n}(b_j)|^2]\bigg\}^{\frac12}\db\\
  &\ls\sum_{j=1}^2 e^{-d_j \big(\big(\frac{2\pi N_j}{\LL}\big)^2-\text{Re}\kappa_j^2\big)^{1/2}}
  \|\bm{E}\|_{TH_{\rm qper}^{-1/2}({\rm curl},\Gamma_j')}
  \|\bm{\psi}\|_{TH_{\rm qper}^{-1/2}({\rm curl}, \Gamma_j)}\\
  &\ls\sum_{j=1}^2 e^{-d_j \big(\big(\frac{2\pi N_j}{\LL}\big)^2-\text{Re}\kappa_j^2\big)^{1/2}}\|\bm{E}\|_{H({\rm
curl},\Omega)} \|\bm{\psi}\|_{H({\rm curl},\Omega)}.
\end{align*}
Using the inf-sup condition \eqref{isc} yields
\[
\|\bm{E}\|_{H({\rm curl},\Omega)}\leq\frac{1}{\gamma_1}\sup_{0\neq\bm{\psi} \in
H_{\rm qper}({\rm
curl}, \Omega)}\frac{|a(\bm{E},\bm{\psi})|}{\|\bm{\psi}\|_{H({\rm
curl},\Omega)}}.
\]
We have from \eqref{vp} that
\begin{align*}
&  |a(\bm{E},\bm{\psi})|=|\langle\bm{f},
\bm{\psi}\rangle_{\Gamma_1}|=|2\omega\int_{\Gamma_1}\mathscr{T}_1 \bm{E}^{\rm
inc}_{\Gamma_1}\cdot\bar{\bm\psi}_{\Gamma_1}|\\
&  \leq C\|\bm{E}^{\rm inc}\|_{TL^2(\Gamma_1)}\|\bm{\psi}\|_{TL^2(\Gamma_1)}
  \leq C\|\bm{E}^{\rm inc}\|_{TL^2(\Gamma_1)}\|\bm{\psi}\|_{H({\rm curl},
\Omega)}.
\end{align*}
Combining the above estimates gives
\begin{equation}\label{J2}
  |J_2|\ls\sum_{j=1}^2e^{-d_j \big(\big(\frac{2\pi N_j}{\LL}\big)^2-\text{Re}\kappa_j^2\big)^{1/2}}
  \|\bm{E}^{\rm inc}\|_{TL^2(\Gamma_1)}\|\bm{\psi}\|_{H({\rm curl},\Omega)},
\end{equation}
The proof is completed by combining \eqref{J1} and \eqref{J2}.
\end{proof}

\subsection{Estimates of the DtN operators}

The following lemma gives an estimate of the second term in the right hand
side of \eqref{xi-curl}.
\begin{lemma}\label{imt}
There exists a positive constant $C$ such that
\begin{equation*}
{\rm
Im}\int_{\Gamma_j}\mathscr{T}_j^{N_j}\bm{\psi}_{\Gamma_j}\cdot\bar{\bm\psi}_{
\Gamma_j} \geq -C\|\bm{\psi}\|^2_{H^{-1/2}_{\rm qper}(\Gamma_j)^3},\quad
\bm{\psi}\in TH_{{\rm{qper}}}^{-1/2}({\rm curl},\Gamma_j).
\end{equation*}
\end{lemma}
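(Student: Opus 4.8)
The plan is to exploit the diagonal structure of $\mathscr{T}_j^{N_j}$ in the Fourier basis. Writing $\bm\psi_{\Gamma_j}=\sum_n(\psi_{1n},\psi_{2n},0)^\top e^{{\rm i}(\alpha_{1n}x_1+\alpha_{2n}x_2)}$ and using the orthogonality of the exponentials on $\Gamma_j$, the integral collapses to a single sum,
\[
\int_{\Gamma_j}\mathscr{T}_j^{N_j}\bm\psi_{\Gamma_j}\cdot\bar{\bm\psi}_{\Gamma_j}
=L_1L_2\sum_{n\in U_{N_j}}\big(r_{1n}^{(j)}\bar\psi_{1n}+r_{2n}^{(j)}\bar\psi_{2n}\big).
\]
Inserting the definitions of $r_{1n}^{(j)},r_{2n}^{(j)}$ from \eqref{tbc} and completing the square in the cross term $\alpha_{1n}\alpha_{2n}\,2{\rm Re}(\psi_{1n}\bar\psi_{2n})$, I would first establish the clean identity
\[
r_{1n}^{(j)}\bar\psi_{1n}+r_{2n}^{(j)}\bar\psi_{2n}
=\frac{\kappa_j^2\big(|\psi_{1n}|^2+|\psi_{2n}|^2\big)-|\alpha_{1n}\psi_{2n}-\alpha_{2n}\psi_{1n}|^2}{\omega\mu_j\beta_{jn}}.
\]
Writing $A_n=|\psi_{1n}|^2+|\psi_{2n}|^2\ge 0$ and $B_n=|\alpha_{1n}\psi_{2n}-\alpha_{2n}\psi_{1n}|^2\ge 0$, and recalling that $\omega\mu_j>0$ is real, the problem reduces to bounding $\sum_{n\in U_{N_j}}{\rm Im}\big((\kappa_j^2A_n-B_n)/\beta_{jn}\big)$ from below.

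For the lower bound I would use the branch conventions and sign hypotheses dictated by the model. Since $1/\beta_{jn}=\bar\beta_{jn}/|\beta_{jn}|^2$ and ${\rm Im}\,\beta_{jn}\ge 0$, one has ${\rm Im}(1/\beta_{jn})=-{\rm Im}\,\beta_{jn}/|\beta_{jn}|^2\le 0$, so the term $-B_n\,{\rm Im}(1/\beta_{jn})$ is \emph{nonnegative} and may simply be discarded. It then remains to control $A_n\,{\rm Im}(\kappa_j^2/\beta_{jn})$. Using $\kappa_j^2/\beta_{jn}=\kappa_j^2\bar\beta_{jn}/|\beta_{jn}|^2$ together with ${\rm Im}\,\kappa_j^2=\omega^2\mu_j{\rm Im}\,\varepsilon_j\ge 0$, ${\rm Re}\,\beta_{jn}\ge 0$ (both forced by the chosen branch of $\beta_{jn}=(\kappa_j^2-|\alpha_n|^2)^{1/2}$ with ${\rm Im}\,\beta_{jn}\ge0$), and ${\rm Re}\,\kappa_j^2=\omega^2\mu_j{\rm Re}\,\varepsilon_j>0$, I would estimate
\[
{\rm Im}\Big(\frac{\kappa_j^2}{\beta_{jn}}\Big)
=\frac{{\rm Im}\,\kappa_j^2\,{\rm Re}\,\beta_{jn}-{\rm Re}\,\kappa_j^2\,{\rm Im}\,\beta_{jn}}{|\beta_{jn}|^2}
\ge-\frac{{\rm Re}\,\kappa_j^2\,{\rm Im}\,\beta_{jn}}{|\beta_{jn}|^2}\ge-\frac{{\rm Re}\,\kappa_j^2}{|\beta_{jn}|}.
\]
Combining these two observations yields the pointwise lower bound ${\rm Im}\big((\kappa_j^2A_n-B_n)/\beta_{jn}\big)\ge -{\rm Re}\,\kappa_j^2\,A_n/|\beta_{jn}|$.

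The final, and most delicate, step is the uniform comparison $|\beta_{jn}|^{-1}\le C(1+|\alpha_n|^2)^{-1/2}$ for all $n\in\mathbb Z^2$, which converts the right-hand side into the $H^{-1/2}_{\rm qper}(\Gamma_j)$ norm. Since $|\beta_{jn}|=|\kappa_j^2-|\alpha_n|^2|^{1/2}$, the ratio $(1+|\alpha_n|^2)^{1/2}/|\beta_{jn}|$ tends to $1$ as $|\alpha_n|\to\infty$, so it is bounded for all large $|\alpha_n|$; the only threat, and where I expect the main obstacle to lie, is the finitely many modes for which $|\alpha_n|^2$ is close to $\kappa_j^2$. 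These are handled by the resonance-exclusion assumption $\kappa_j^2\ne|\alpha_n|^2$ together with the discreteness of the lattice $\{\alpha_n\}$: only finitely many indices have $|\alpha_n|^2$ within a bounded distance of $|\kappa_j^2|$, each gives $|\beta_{jn}|>0$, so their minimum is strictly positive and is absorbed into $C$. With this uniform bound in hand, I would sum over $n\in U_{N_j}$, drop the restriction to $U_{N_j}$ (legitimate for a lower bound since every added summand is nonnegative), and recognize
\[
L_1L_2\sum_{n\in\mathbb Z^2}(1+|\alpha_n|^2)^{-1/2}A_n=\|\bm\psi\|^2_{H^{-1/2}_{\rm qper}(\Gamma_j)^3},
\]
which delivers the claim with $C$ proportional to ${\rm Re}\,\kappa_j^2/(\omega\mu_j)$.
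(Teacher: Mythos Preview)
Your proof is correct and follows essentially the same route as the paper: Fourier-diagonalize, discard the nonnegative $B_n$ contribution coming from ${\rm Im}(1/\beta_{jn})\le 0$, and then bound the $A_n$ coefficient uniformly in $n$. The only cosmetic difference is that the paper packages this last bound as the maximum of an auxiliary one-variable function $G_j(w_{jn})$, whereas you split it into ${\rm Im}(\kappa_j^2/\beta_{jn})\ge -{\rm Re}\,\kappa_j^2/|\beta_{jn}|$ together with the explicit resonance-exclusion/lattice-discreteness bound $|\beta_{jn}|^{-1}\lesssim(1+|\alpha_n|^2)^{-1/2}$.
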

\begin{proof}
Define
\[
\kappa_j^2=\omega^2\varepsilon_j\mu_j=u_j+{\rm i}v_j,
\]
It follows from $\mu_j>0$, $\text{Re}(\varepsilon_j)>0$,
and $\text{Im}(\varepsilon_j)\geq 0$ that $u_j>0$ and $v_j\geq 0$. Recall
\[
\beta_{jn}^2=\kappa_j^2-|\alpha_n|^2=w_{jn}+{\rm i}v_j,
\]
where
\[
 w_{jn}=\text{Re}(\omega^2\varepsilon_j\mu_j)-|\alpha_n|^2=u_j-|\alpha_n|^2.
\]
It is clear to note that $u_j\geq w_{jn}$. Noting that $\mu_j>0$,
$\text{Re}(\varepsilon_j)>0$, and $\text{Im}(\varepsilon_j)\geq 0$, we get
\[
\beta_{jn}=\gamma_{jn}+{\rm i}\lambda_{jn},
\]
where
\begin{align*}
&\gamma_{jn}=\text{Re}(\beta_{jn})=\frac{1}{\sqrt{2}}
    \left(\sqrt{w_{jn}^2+v_j^2}+w_{jn}\right)^{1/2},\\
&\lambda_{jn}=\text{Im}(\beta_{jn})=\frac{1}{\sqrt{2}}
    \left(\sqrt{w_{jn}^2+v_j^2}-w_{jn}\right)^{1/2}.
\end{align*}
As a quasi-periodic function, $\bm{\psi}_{\Gamma_j}$ has the expansion
\[
\bm{\psi}_{\Gamma_j}(x_1,x_2,b_j)=\sum_{n\in
\mathbb{Z}^2}(\psi_{1n}(b_j), \psi_{2n}(b_j),
0)^\top e^{{\rm i}(\alpha_{1n}x_1+\alpha_{2n}x_2)}.
\]
We have from the definition of the capacity operator $\mathscr{T}_j$ that
\[
\int_{\Gamma_j}\mathscr{T}_j^{N_j}\bm{\psi}_{\Gamma_j}\cdot\bar{\bm{\psi}}_{\Gamma_j}
=\frac{L_1L_2}{\omega\mu_j}\sum\limits_{n\in U_{N_j}}\bigg{[}\frac{\kappa_j^2}{
\beta_{jn}}(|\psi_{1n}|^2+|\psi_{2n}|^2)-\frac{1}{\beta_{jn}}|\alpha_{1n}\psi_{
2n}-\alpha_{2n}\psi_{1n}|^2\bigg{]}.
\]
Taking the imaginary part gives
\begin{align*}
    {\rm Im}\langle\mathscr{T}_j^{N_j}\bm{\psi}, \bm{\psi}\rangle_{\Gamma_j}
    =&\frac{L_1L_2}{\omega\mu_j}\sum_{n\in U_{N_j}}\bigg[
    \frac{\lambda_{jn}}{\gamma_{jn}^2+\lambda_{jn}^2}
    |\alpha_{1n}\psi_{2n}-\alpha_{2n}\psi_{1n}|^2\\
&\qquad\qquad\quad+\frac{v_j\gamma_{jn}-u_j\lambda_{jn}}{\gamma_{jn}^2+\lambda_{
jn}^2}
    (|\psi_{1n}|^2+|\psi_{2n}|^2)\bigg]\\
    \geq&\frac{L_1L_2}{\omega\mu_j}\sum_{n\in U_{N_j}}
    \frac{v_j\gamma_{jn}-u_j\lambda_{jn}}{\gamma_{jn}^2+\lambda_{jn}^2}
    (|\psi_{1n}|^2+|\psi_{2n}|^2)
\end{align*}

To prove the lemma, it is required to estimate
\begin{align*}
    &\frac{1}{\omega\mu_j}\bigg{|}
    \frac{v_j\gamma_{jn}-u_j\lambda_{jn}}{\gamma_{jn}^2+\lambda_{jn}^2}
    (1+|\alpha_n|^2)^{1/2}\bigg{|}\\
    =&\frac{1}{\omega\mu_j}\left[\frac{1+u_j-w_{jn}}{w_{jn}^2+v_j^2}
    \left(v_j^2\frac{\sqrt{w_{jn}^2+v_j^2}+w_{jn}}{2}
    +u_j^2\frac{\sqrt{w_{jn}^2+v_j^2}-w_{jn}}{2}-u_jv_j^2\right)
    \right]^{1/2}.
\end{align*}
Let
\[
G_j(t)=\frac{1+u_j-t}{t^2+v_j^2}\left(v_j^2\frac{\sqrt{t^2+v_j^2}+t}{2}
+u_j^2\frac{\sqrt{t^2+v_j^2}-t}{2}-u_jv_j^2\right).
\]
It can be seen that $G_j(t)$ is a continuous and positive function for $t\leq
u_j$ and $G_j(t)\rightarrow u_j^2$ as $t\rightarrow-\infty$. Thus the function
$G_j(t)$ reaches its maximum at some $t^*$. Therefore, we have
\[
\frac{1}{\omega\mu_j}\bigg|\frac{v_j\gamma_{jn}-u_j\lambda_{jn}}{\gamma_{jn}
^2+\lambda_{jn}^2}(1+\alpha_n^2)^{1/2}\bigg|\leq\frac{\sqrt{G_j(t^*)}}{
\omega\mu_j}:=C.
\]
A simple calculation yields that
\begin{align*}
&\text{Im}\int_{\Gamma_j}\mathscr{T}_j^{N_j}\bm{\psi}_{\Gamma_j}\cdot\bar{
\bm\psi}_{\Gamma_j}\\
&\geq -CL_1L_2\sum_{n\in
U_{N_j}}(1+|\alpha_n|^2)^{-1/2}(|\psi_{1n}|^2+|\psi_{2n}|^2)\\
&\geq-CL_1L_2\sum_{n\in
\mathbb{Z}^2}(1+|\alpha_n|^2)^{-1/2}(|\psi_{1n}|^2+|\psi_{2n}|^2)\\
&=-C\|\bm{\psi}\|_{H^{-1/2}_{\rm qper}(\Gamma_j)^3}^2,
\end{align*}
which completes the proof.
\end{proof}

The following lemma gives an estimate of the last term in \eqref{xi-L2}.
\begin{lemma}\label{lem TT}
  Let $\bm{W}$ be the solution of the dual problem \eqref{dp}. Then there exist
 integers $N_{j2}$ independent of $h$ and satisfying $\big(\frac{2\pi
N_{j2}}{\LL}\big)^2> \mathrm{Re}(\kappa_j^2)$, $j=1,2$ such that for
$N_j\ge N_{j2}$, the following estimate holds:
\eq{\label{eTT}
&\sum_{j=1}^2\bigg|\omega\int_{\Gamma_j}(\mathscr{T}_j-\mathscr{T}_j^{N_j})\bm{\xi}_{\Gamma_j}\cdot\bar{\bm W}_{\Gamma_j}\bigg|\notag\\
&\leq \frac23\abs{(\ep\bm{\xi},\bm{\xi})}+ C \sum_{j=1}^2N_j^{-2}\big(1+d_j^{-4}\big) \|\bm{\xi}\|_{H({\rm curl},\Omega)}^2,
}
where $C$ is a constant independent of $h$ and $N_j$.
\end{lemma}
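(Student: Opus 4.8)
The plan is to pass to Fourier series on each $\Ga_j$ and isolate the truncation tail. First I would expand $\bm\xi_{\Ga_j}$ and $\bm W_{\Ga_j}$ in Fourier modes and use the explicit symbol of $\mathscr{T}_j-\mathscr{T}_j^{N_j}$, which keeps only the modes $n\notin U_{N_j}$; by Parseval, exactly as in the treatment of $J_2$ in Lemma \ref{att}, each surface term becomes $L_1L_2\sum_{n\notin U_{N_j}}\frac{1}{\mu_j\beta_{jn}}\big[\kappa_j^2(\xi_{1n}\bar W_{1n}+\xi_{2n}\bar W_{2n})-(\al_{1n}\xi_{2n}-\al_{2n}\xi_{1n})\overline{(\al_{1n}W_{2n}-\al_{2n}W_{1n})}\big]$, all coefficients being evaluated at $x_3=b_j$. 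I would fix $N_{j2}$ so large that for $N_j\ge N_{j2}$ every tail mode is evanescent, $|\al_n|>\frac{2\pi N_j}{\LL}>({\rm Re}\,\kappa_j^2)^{1/2}$, so that $|\mu_j\beta_{jn}|\gtrsim(1+|\al_n|^2)^{1/2}$ and the symbol contributes one factor $|\beta_{jn}|^{-1}\ls(1+|\al_n|^2)^{-1/2}$.

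The crucial, and delicate, point is that this single factor is not enough. Since the surface-curl coefficient $\al_{1n}\varphi_{2n}-\al_{2n}\varphi_{1n}$ is of order $|\al_n|$, the dominant part of the symbol is of order one; hence measuring $\bm\xi_{\Ga_j}$ in its natural trace space $TH^{-1/2}_{\rm qper}({\rm curl},\Ga_j)$ forces the same $(1+|\al_n|^2)^{-1/2}$ weight onto $\bm W$, and a plain Cauchy--Schwarz leaves $\|\bm\xi_{\Ga_j}\|_{TH^{-1/2}_{\rm qper}({\rm curl},\Ga_j)}\|\bm W_{\Ga_j}\|_{TH^{-1/2}_{\rm qper}({\rm curl},\Ga_j)}$ with \emph{no} gain in $N_j$ — this is the manifestation of the arbitrarily slow operator-norm convergence of the truncated DtN map. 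To produce a rate I would exploit the extra regularity of the \emph{dual} solution near $\Ga_j$. Since ${\rm div}(\bar\ep\bm W)=0$ and $\ep$ is constant on the homogeneous slab $\tilde\Om_j=\Om_j'\setminus\Om_j$, both $\bm W$ and $\na\times\bm W$ are divergence free there. Observing that $(\na\times\bm W)_{3n}=-{\rm i}(\al_{1n}W_{2n}-\al_{2n}W_{1n})$ is precisely the surface-curl coefficient appearing above, I would apply Lemma \ref{L-div} (mode by mode, to $\bm v=\na\times\bm W$) together with the one-dimensional trace inequality \eqref{Ceta} to control the high-frequency part of this curl-trace by the $L^2$ and $H({\rm curl})$ norms of $\na\times\bm W$ over $\tilde\Om_j$, at the cost of one extra power $|\al_n|^2$. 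Restricting to $n\notin U_{N_j}$ and pulling out $\max_{n\notin U_{N_j}}(1+|\al_n|^2)^{-1}\le\big(\frac{\LL}{2\pi N_j}\big)^2$ then turns the tail into the factor $N_j^{-2}$, while the cutoff bounds $|\chi'|\ls d_j^{-1}$, $|\chi''|\ls d_j^{-2}$ from \eqref{chi} account for the factor $1+d_j^{-4}$.

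With the $\bm\xi$-factor bounded by $\|\bm\xi\|_{H({\rm curl},\Om)}$ through the trace Lemma \ref{tt}, I would control every norm of $\bm W$ and $\na\times\bm W$ on $\tilde\Om_j$ by $\|\bm W\|_{H({\rm curl},\Om)}\le C_0\|\bm\ze\|_{L^2(\Om)^3}$ via the dual stability \eqref{dps}, re-expressing $\na\times\na\times\bm W=\bar\mu(\omega^2\bar\ep\bm W+\bm\ze)$ from the dual equation in the slab. This yields, for each $j$, a bound of the form $C N_j^{-1}(1+d_j^{-2})\|\bm\xi\|_{H({\rm curl},\Om)}\|\bm\ze\|_{L^2(\Om)^3}$. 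A weighted Young inequality then splits this product; since ${\rm Re}\,\ep>0$ gives $\|\bm\xi\|_{L^2(\Om)^3}^2\ls|(\ep\bm\xi,\bm\xi)|$ and $\bm\ze$ is the divergence-free part of $\ep\bm\xi$, so that $\|\bm\ze\|_{L^2(\Om)^3}^2\ls|(\ep\bm\xi,\bm\xi)|$, I would choose the Young parameter so that the two surfaces $j=1,2$ together contribute at most $\tfrac23|(\ep\bm\xi,\bm\xi)|$, leaving precisely the remainder $C\sum_{j=1}^2 N_j^{-2}(1+d_j^{-4})\|\bm\xi\|_{H({\rm curl},\Om)}^2$.

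The main obstacle is exactly the one flagged above: because the DtN symbol is of order one it cancels the $H^{-1/2}$ trace weights, so no estimate that sees $\bm W$ only through $\|\bm W\|_{H({\rm curl},\Om)}$ can yield any decay in $N_j$. Extracting the missing half-order of smoothness for the trace of the dual solution on the artificial boundary — uniformly in $n$, and without invoking elliptic regularity up to the nonlocal DtN boundary — is what the divergence-free slab estimate of Lemma \ref{L-div} and the cutoff $\chi$ accomplish, and tracking the $d_j$-weights through the Cauchy--Schwarz and Young steps so that the two boundary contributions are absorbed into $\tfrac23|(\ep\bm\xi,\bm\xi)|$ with the clean remainder $N_j^{-2}(1+d_j^{-4})$ is the technical heart of the argument.
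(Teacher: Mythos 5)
Your outline correctly identifies the obstruction (no gain from a plain Cauchy--Schwarz in the trace norms) and several right ingredients (the homogeneous slab, the dual stability \eqref{dps}, the origin of $N_j^{-2}$ and $d_j^{-4}$), but the decisive step is wrong. Your plan reduces everything to an intermediate bound of each boundary term by $C N_j^{-1}(1+d_j^{-2})\,\|\bm{\xi}\|_{H({\rm curl},\Omega)}\|\bm{\zeta}\|_{L^2(\Omega)^3}$, from which the $\tfrac23\abs{(\ep\bm{\xi},\bm{\xi})}$ term would be manufactured by a weighted Young inequality. No such bound holds, because part of the pairing does not decay in $N_j$ at all. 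The trace of the dual solution on $\Gamma_j$ is not controlled by interior regularity of $\bm{W}$ and $\nabla\times\bm{W}$ alone: the adjoint DtN boundary condition injects an inhomogeneous term into the two-point boundary value problem \eqref{cdp} satisfied by $W_{jn}$ on the slab, namely $W_{jn}'(b_1)+|\beta_{1n}|W_{jn}(b_1)=-{\rm i}\kappa_1^{-2}\mu_1\al_{jn}\ze_{3n}(b_1)$, and solving that ODE shows $W_{jn}(b_1)$ contains the component $\omega_{jn}^{II}$ of \eqref{ewjnII}, of size comparable to $|\ze_{3n}(b_1)|$ with no smallness in $N_1$. Pairing it against $\bm{\xi}_{\Gamma_1}=\ep_1^{-1}\bm{\ze}_{\Gamma_1}$ and using ${\rm div}\,\bm{\ze}=0$ turns this piece into $\big|\sum_{n\notin U_{N_1}}\frac{1-e^{-2d_1|\beta_{1n}|}}{2|\beta_{1n}|^2}\,\ep_1^{-1}\int_{\Gamma_1}\ze_{3n}'\bar\ze_{3n}\big|$, a quadratic form in $\bm{\ze}$ with an $O(1)$ symbol. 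Concretely, take $\ep$ constant, $q=0$, $\bm{\xi}=\ep^{-1}\bm{\ze}$ with the divergence- and curl-free mode $\bm{\ze}_n=\big(\tfrac{{\rm i}\al_{1n}}{|\al_n|},\tfrac{{\rm i}\al_{2n}}{|\al_n|},1\big)^\top e^{{\rm i}(\al_{1n}x_1+\al_{2n}x_2)}e^{|\al_n|(x_3-b_1)}$, $|\al_n|\sim N_1$: then this piece equals roughly $\tfrac{1}{2|\ep_1|}\norm{\bm{\ze}_n}_{L^2(\tilde\Om_1)^3}^2$ while your claimed bound is of order $N_1^{-1}\norm{\bm{\ze}_n}_{L^2}^2$ (here $\|\bm{\xi}\|_{H({\rm curl},\Omega)}\sim\|\bm{\ze}\|_{L^2(\Omega)^3}$), so the inequality fails by a factor $N_1$. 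The $\tfrac23\abs{(\ep\bm{\xi},\bm{\xi})}$ term in the lemma is therefore not a Young-inequality artifact: it absorbs a genuinely non-decaying contribution (this is exactly the ``arbitrarily slow convergence'' phenomenon), and obtaining a coefficient strictly below $1$ requires the sharp constant $\tfrac{(1+\de)^3}{2}$ produced by applying Lemma \ref{L-div} to $\bm{\ze}$ with $\de=(4/3)^{1/3}-1$.

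Relatedly, your use of Lemma \ref{L-div} is off target. That lemma bounds the particular quadratic combination $\abs{\int_{\Gamma_j}v_{3n}'\bar v_{3n}}$ for a divergence-free field, not the trace value $|v_{3n}(b_j)|$ itself; applied to $\bm{v}=\nabla\times\bm{W}$ it controls $\int_{\Gamma_j}(\nabla\times\bm{W})_{3n}'\overline{(\nabla\times\bm{W})_{3n}}$, which never appears in the bilinear pairing of $\bm{\xi}$ with $\bm{W}$ and cannot be converted into the trace bound on $\al_{1n}W_{2n}-\al_{2n}W_{1n}$ that your Cauchy--Schwarz step needs. In the paper's proof, Lemma \ref{L-div} is applied to $\bm{\ze}$, and only after the explicit slab ODE has shown that the dangerous part of the pairing has exactly the form $\int_{\Gamma_j}\ze_{3n}'\bar\ze_{3n}$; the remaining parts do decay and are handled differently --- the $\omega_{jn}^{I}$ part via the $L^\infty$--$L^2$ estimate \eqref{zetajn} and the factors $e^{-d_j|\beta_{jn}|}$, and the surface-curl part $V_n$ via the analogous ODE \eqref{cdp2}, whose boundary condition at $b_1$ is homogeneous precisely because the $\ze_{3n}$ sources cancel --- yielding the $N_j^{-2}(1+d_j^{-4})$ remainder. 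Without the splitting $W_{jn}(b_j)=\omega_{jn}^{I}+\omega_{jn}^{II}$ coming from the explicit solution of the dual problem in the slab, your argument has no mechanism to isolate, let alone absorb, the non-decaying contribution.
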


\begin{proof}
We show that for $j=1,2$ and $\de>0$,
\begin{align}\label{WB}
&\bigg|\omega\int_{\Gamma_j}(\mathscr{T}_j-\mathscr{T}_j^{N_j})
\bm{\xi}_{\Gamma_j}\cdot\bar{\bm W}_{\Gamma_j}\bigg|\notag\\
\le& \frac{(1+\de)^2}{2}\sum_{n\notin
U_{N_1}}|\al_n|^{-2}\bigg|\ep_j^{-1}\int_{\Ga_j} \ze_{3n}'\bar\ze_{3n}\bigg|+C
N_j^{-2}\big(1+d_j^{-4}\big)\norm{\ze}_{H({\rm curl},\Om)}^2.
\end{align}
Following from Lemma \ref{L-div}, we conclude that
\eqn{&\bigg|\sum_{j=1}^2\bigg({\rm i}\omega\int_{\Gamma_j}(\mathscr{T}_j-\mathscr{T}_j^{N_j})
    \bm{\xi}_{\Gamma_j}\cdot\bar{\bm W}_{\Gamma_j}\bigg)\bigg|\notag\\
\le& \frac{(1+\de)^3}{2}\sum_{j=1}^2\Big(|\ep_j|^{-1}\norm{\bm{\ze}}_{L^{2}(\tilde\Om_j)^3}^2+C N_j^{-2}\big(\de^{-1}d_j^{-2}+1+d_j^{-4}\big)\norm{\ze}_{H({\rm curl},\Om)}^2\Big)\\
\le& \frac{(1+\de)^3}{2}\abs{(\ep\bm{\xi},\bm{\xi})}+C \sum_{j=1}^2N_j^{-2}\big(\de^{-2}+1+d_j^{-4}\big)\norm{\xi}_{H({\rm curl},\Om)}^2}
where we have used
$\abs{(\ep^{-1}\bm{\ze},\bm{\ze})}\le\abs{(\ep\bm{\xi},\bm{\xi})}$ and
$\norm{\ze}_{H({\rm curl},\Om)}\ls\norm{\xi}_{H({\rm curl},\Om)}$ (as
consequences of \eqref{zeta}) to derive the last inequality. Then \eqref{eTT}
can be obtained by taking $\de=\big(\frac43\big)^{1/3}-1$.

We shall only prove \eqref{WB} for $j = 1$ since the proof is
similar for $j = 2$. It follows from the definitions of $\mathscr{T}_1$ and
$\mathscr{T}_1^{N_1}$ that
\begin{align}\label{xiW}
&\bigg|\omega\int_{\Gamma_1}(\mathscr{T}_1-\mathscr{T}_1^{N_1})
    \bm{\xi}_{\Gamma_j}\cdot\bar{\bm W}_{\Gamma_1}\bigg|=\bigg|\frac{\omega}{\ep_1}\int_{\Gamma_1}(\mathscr{T}_1-\mathscr{T}_1^{N_1})
    \bm{\ze}_{\Gamma_1}\cdot\bar{\bm W}_{\Gamma_1}\bigg|\notag\\
&=L_1 L_2\sum_{n\notin
U_{N_1}}\bigg|\frac{1}{\ep_1\mu_1\beta_{1n}}\Big[\kappa_1^2\big(\ze_{1n}(b_1)\bar{W}_{1n}
(b_1)+\ze_ {2n}(b_1)\bar{W}_{2n}(b_1)\big)\\
&\quad-\big(\alpha_{1n}\ze_{2n}(b_1)-\alpha_{2n}\ze_{1n}(b_1)\big)\big(\alpha_{1n}
\bar{W}_{2n}(b_1)-\alpha_{2n}\bar{W}_{1n}(b_1)\big)\Big]\bigg|\notag,
  \end{align}
Let  $\tilde{\Omega}_1=\Omega_1'\setminus\Omega_1=\{\bm{x}\in\mathbb R^3:
0<x_1<L_1, \, 0<x_2<L_2,\, b_1'<x_3<b_1\}.$
Next we consider the dual problem in $\tilde{\Omega}_1$ in order to express
$W_{1n}(b_1)$ and $W_{2n}(b_1)$ in $\bm{\ze}$. Since $\varepsilon$ and $\mu$ are
real constants in $\tilde{\Omega}_1$, the dual problem \eqref{dp} can be
rewritten as
\[
\nabla\times\left(\nabla\times \bm{W}\right)
-\omega^2\varepsilon_1\mu_1\bm{W}=\mu_1\bm{\ze}\quad\text{in} ~\tilde{\Omega}_1.
\]
Using the divergence free condition $\nabla\cdot\bm{W}=0$ in $\tilde{\Omega}_1$,
we may reduce the above equation into the Helmholtz equation
\[
\Delta\bm{W}+\kappa_1^2\bm{W}=-\mu_1\bm{\ze}.
\]
Let $\bm{W}=(W_1, W_2, W_3)^\top$. Componentwisely, we have
\[
\Delta W_j+\kappa_1^2 W_j=-\mu_1\ze_j,\quad j=1,2,3.
\]
Since $W_j$ and $\ze_j$ are quasi-biperiodic functions, they have the following Fourier
series expansions
\[
W_j=\sum_{n\in\mathbb{Z}^2}W_{jn}e^{{\rm i} (\alpha_{1n}x_1+\alpha_{2n}x_2)},\quad \ze_j=\sum_{n\in\mathbb{Z}^2}\ze_{jn}e^{{\rm i} (\alpha_{1n}x_1+\alpha_{2n}x_2)}.
\]
A direction calculation yields that the Fourier coefficient $W_{jn}$ with
$n\notin U_{N_1}$ and $j=1,2$ satisfies the following  two-point boundary value
problem of the ordinary differential equations on the interval $(b_1', b_1)$:
\begin{equation}\label{cdp}
 \begin{cases}
 &W_{jn}^{''}(x_3)-|\beta_{1n}|^2 W_{jn}(x_3)=-\mu_1\ze_{jn}(x_3),\\
&W_{jn}(b_1')=W_{jn}(b_1'),\\
&W_{jn}^{'}(b_1)+|\beta_{1n}| W_{jn}(b_1)=-{\rm
i}\kappa_1^{-2}\mu_1\al_{jn}\ze_{3n}(b_1).
\end{cases}
\end{equation}
Here we have used $W_{3n}'=-{\rm i}\al_{1n}W_{1n}-{\rm i}\al_{2n}W_{2n}$ (as
a consequence of $\nabla\cdot\bm{W}=0$) and $W_{3n}^{''}-|\beta_{1n}|^2
W_{3n}=-\mu_1\ze_{jn}$ to derive the boundary conditions. It is easy to verify
that the solutions to \eqref{cdp} can be expressed as
\begin{align*}
W_{jn}(x_3)=& \frac{\mu_1}{2|\beta_{1n}|}\bigg(-\int_{b_{1}}^{x_{3}}e^{|\beta_{1n}|(x_{3}-s)}\ze_{jn}(s)ds+\int_{b_{1}'}^{x_{3}}e^{|\beta_{1n}|(s-x_{3})}\ze_{jn}(s)ds\\
&\quad
-\int_{b_{1}'}^{b_{1}}e^{|\beta_{1n}|(2b_{1}'-x_{3}-s)}\ze_{jn}(s)ds\bigg)+
e^{|\beta_{1n}|(b_{1}'-x_{3})}W_{jn}(b_1')\\
& \quad - \frac{{\rm i}\mu_1\al_{jn}\ze_{3n}(b_1)}{2\kappa_1^|
\beta_{1n}|} \Big(e^{|\beta_{1n}|(x_3-b_1)}-e^{|\beta_{1n}|(2b_{1}'-b_1-x_3)}
\Big)
\end{align*}
which leads to
\[
W_{jn}(b_1)=\omega_{jn}^I+\omega_{jn}^{II},
\]
where
\begin{align}
\omega_{jn}^I=&\frac{\mu_1}{2|\beta_{1n}|}\bigg(\int_{b_{1}'}^{b_1}e^{|\beta_{1n
}|(s-b_1)}\ze_{jn}(s)ds
-\int_{b_{1}'}^{b_{1}}e^{|\beta_{1n}|(2b_{1}'-b_1-s)}\ze_{jn}(s)ds\bigg)\\
&+ e^{-d_1|\beta_{1n}|}W_{jn}(b_1')\notag\\
\omega_{jn}^{II}=& - \frac{{\rm i}\mu_1\al_{jn}\ze_{3n}(b_1)}{2\kappa_1^2|
\beta_{1n}|} \big(1-e^{-2d_1|\beta_{1n}|}\big). \label{ewjnII}
\end{align}
Denote by $B_1=[b_1',b_1]$. Clearly,
\begin{align}\label{wjnI}
|\omega_{jn}^I| \leq &\frac{\mu_1}{2|\beta_{1n}|^{2}}\|\ze_{jn}\|_{L^{\infty}(B_1)}+e^{-d_1| \beta_{1n}|}|W_{jn}(b_1')|,
\end{align}

Next we turn to estimate the terms in \eqref{xiW}.
First, from \eqref{wjnI} we have
\eqn{|\ze_{jn}(b_1)\bar{\omega}_{jn}^I|&\ls \|\ze_{jn}\|_{L^{\infty}(B_1)}\Big(|\beta_{1n}|^{-2}\|\ze_{jn}\|_{L^{\infty}(B_1)}+e^{-d_1| \beta_{1n}|}|W_{jn}(b_1')|\Big)\\
&\ls |\beta_{1n}|^{-2}\|\ze_{jn}\|_{L^{\infty}(B_1)}^2+|\beta_{1n}|^{2}e^{-2d_1|
\beta_{1n}|}|W_{jn}(b_1')|^2.}
 It is easy to show that (see the proof of \cite[Lemma 4.5]{WBLLW15}):
\begin{align}\label{zetajn}
&\|\ze_{jn}\|_{L^{\infty}(B_1)}^{2}
\leq \frac{2}{d_1}\|\ze_{jn}\|_{L^{2}(B_1)}^{2}+2\|\ze_{jn}\|_{L^{2}
(B_1)}\|\ze_{jn}'\|_{L^{2}(B_1)}.
\end{align}
Therefore
\eq{\label{chi_om}|\ze_{jn}(b_1)\bar{\omega}_{jn}^I|\ls& |\beta_{1n}|^{-3}\Big(|\beta_{1n}|\big(|\beta_{1n}|+d_1^{-1}\big)\|\ze_{jn}\|_{L^{2}(B_1)}^2+ \|\ze_{jn}'\|_{L^{2}(B_1)}^{2}\Big)\\
&+|\beta_{1n}|^{2}e^{-2d_1| \beta_{1n}|}|W_{jn}(b_1')|^2.\notag}
Note that we may choose $N_{12}$ such that
\begin{align}\label{beta1n}
|\al_{n}|\gtrsim |\be_{1n}|\ge (1+\de)^{-1}|\al_{n}|\gtrsim \max(\kappa_1,N_1)
\quad\text{for } n\notin U_{N_1}.
\end{align}
{Simple calculations show that
\eq{\label{ecurl}
\norm{\na\times\bm{\ze}}_{L^2(\Om)^3}^2=&L_1L_2\sum_{n\in\mathbb{Z}^2}
\Big(\norm{\ze_{1n}'-{\rm i}\al_{1n}\ze_{3n}}_{L^{2}([b_2,b_{1}])}^2\\
&+\norm{\ze_{2n}'-{\rm i}\al_{2n}\ze_{3n}}_{L^{2}([b_2,b_{1}])}^2+\norm{\al_{1n}\ze_{2n}-\al_{2n}\ze_{1n}}_{L^{2}([b_2,b_{1}])}^2\Big).\notag
}
From \eqref{chi_om}--\eqref{ecurl},  \eqref{normcurlgamma}, Lemma~\ref{tt}, and \eqref{dps}, we conclude that
\begin{align}\label{xiW1}
&L_1 L_2\bigg|\sum_{n\notin
U_{N_1}}\frac{\kappa_1^2}{\ep_1\mu_1\beta_{1n}}\big(\ze_{1n}(b_1)\bar{\omega}_{
1n}^I+\ze_ {2n}(b_1)\bar{\omega}_{2n}^I\big)\bigg|\notag\\
&\ls \sum_{n\notin U_{N_1}}\sum_{j=1}^2\Big[ |\beta_{1n}|^{-4}\Big(|\beta_{1n}|\big(|\beta_{1n}|+d_1^{-1}\big)\|\ze_{jn}\|_{L^{2}(B_1)}^2+ \|\ze_{jn}'\|_{L^{2}(B_1)}^{2}\Big)\notag\\
&\quad +|\beta_{1n}|^2e^{-2d_1|
\beta_{1n}|}(1+|\alpha_{n}|^2)^{-1/2}|W_{jn}(b_1')|^2\Big].\notag\\
&\ls \sum_{n\notin U_{N_1}}\sum_{j=1}^2\Big[ |\al_n|^{-4}\Big( |\al_n|\big(|\al_n|+d_1^{-1}\big)\|\ze_{jn}\|_{L^{2}(B_1)}^2+\|\ze_{jn}'-{\rm i}\al_{jn}\ze_{3n}\|_{L^{2}(B_1)}^{2}\notag\\
&\quad+|\al_{jn}|^2\|\ze_{3n}\|_{L^{2}(B_1)}^2\Big)\Big]+|\beta_{1n}|^2e^{
-2d_1|\beta_{1n}|}\|\bm{W}\|^2_{TH_{\text{qper}}^{-1/2}(\text{curl},\Gamma_1')}
\notag\\
&\ls N_1^{-2}\Big(\big(1+(N_1d_1)^{-1}\big)\norm{\ze}_{H({\rm curl},\tilde\Om_1)}^2+|\beta_{1n}|^4e^{-2d_1| \beta_{1n}|}\|\bm{W}\|^2_{H(\text{curl},\Om)}\Big)\notag\\
&\ls N_1^{-2}\big(1+d_1^{-4}\big)\norm{\ze}_{H({\rm curl},\Om)}^2,
\end{align}
where we have used $\max_{s\ge 0}(s^4e^{-2d_1s})\ls d_1^{-4}$ to derive the last
inequality.}

Following from \eqref{ewjnII}, ${\rm div}\bm{\ze}=0$, and \eqref{beta1n},
we conclude that
\begin{align}\label{xiW2}
&L_1 L_2\bigg|\sum_{n\notin
U_{N_1}}\frac{\kappa_1^2}{\ep_1\mu_1\beta_{1n}}\big(\ze_{1n}(b_1)\bar{\omega}_{
1n}^{II}+\ze_ {2n}(b_1)\bar{\omega}_{2n}^{II}\big)\bigg|\notag\\
=&L_1 L_2\bigg|\sum_{n\notin
U_{N_1}}\big(-{\rm i}\al_{1n}\ze_{1n}(b_1)-{\rm i}\al_{2n}\ze_{2n}(b_1)\big)\bar\ze_{3n}(b_1) \frac{1-e^{-2d_1|\beta_{1n}|}}{2\ep_1| \beta_{1n}|^2}\bigg|\notag\\
=&L_1 L_2\bigg|\sum_{n\notin
U_{N_1}}\ze_{3n}'(b_1)\bar\ze_{3n}(b_1) \frac{1-e^{-2d_1|\beta_{1n}|}}{2\ep_1| \beta_{1n}|^2}\bigg|\notag\\
=&\bigg|\sum_{n\notin
U_{N_1}}\frac{1-e^{-2d_1|\beta_{1n}|}}{2| \beta_{1n}|^2}\ep_1^{-1}\int_{\Ga_1}\ze_{3n}'\bar\ze_{3n}\bigg|\notag\\
\le&\frac{(1+\de)^2}{2}\sum_{n\notin
U_{N_1}}|\al_n|^{-2}\bigg|\ep_1^{-1}\int_{\Ga_1}\ze_{3n}'\bar\ze_{3n}\bigg|.
\end{align}

Denote by $V_n:=\alpha_{1n}W_{2n}-\alpha_{2n}W_{1n}$ and
$\tau_n:=\alpha_{1n}\ze_{2n}(x_3)-\alpha_{2n}\ze_{1n}(x_3)$. From \eqref{cdp},
we have
\begin{equation}\label{cdp2}
 \begin{cases}
 &V_n^{''}(x_3)-|\beta_{1n}|^2 V_n(x_3)=-\mu_1\tau_n,\\
&V_n(b_1')=V_n(b_1'),\\
&V_n^{'}(b_1)+|\beta_{1n}| V_n(b_1)=0.
\end{cases}
\end{equation}
Similarly, we may obtain the solution of \eqref{cdp2}
\begin{align*}
|V_n(b_{1})|
\leq \frac{\mu_1}{2|\beta_{1n}|^{2}}\|\tau_{n}\|_{L^{\infty}(B_1)}+e^{-d|
\beta_{1n}|}|V_n(b_1')|,
\end{align*}
which implies by combining with \eqref{beta1n}, \eqref{zetajn}, \eqref{ecurl},
{\eqref{normcurlgamma}, Lemma~\ref{tt}, and \eqref{dps} that
\begin{align}\label{xiW3}
&L_1 L_2\sum_{n\notin
U_{N_1}}\bigg|\frac{\big(\alpha_{1n}\ze_{2n}(b_1)-\alpha_{2n}\ze_{1n}(b_1)\big)}{\ep_1\mu_1\beta_{1n}}\big(\alpha_{1n}
\bar{W}_{2n}(b_1)-\alpha_{2n}\bar{W}_{1n}(b_1)\big)\Big]\bigg|\notag\\
&=L_1 L_2\bigg|\sum_{n\notin
U_{N_1}}\frac{1}{\ep_1\mu_1\beta_{1n}}\tau_{n}(b_1)\cdot\bar V_n(b_1)|\bigg|\notag\\
&\ls\sum_{n\notin U_{N_1}}\Big(|\beta_{1n}|^{-3}\norm{\tau_n}_{L^{\infty}(B_1)}^{2}+|\beta_{1n}|e^{-2d|\beta_{1n}|}|V_n(b_1')|^2\Big)\notag\\
&\ls \sum_{n\notin U_{N_1}}\Big[ |\beta_{1n}|^{-4}\Big(|\beta_{1n}|\big(|\beta_{1n}|+d_1^{-1}\big)\|\tau_n\|_{L^{2}(B_1)}^2+ \|\tau_n'\|_{L^{2}(B_1)}^{2}\Big)\notag\\
&\hskip 60pt+|\beta_{1n}|^2e^{-2d_1| \beta_{1n}|}(1+|\alpha_{n}|^2)^{-1/2}|V_n(b_1')|^2\Big].\notag\db\\
&\ls \sum_{n\notin U_{N_1}}\Big[|\al_n|^{-4}\Big(|\al_n|\big(|\al_n|+d_1^{-1}\big)\|\tau_n\|_{L^{2}(B_1)}^2\notag\\
&\hskip 60pt+ \|\alpha_{1n}(\ze_{2n}'-{\rm i}\al_{2n}\ze_{3n})-\alpha_{2n}(\ze_{1n}'-{\rm i}\al_{1n}\ze_{3n})\|_{L^{2}(B_1)}^{2}\Big)\notag\\
&\hskip 60pt+|\beta_{1n}|^2e^{-2d_1| \beta_{1n}|}\|\bm{W}\|^2_{TH_{\text{qper}}^{-1/2}(\text{curl},\Gamma_1')}\Big]\notag\db\\
&\ls N_1^{-2}\Big(\big(1+(N_1d_1)^{-1}\big)\norm{\ze}_{H({\rm curl},\tilde\Om_1)}^2+|\beta_{1n}|^4e^{-2d_1| \beta_{1n}|}\|\bm{W}\|^2_{H(\text{curl},\Om)}\Big)\notag\db\\
&\ls N_1^{-2}\big(1+d_1^{-4}\big)\norm{\ze}_{H({\rm curl},\Om)}^2.
\end{align}}

Plugging \eqref{xiW1}, \eqref{xiW2}, and \eqref{xiW3} into
\eqref{xiW}, we arrive at \eqref{WB}. This completes the proof of the lemma.
\end{proof}

\subsection{Proof of Theorem \ref{mt}}

Let $N_j\geq\max(N_{j1},N_{j2})$, $j=1,2$. First, it follows from the error representation formula \eqref{xi-curl}, Lemma~\ref{att}, Lemma~\ref{imt}, and Lemma~\ref{tr} that
\begin{align*}
  \|\bm{\xi}\|_{H({\rm curl},\Omega)}^2
  \leq& C\Bigg(\bigg(\sum_{T\in\mathcal{M}_h}\eta_T^2\bigg)^{1/2}
  +\sum_{j=1}^2e^{-d_j\si_j}\|\bm{E}^{\rm inc}\|_{TL^2(\Gamma_1)}\Bigg)
  \|\bm{\xi}\|_{H({\rm curl},\Omega)}\\
  &+\de\|\nabla\times\bm{\xi}\|^2_{L^2(\Omega)^3}+C(\de)\|\bm{\xi}\|^2_{
L^2(\Omega)^3}+C\|\bm{\xi}\|_{L^2(\Omega)^3}^2.
\end{align*}
which gives after taking $\de=1/2$  that
\begin{align}\label{t1}
  \|\bm{\xi}\|_{H({\rm curl},\Omega)}^2
  \ls& \Bigg(\bigg(\sum_{T\in\mathcal{M}_h}\eta_T^2\bigg)^{1/2}
  +\sum_{j=1}^2e^{-d_j\si_j}\|\bm{E}^{\rm inc}\|_{TL^2(\Gamma_1)}\Bigg)^2+\|\bm{\xi}\|_{L^2(\Omega)^3}^2.
\end{align}
Using \eqref{xi-L2}, Lemma~\ref{att}, \eqref{zeta}, \eqref{dps}, and Lemma \ref{lem
TT}, we obtain
\begin{align*}
| (\ep\bm{\xi},\bm{\xi})|\leq&
C\Bigg(\bigg(\sum_{T\in\mathcal{M}_h}\eta_T^2\bigg)^{1/2}
  +\sum_{j=1}^2e^{-d_j\si_j}\|\bm{E}^{\rm inc}\|_{TL^2(\Gamma_1)}\Bigg)
  \|\bm{\xi}\|_{L^2{(\Omega)}^3}\\
  &+\frac23\abs{(\ep\bm{\xi},\bm{\xi})}+ C \sum_{j=1}^2N_j^{-2}\big(1+d_j^{-4}\big) \|\bm{\xi}\|_{H({\rm curl},\Omega)}^2,
\end{align*}
which implies that
\begin{align*}
\|\bm{\xi}\|_{L^2(\Omega)^3}^2\ls
\Bigg(\bigg(\sum_{T\in\mathcal{M}_h}\eta_T^2\bigg)^{1/2}
  +\sum_{j=1}^2e^{-d_j\si_j}\|\bm{E}^{\rm inc}\|_{TL^2(\Gamma_1)}\Bigg)^2\\
 + C \sum_{j=1}^2N_j^{-2}\big(1+d_j^{-4}\big) \|\bm{\xi}\|_{H({\rm
curl},\Omega)}^2
\end{align*}
The proof is completed by combining the above estimate and \eqref{t1}.

\section{Numerical experiments}\label{Numerical exp}

In this section, we report two examples to demonstrate the competitiveness of
our method. The implementation of the adaptive algorithm is based on parallel
hierarchical grid (PHG) \cite{phg},
which is a toolbox for developing parallel adaptive finite element programs on unstructured
tetrahedral meshes.  The first-order N\'{e}d\'{e}lec's edge element is used in the
numerical tests. The linear system resulted from finite element discretization is
solved by the MUMPS direct solver, which is a general purpose
library for the direct solution of large, sparse systems of linear
equations.
The adaptive FEM algorithm is summarized in Table \ref{alg}.

\begin{table}[ht]
\caption{The adaptive FEM-DtN algorithm.}
\hrulefill
\begin{tabular}{ll}
 1 & Given a tolerance $\epsilon > 0$ and mesh refinement threshold $\tau\in
(0,1)$;\\
 2 & Choose $d_j$ and $\sigma_j$ defined in Theorem \ref{mt} such that $e^{-d_j\sigma_j}< 10^{-8}$;\\
 3 & Construct an initial tetrahedral partition $\mathcal{M}_h$ over $\Omega$ and
compute error estimators;\\
 4 &  While $\epsilon_h>\epsilon$ do\\
 5 & \qquad choose  $\hat{\mathcal{M}}_h\subset\mathcal{M}_h$ according to the
strategy $\eta_{\hat{\mathcal{M}}_h}>\tau\eta_{\mathcal{M}_h}$;\\
 6 & \qquad refine all the elements in $\hat{\mathcal{M}}_h$ and obtain a new
mesh denoted still by $\mathcal{M}_h$;\\
 7 & \qquad solve the discrete problem \eqref{fem} on the new mesh
$\mathcal{M}_h$;\\
 8 & \qquad compute the corresponding error estimators;\\
 9 & End while.
\end{tabular}
\hrulefill
\label{alg}
\end{table}

In the experiments, let $\lambda$, $\theta_1$, $\theta_2$, and
$p=(p_1,p_2,p_3)^\top$ denote the wavelength,
the incident angles, and the polarization of the incident wave, respectively, and let
$n$ denote the refractive index. The examples are computed by both the adaptive DtN algorithm and the adaptive PML
method in \cite{BLW10}.

{\em Example 1}. We consider the simplest biperiodic structure, a plat plane,
where the exact solution is available. We assume that a plane wave $\boldsymbol E^{\rm inc}=\boldsymbol q e^{{\rm
i}(\alpha_1 x_1+\alpha_2 x_2-\beta x_3)}$ is incident on the
plat plane $\{x_3=0\}$, which separates two homogeneous media: $n_1=1$ and $n_2=1.5$. In this example, the parameters are chosen as $\lambda=1 \mu m$, $\theta_1=\pi/6$, $\theta_2=\pi/6$, $p=(-\alpha_2, \alpha_1, 0)^T$. The computational domain
$\Omega=(0,0.5)\times(0,0.5)\times(-0.3, 0.3)$.
The exact solution is as follows:
\begin{align*}
\boldsymbol E=\left\{
\begin{array}{ll}
\boldsymbol p e^{{\rm i}(\alpha_1 x_1+\alpha_2 x_2-\beta_1 x_3)}+ r \boldsymbol p e^{{\rm i}(\alpha_1 x_1+\alpha_2 x_2+\beta_1 x_3)}  &\hbox{ if } x_3\ge 0,\\
t\boldsymbol p e^{{\rm i}(\alpha_1 x_1+\alpha_2 x_2-\beta_2 x_3)}                        &\hbox{ if } x_3 < 0,
\end{array}
\right.
\end{align*}
where $r=(\beta_1-\beta_2)/(\beta_1+\beta_2)$, $t=2\beta_1/(\beta_1+\beta_2)$.

 The mesh and surface plots of the amplitude of the total field
$\boldsymbol E_h^N$ are shown in Figure \ref{ex1:mesh}. The mesh has
446600 tetrahedrons and the total number of degrees of freedom (DoFs) on the mesh
is 1053600. We also present the mesh and surface plots of the amplitude of the
total field $\boldsymbol E_h^N$ obtained by the adaptive PML method in Figure
\ref{ex1pml:mesh}.
Note that the total field $\boldsymbol E_h^N$ is solved, the amplitude in the upper PML is large because of the incident field.
Figure \ref{ex1:err} shows the curves of $\log N_k$ versus $\log
\|\bm{E}-\bm{E}_h^N\|_{H({\rm curl},\Omega)}$,  and the a posteriori error
estimates $\eta_h$, where $N_k$ is the total number of DoFs of the mesh.
It indicates that the meshes and the associated numerical complexity are
quasi-optimal: $\|\bm{E}-\bm{E}_h^N\|_{H({\rm curl},\Omega)}=O(N^{-1/3}_k)$ are valid asymptotically.

\begin{figure}
\centering
\includegraphics[width=0.8\textwidth]{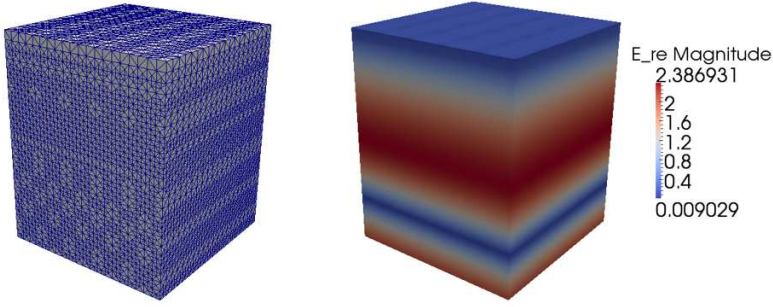}
\caption{The adaptive DtN method: The mesh plot and the surface plot of the
amplitude of the field $\boldsymbol E_h$ after 9 adaptive iterations for
Example 1.}\label{ex1:mesh}
\end{figure}

\begin{figure}
\centering
\includegraphics[width=0.8\textwidth]{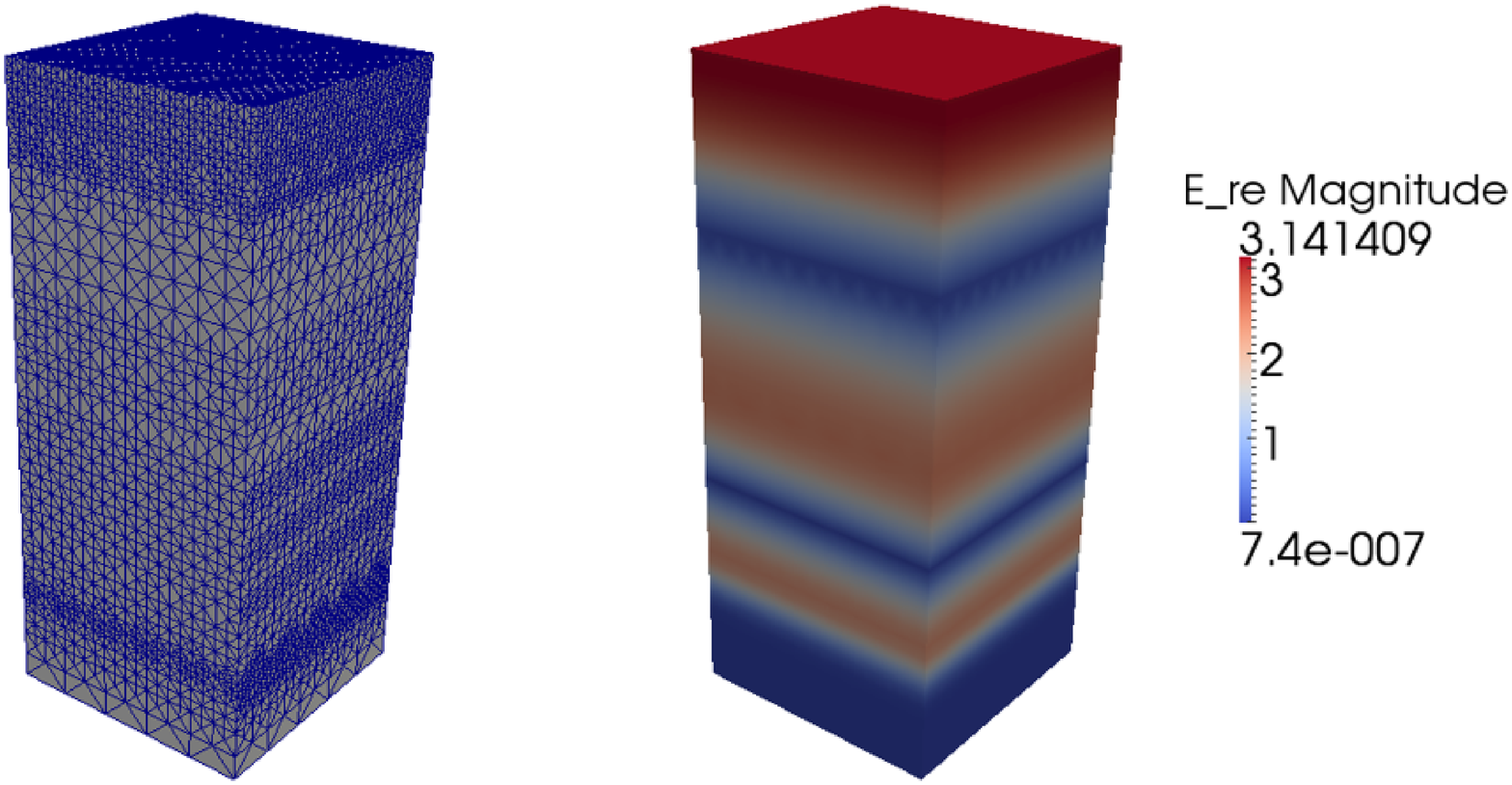}
\caption{The adaptive PML method: The mesh plot and the surface plot of the
amplitude of the field $\boldsymbol E_h$ after 11 adaptive iterations for
Example 1.}\label{ex1pml:mesh}
\end{figure}

\begin{figure}
\centering
\includegraphics[width=0.45\textwidth]{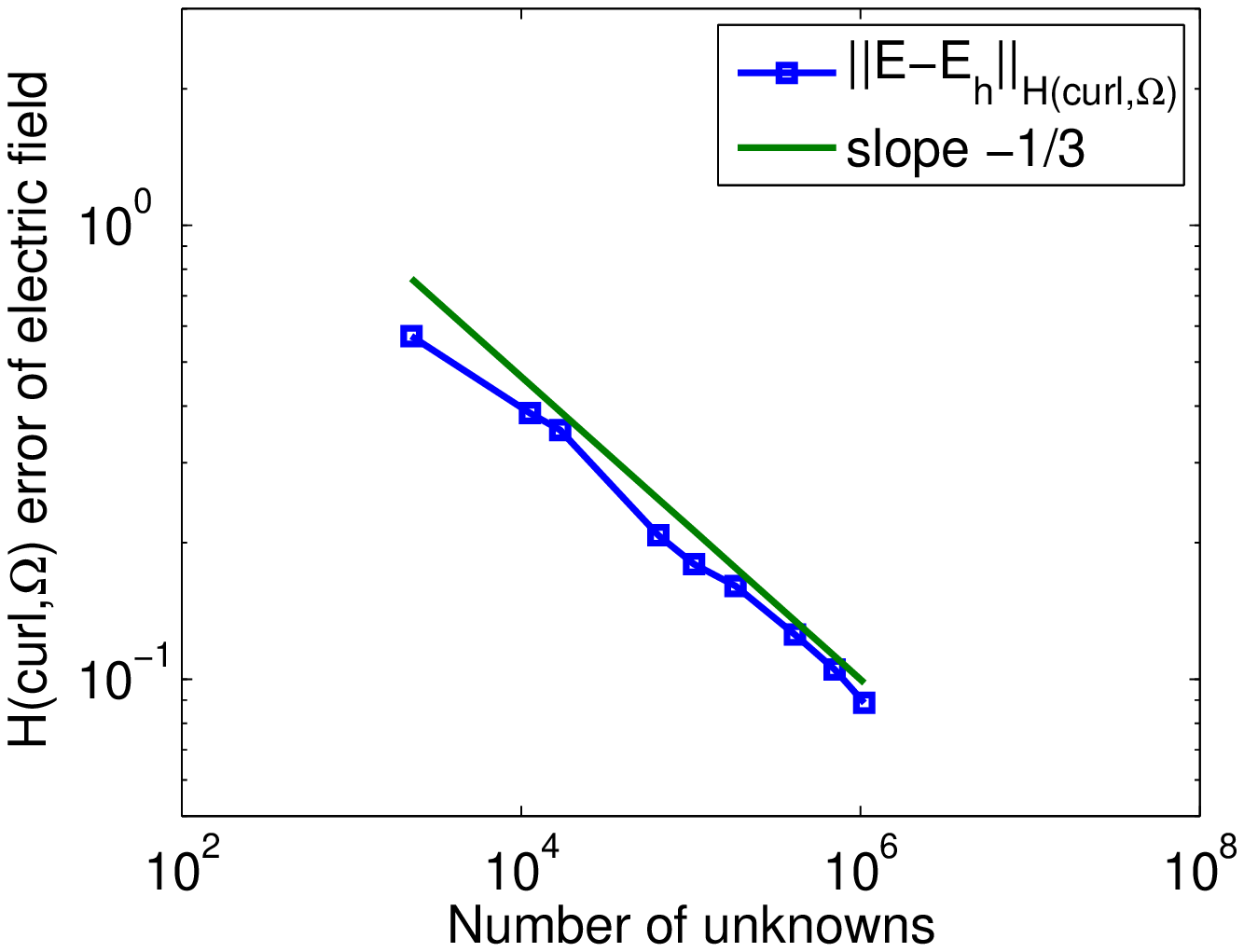}
\includegraphics[width=0.45\textwidth]{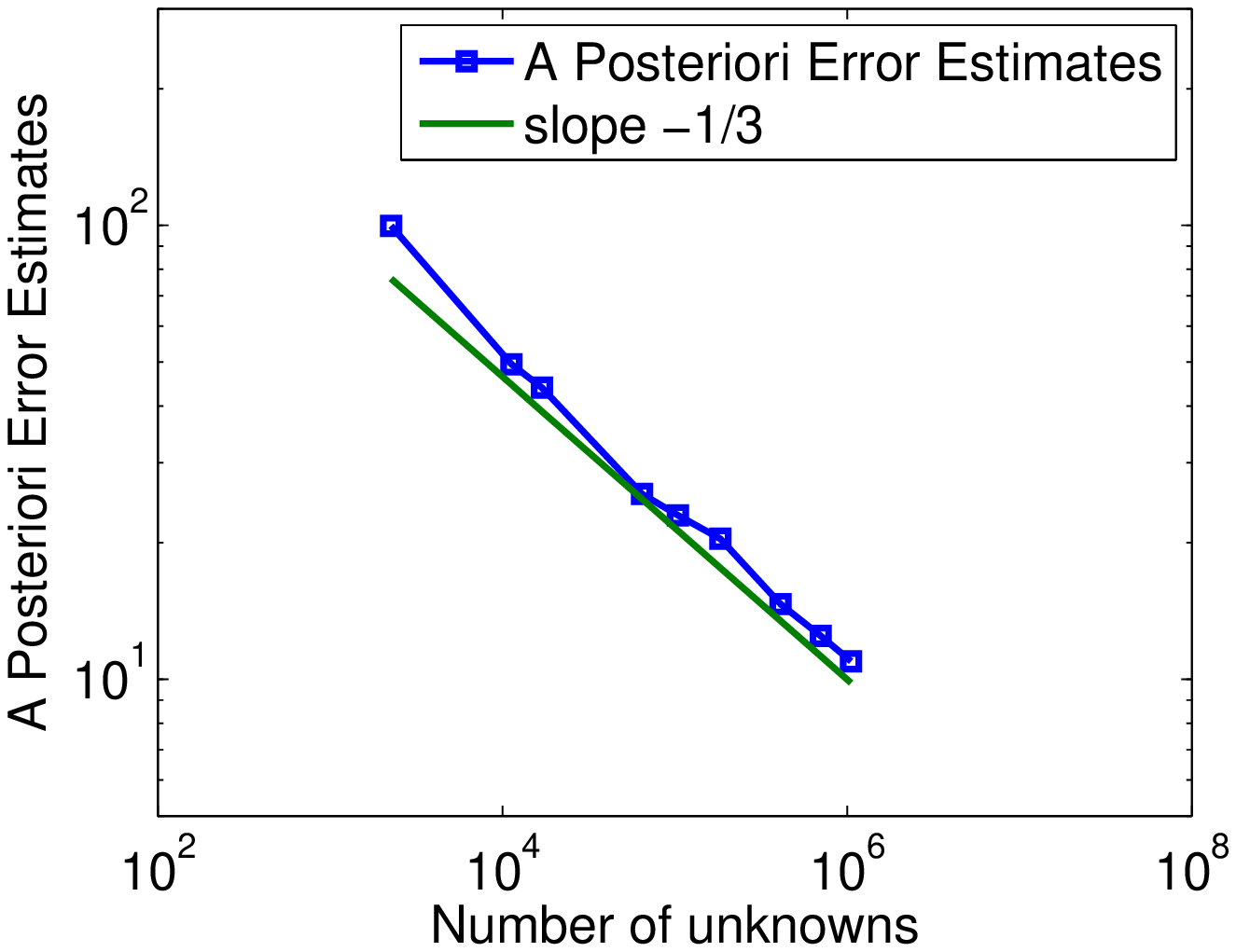}
\caption{The adaptive DtN method: Quasi-optimality of
$\|\bm{E}-\bm{E}_h^N\|_{H({\rm curl},\Omega)}$  (left),
and the a posteriori error estimates (right) for Example 1.}\label{ex1:err}
\end{figure}

\begin{figure}
\centering
\includegraphics[width=0.45\textwidth]{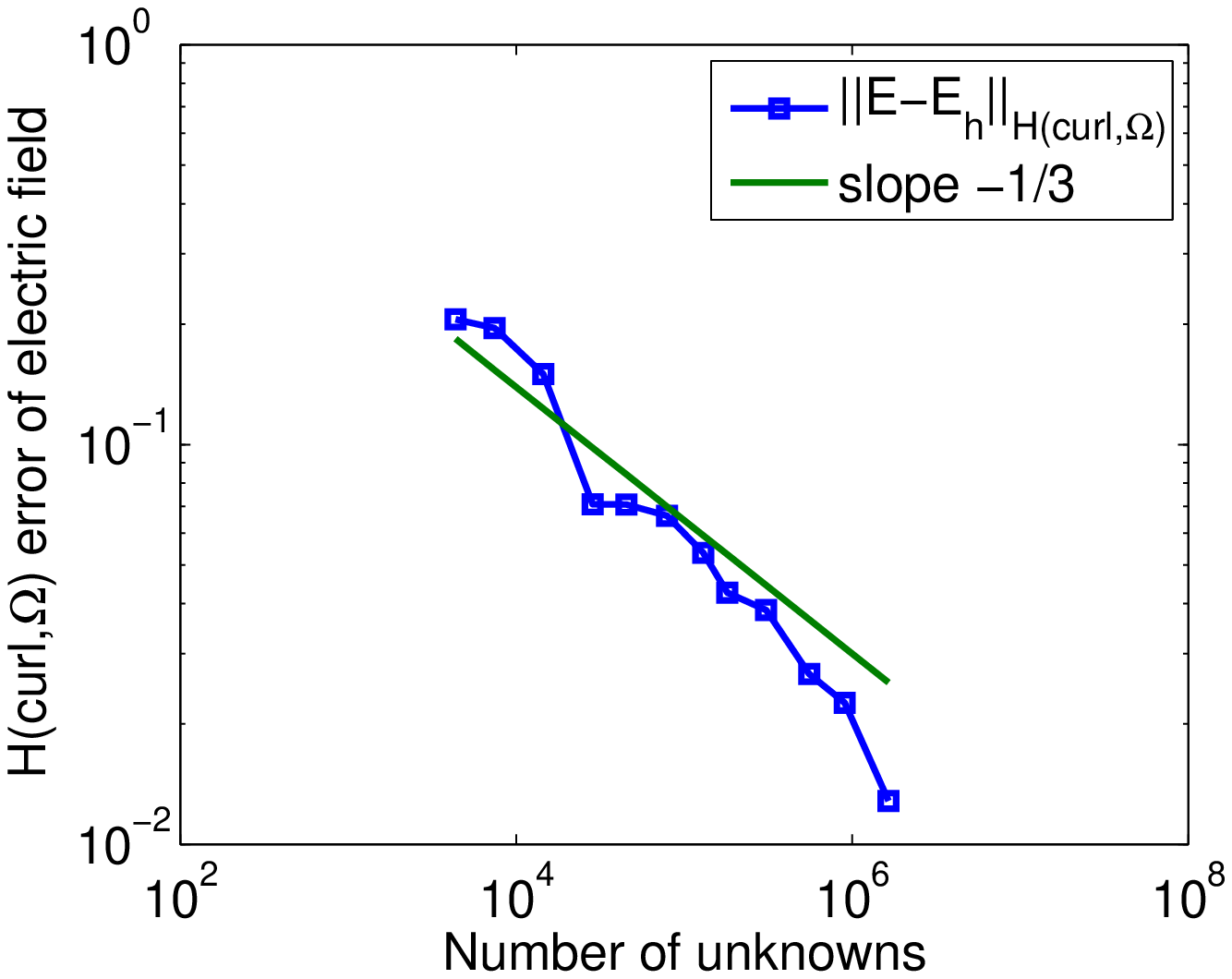}
\includegraphics[width=0.45\textwidth]{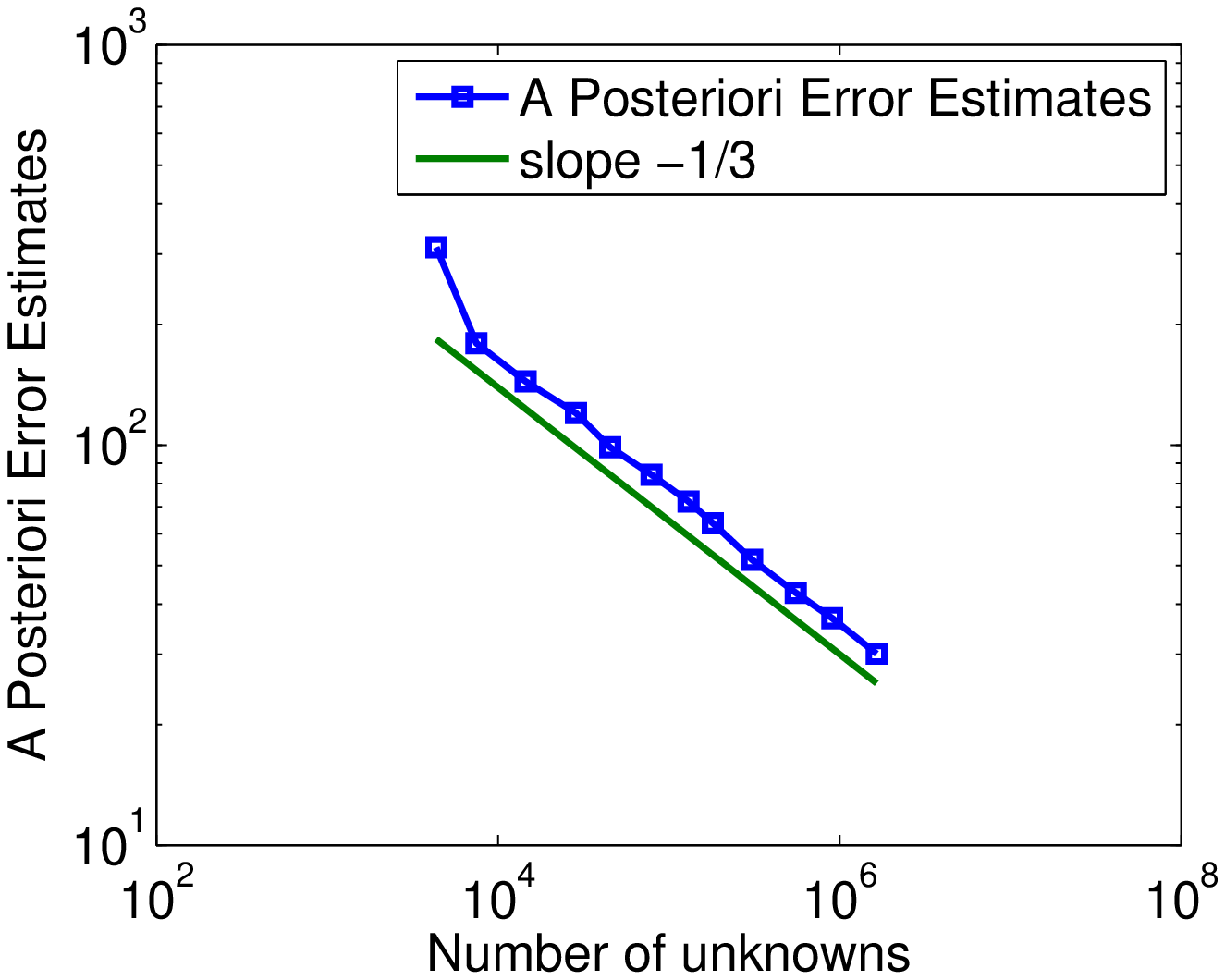}
\caption{The adaptive PML method: Quasi-optimality of
$\|\bm{E}-\bm{E}_h^N\|_{H({\rm curl},\Omega)}$  (left),
and the a posteriori error estimates (right) for Example 1.}\label{ex1pml:err}
\end{figure}

{\em Example 2}. This example concerns the scattering of the time-harmonic
plane wave $\boldsymbol{E}^{\rm inc}$ on the checkerboard grating \cite{L97}, as seen in Figure \ref{ex2:geo}.
The parameters are chosen as $\lambda=1\mu m$, $\theta_1=\theta_2=0$,
$p=(1,1,(\alpha_1+\alpha_2)/\beta)^\top$. The
computational domain is $\Omega=(0,1.25\sqrt{2})\times(0,1.25\sqrt{2})\times(-2,2)$.
Figure \ref{ex2:mesh} shows the mesh and the amplitude of the associated solution for the total field
$\boldsymbol E_h^N$ when the mesh has 1002488 DoFs.
Figure \ref{ex2:est} shows the curves of $\log N_k$ versus the a posteriori error estimates $\eta_h$, and
$\eta_h=O(N^{-1/3}_k)$ is valid asymptotically.

\begin{figure}
\centering
\includegraphics[width=0.36\textwidth]{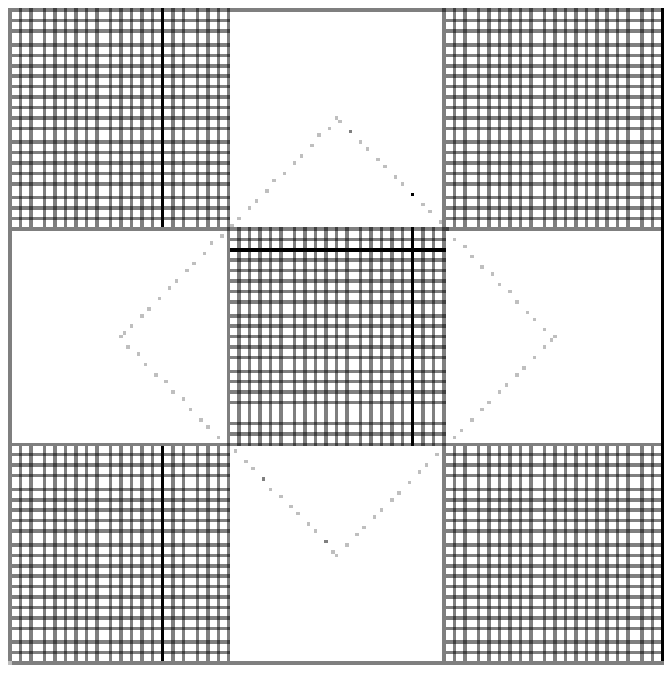}
\hskip 1.cm
\includegraphics[width=0.45\textwidth]{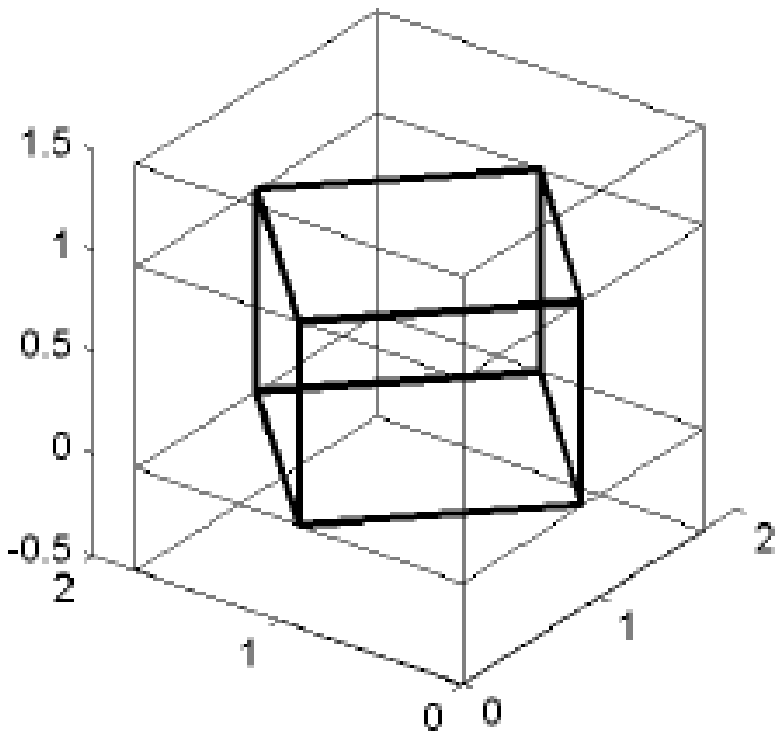}
\caption{A top view of the grating along with the unit cell (left),
and the computational domain (right) for Example 2.}\label{ex2:geo}
\end{figure}

\begin{figure}
\centering
\includegraphics[width=0.8\textwidth]{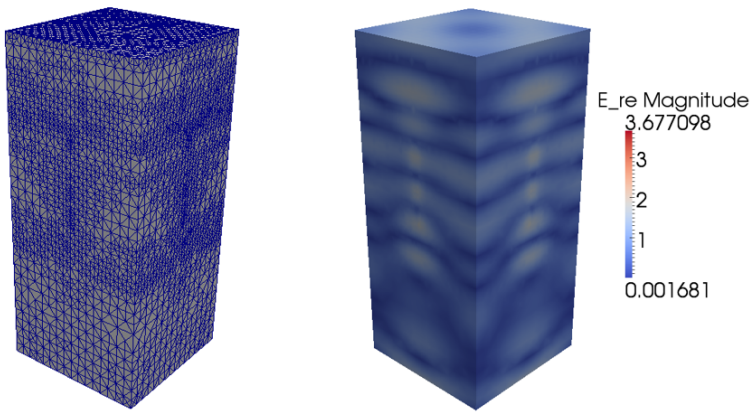}
\caption{The mesh plot and the surface plot of the amplitude
of the field $\boldsymbol E_h$ after 11 adaptive iterations for Example
2.}\label{ex2:mesh}
\end{figure}

\begin{figure}
\centering
\includegraphics[width=0.45\textwidth]{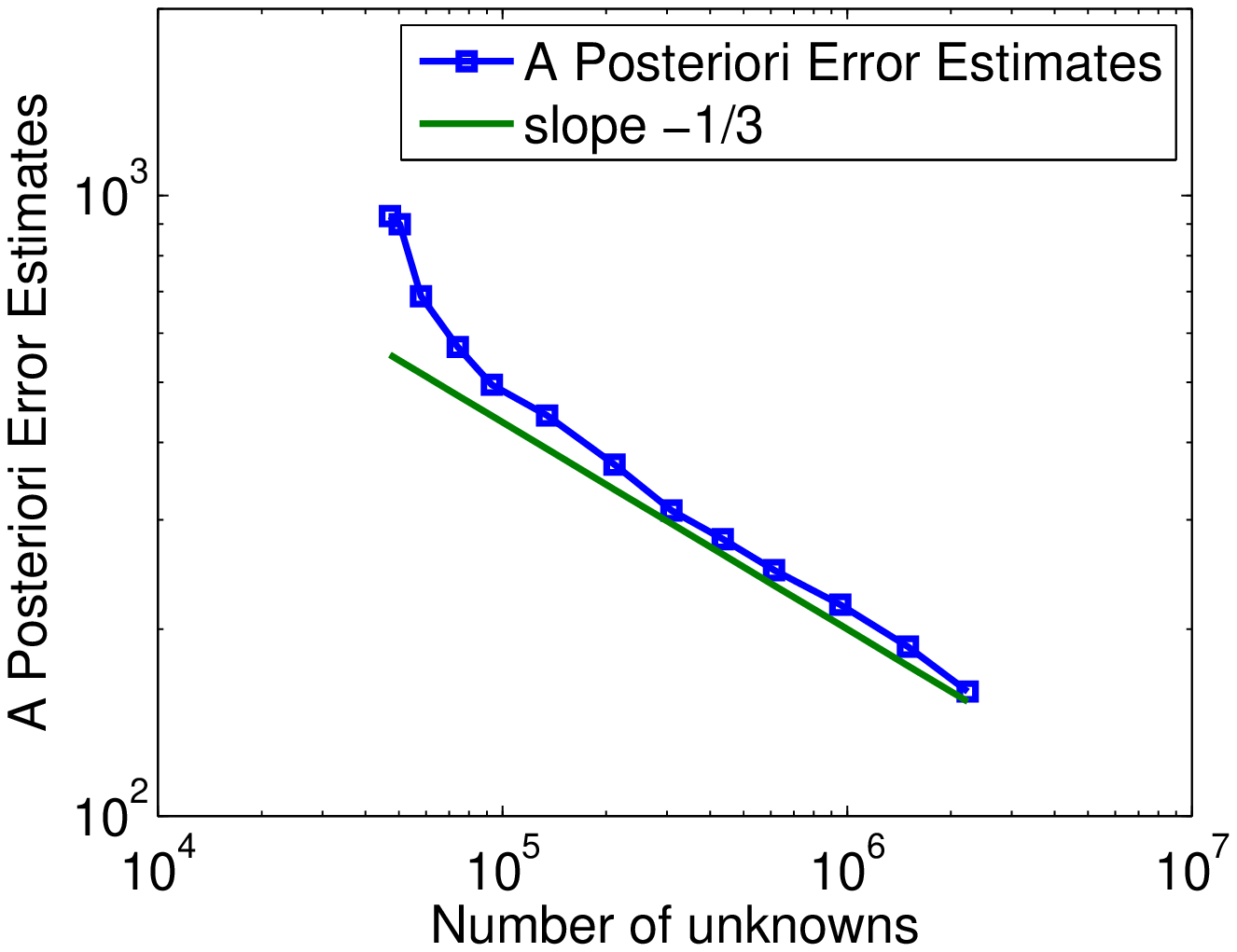}
\includegraphics[width=0.45\textwidth]{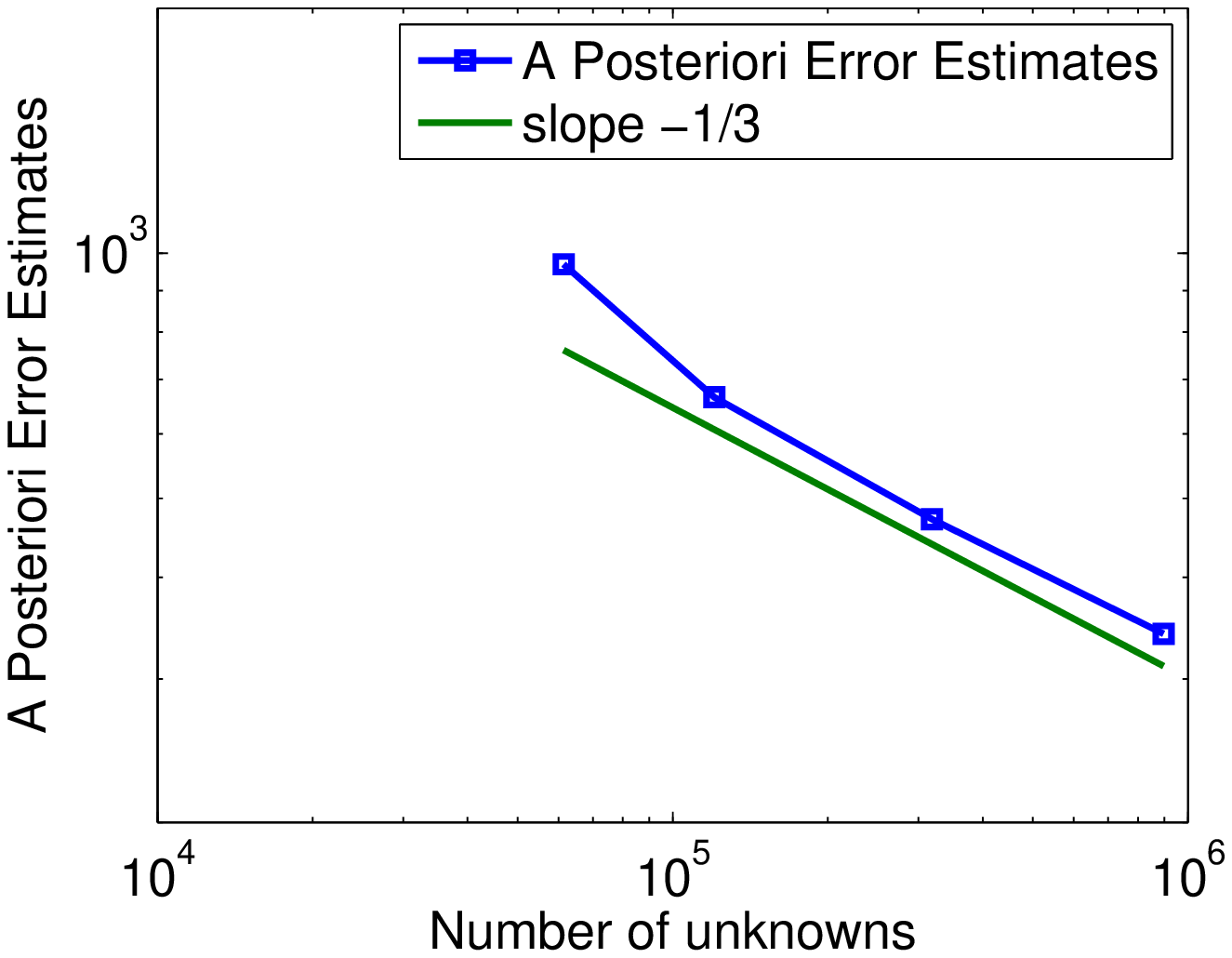}
\caption{Quasi-optimality of the a posteriori error estimates for Example 2.
Left: The adaptive DtN method, right: The adaptive PML method.}\label{ex2:est}
\end{figure}

\section{Concluding remarks}

In this paper, we have presented a new adaptive finite
element method with DtN boundary condition for the diffraction
problem in a biperiodic structure. The a posteriori error estimate takes into
account of the finite element discretization error and the DtN truncation
error, and is used to design the adaptive method to determine the DtN
truncation parameter and choose element for refinements. Numerical results show
that the proposed method is competitive with the adaptive PML method. This work
provides a viable alternative to the adaptive finite element method with PML for
solving the same problem and enriches the range of choices available for solving
many other wave propagation problems. We hope that the method can be applied to
other scientific areas where the problems are proposed in unbounded domains,
especially in the areas where the PML technique might not be applicable.


\begin{thebibliography}{00}

\bibitem{BA73}
{\sc I. Babu\v{s}ka and A. Aziz}, {\em Survey Lectures on Mathematical
Foundations of the Finite Element Method}, in The Mathematical Foundations of
the Finite Element Method with Application to the Partial Differential
Equations, ed. by A. Aziz, Academic Press, New York, 1973, pp. 5--359.

\bibitem{BT80}
{\sc A. Bayliss and E. Turkel}, {\em Radiation boundary conditions for numerical
simulation of waves}, Comm. Pure Appl. Math., 33 (1980), pp. 707--725.

\bibitem{B95}
{\sc G. Bao}, {\em Finite element approximation of time harmonic waves in
periodic structures}, SIAM J. Numer. Anal., 32 (1995), pp. 1155--1169.

\bibitem{B97}
{\sc G. Bao}, {\em Variational approximation of Maxwell's equations in
biperiodic structures}, SIAM J. Appl. Math., 57 (1997), pp. 364--381.

\bibitem{BCW05}
{\sc G. Bao, Z. Chen, and H. Wu}, {\em Adaptive finite element method for
diffraction gratings}, J. Opt. Soc. Amer. A, 22 (2005), pp. 1106--1114.

\bibitem{BCM01}
{\sc G. Bao, L. Cowsar, and W. Masters}, {\em Mathematical Modeling in Optical
Sciences}, Frontiers Appl. Math., vol. 22, SIAM, Philadelphia, 2001.

\bibitem{BCL14}
{\sc G. Bao, T. Cui, and P. Li}, {\em Inverse diffraction grating of
Maxwell's equations in biperiodic structures}, Opt. Express, 22 (2014),
pp. 4799--4816.

\bibitem{BDC95}
{\sc G. Bao, D. C. Dobson, and J. A. Cox}, {\em Mathematical studies in
rigorous grating theory}, J. Opt. Soc. Amer. A, 12 (1995), pp. 1029--1042.

\bibitem{BLW10}
{\sc G. Bao, P. Li, and H. Wu}, {\em An adaptive edge element method with
perfectly matched absorbing layers for wave scattering by periodic structures},
Math. Comp., 79 (2010), pp. 1--34.

\bibitem{BW05}
{\sc G. Bao and H. Wu}, {\em Convergence analysis of the perfectly matched layer
problems for time-harmonic Maxwell's equations}, SIAM J. Numer. Anal., 43
(2005), pp. 2121--2143.

\bibitem{B94}
{\sc J.-P. Berenger}, {\em A perfectly matched layer for the absorption of
electromagnetic waves}, J. Comput. Phys., 114 (1994), pp. 185--200.

\bibitem{BP08}
{\sc J. Bramble and J. Pasciak}, {\em Analysis of a finite elment PML
approximation for the three dimensional time-harmonic Maxwell problem}, Math.
Comp., 77 (2008), pp. 1--10.

\bibitem{BR93a}
{\sc O. Bruno and F. Reitich}, {\em Numerical solution of diffraction problems:
a method of variation of boundaries}, J. Opt. Soc. Am. A, 10 (1993), pp.
1168--1175.

\bibitem{BR93b}
{\sc O. Bruno and F. Reitich}, {\em Numerical solution of diffraction problems:
a method of variation of boundaries. III. Doubly periodic gratings}, J. Opt.
Soc. Am. A, 10 (1993), pp. 2551--2562.



\bibitem{CC08}
{\sc Z. Chen and J. Chen}, {\em An adaptive perfectly matched layer technique
for 3-D time-harmonic electromagnetic scattering problems}, Math. Comp., 77
(2008), pp. 673--698.

\bibitem{CL05}
{\sc Z. Chen and X. Liu}, {\em An adaptive perfectly matched layer technique for
time-harmonic scattering problems}, SIAM J. Numer. Anal., 43 (2005),
pp. 645--671.

\bibitem{CW03}
{\sc Z. Chen and H. Wu}, {\em An adaptive finite element method with perfectly
matched absorbing layers for the wave scattering by periodic structures}, SIAM
J. Numer. Anal., 41 (2003), pp. 799--826.

\bibitem{CM98}
{\sc F. Collino and P. Monk}, {\em The perfectly matched layer in curvilinear
coordinates}, SIAM J. Sci. Comput., 19 (1998), pp. 2061--2090.

\bibitem{CK83}
{\sc D. Colton and R. Kress}, {\em Integral Equation Methods in Scattering
Theory}, John Wiley $\&$ Sons, New York, 1983.

\bibitem{CK98}
{\sc D. Colton and R. Kress}, {\em Inverse Acoustic and Electromagnetic
Scattering Theory}, Second Edition, Springer, Berlin, New York, 1998.

\bibitem{DF92}
{\sc D. C. Dobson and A. Friedman}, {\em The time-harmonic Maxwell equations in
a doubly periodic structure}, J. Math. Anal. Appl., 166 (1992), 507--528.

\bibitem{EM77}
{\sc B. Engquist and A. Majda}, {\em Absorbing boundary conditions for the
numerical simulation of waves}, Math. Comp., 31 (1977), pp. 629--651.

\bibitem{GK95}
{\sc M. Grote and J. Keller}, {\em On nonreflecting boundary conditions}, J.
Comput. Phys., 122 (1995), pp. 231--243.

\bibitem{GK04}
{\sc M. Grote and C. Kirsch}, {\em Dirichlet-to-Neumann boundary conditions for
multiple scattering problems}, J. Comput. Phys., 201 (2004), pp. 630--650.

\bibitem{H99}
{\sc T. Hagstrom}, {\em Radiation boundary conditions for the numerical
simulation of waves}, Acta Numerica (1999), pp. 47--106.

\bibitem{HNS12}
{\sc Y. He, D. P. Nicholls, and J. Shen}, {\em An efficient and stable spectral
method for electromagnetic scattering from a layered periodic struture}, J.
Comput. Phys., 231 (2012), pp. 3007--3022.

\bibitem{HNPX11}
{\sc G. C. Hsiao, N. Nigam, J. E. Pasciak, L. Xu}, {\em Error analysis of the
DtN-FEM for the scattering problem in acoustics via Fourier analysis}, J.
Comput. Appl. Math., 235 (2011), pp. 4949--4965.

\bibitem{JL17}
{\sc X. Jiang, and P. Li}, {\em Inverse electromagnetic diffraction by
biperiodic dielectric gratings}, Inverse Probl., 33 (2017), pp. 085004.

\bibitem{JLLZ17}
{\sc X. Jiang, P. Li, J. Lv, and W. Zheng}, {\em An adaptive finite element method
for the wave scattering with transparent boundary condition}, J. Sci. Comput.,
72 (2017), pp. 936-956.

\bibitem{JLZ13}
{\sc X. Jiang, P. Li, and W. Zheng}, {\em Numerical solution of acoustic
scattering by an adaptive DtN finite element method}, Commun. Comput. Phys., 13
(2013), pp. 1227--1244.

\bibitem{J93}
{\sc J. Jin}, {\em The Finite Element Method in Electromagnetics}, New York:
Wiley, 1993.

\bibitem{L97}
{\sc L. Li}, {\em New formulation of the Fourier modal method for crossed surface-relief gratings}, J.
Opt. Soc. Amer. A 14 (1997), 2758--2767.

\bibitem{LWZ11}
{\sc P. Li, H. Wu, and W. Zheng}, {\em Electromagnetic scattering by unbounded
rough surfaces}, SIAM J. Math. Anal., 43 (2011), 1205--1231.

\bibitem{M03}
{\sc P. Monk}, {\em Finite Element Methods for Maxwell's Equations}, Oxford
University Press, Oxford, UK, 2003.

\bibitem{N01}
{\sc J.-C. N\'{e}d\'{e}lec}, {\em Acoustic and Electromagnetic Equations
Integral Representations for Harmonic Problems}, Springer-Verlag, New
York, 2001.

\bibitem{NS91}
{\sc J.-C. N\'{e}d\'{e}lec and F. Starling}, {\em Integral equation methods in
a quasi-periodic diffraction problem for the time-harmonic Maxwell's
equations}, SIAM J. Math. Anal., 22 (1991), pp. 1679--1701.

\bibitem{P80}
{\sc R. Petit}, {\em Electromagnetic Theory of Gratings}, Topics in Current
Physics, vol. 22, Springer-Verlag, Heidelberg, 1980.

\bibitem{phg}
PHG (Parallel Hierarchical Grid), http://lsec.cc.ac.cn/phg/.

\bibitem{R07}
{\sc P. Rayleigh}, {\em On the dynamical theory of gratings}, R. Soc. London
Ser. A, 79 (1907), pp. 399--416.


\bibitem{TC01}
{\sc F. L. Teixeira and W. C. Chew}, {\em Advances in the theory of perfectly
matched layers}, in Fast and Efficient Algorithms in Computational
Electromagnetics, W. C. Chew et al., eds., Artech House, Boston, 2001,
pp. 283--346.

\bibitem{TY98}
{\sc E. Turkel and A. Yefet}, {\em Absorbing PML boundary layers for wave-like
equations}, Appl. Numer. Math., 27 (1998), pp. 533--557.

\bibitem{WBLLW15}
{\sc Z. Wang, G. Bao, J. Li, P. Li, and H. Wu}, {\em An adaptive finite element
method for the diffraction grating problem with transparent boundary condition},
SIAM J. Numer. Anal., 53 (2015), pp. 1585--1607.

\bibitem{WL09}
{\sc Y. Wu and Y. Y. Lu}, {\em Analyzing diffraction gratings by a boundary
integral equation Neumann-to-Dirichlet map method}, J. Opt. Soc. Am. A, 26
(2009), pp. 2444--2451.


\end{thebibliography}
\end{document}